\declaretheorem[name=Theorem,numberwithin=section]{thm}
\newtheorem{theorem}{Theorem}[section]
\newtheorem{lemma}[theorem]{Lemma}
\newtheorem{proposition}[theorem]{Proposition}
\theoremstyle{definition}
\newtheorem{remark}[theorem]{Remark}
\newtheorem{definition}[theorem]{Definition}
\newtheorem{question}[theorem]{Question}
\newtheorem{example}[theorem]{Example}
\newcommand{\C}{\mathcal{C}}
\newcommand{\T}{\overline{T}}
\newcommand{\Pcal}{\mathcal{P}}
\newcommand{\Gcal}{\mathcal{G}}
\newcommand{\Lcal}{\mathcal{L}}
\newcommand{\mc}[1]{\mathcal{#1}}
\title{Bounds for Kirby-Thompson invariants of knotted surfaces}
\author{Rom\'an Aranda, Puttipong Pongtanapaisan, and Suixin (Cindy) Zhang}
\begin{document}
\pagenumbering{arabic}
\maketitle
\begin{abstract}
We provide sharp lower bounds for two versions of the Kirby-Thompson invariants for knotted surfaces, one of which was originally defined by Blair, Campisi, Taylor, and Tomova. The second version introduced in this paper measures distances in the dual curve complex instead of the pants complex. 
We compute the exact values of both KT-invariants for infinitely many knotted surfaces with bridge number  at most six.

\end{abstract}

\section{Introduction}
Any knot $K$ in $S^3$ can be decomposed into two trivial tangles $(S^3,K) = (B_1,T_1)\cup(B_2,T_2)$. 
Such a decomposition is called a \textbf{bridge splitting} and has been utilized to prove various important results in low-dimensional topology. For instance, Bachman and Schleimer showed in \cite{bachman2005distance} that if the distance in the curve complex between the disk sets of the two trivial tangles is sufficiently large, then the knot is hyperbolic. Instead of using the curve complex, Zupan later defined knot invariants in terms of the distance in the dual curve complex and the pants complex from disk sets of two trivial tangles in a bridge splitting \cite{zupan2013bridge}. Among other things, Zupan show that his integer-valued complexity detects the unknot and can be used to bound the hyperbolic volume of some families of knots.

Moving up one dimension, Meier and Zupan proved that any embedded closed surface $F$ in $S^4$ can be decomposed into three collections of disks \[(S^4,F)=(B^4, \mc D_1)\cup(B^4, \mc D_2)\cup(B^4, \mc D_3)\] 
satisfying certain conditions  
\cite{meier2017bridge}. 
This is called a $(b;c_1,c_2,c_3)$-\textbf{bridge trisection} where $c_i=|\mc D_i|$ and $b=|\mc D_1 \cap \mc D_2|$. Inspired by \cite{kirby2018new,zupan2013bridge}, Blair, Campisi, Taylor, and Tomova introduced the $\Lcal$-invariant for bridge trisections of knotted surfaces in the $S^4$. 
They showed that this number detects 
unlinks of unknotted surfaces.  
Another interesting aspect of $\mathcal{L}$ is that it is, a priori, not extracted from the fundamental group of the knotted surface complement, which may offer new insights. 
The $\Lcal$-invariant of a bridge trisection is roughly a sum of the lengths of certain paths in the pants complex of the surface. We introduce an $\Lcal^*$-invariant, analogous to \cite{zupan2013bridge} one dimension lower, which measures the knotting complexities using the dual curve complex rather than the pants complex. 
This work studies bridge trisections with `small' $\Lcal$- or $\Lcal^*$-invariants. 

Bridge trisections of knotted surfaces with $b\leq 3$ have been classified 
in \cite{meier2017bridge}. 
A bridge trisection is called \textbf{completely decomposable} if it is a disjoint union of perturbations of one-bridge and two-bridge trisections. In particular, the underlying surface of a completely decomposable bridge trisection is an unlink of unknotted surfaces. It was shown in \cite{meier2017bridge} that $(b;b,c_2,c_3)$-bridge trisections are completely decomposable. The same was proven for $(b;b-1,c_2,c_3)$-bridge trisections by Joseph, Meier, Miller, and Zupan in \cite{joseph2021bridge}. A result of \cite{blair2020kirby} states that if a knotted surface has $\Lcal$-invariant equal to zero, then the bridge trisection is completely decomposable. 
One of the main theorems of this work is an improved version of this. 
Theorem \ref{thm_unlinkdetector} was proven first by Ogawa in \cite{ogawa2021trisections} for the $\Lcal$-invariant of a closed 4-manifolds. 
\newtheorem*{thm:ogawa}{Theorem \ref{thm_unlinkdetector}}
\begin{thm:ogawa}
Let $\mc{T}$ be a bridge trisection with $\Lcal^*(\mc{T})\leq 2$. Then $\mc{T}$ is a distant sum of connected sums of stabilizations of one-bridge and two-bridge trisections. In particular 
the underlying surface-link is an unlink unknotted 2-spheres and unknotted nonorientable surfaces.
\end{thm:ogawa}

The first lower bounds for $\Lcal$-invariants were given in \cite{blair2020kirby} in terms of the Euler characteristic of the surface. 
Recently, the authors of this work, along with Scott Taylor, gave sharp lower bounds for $\Lcal$-invariants of  $(4;2)$-bridge trisections \cite{aranda2021bounding}. This work further explores the combinatorics of $(b;c)$-bridge trisections to give sharp lower bounds for $\Lcal$ and $\Lcal^*$. We summarize them below. 
\begin{thm}\label{thm:summary}
Let $\mc{T}$ be a $(b;c)$-bridge trisection for an embedded surface $F\subset S^4$. Suppose $\mc{T}$ is irreducible and unstabilized. Then 
\begin{equation}\label{ineq_1}
\Lcal(\mc T)\geq \Lcal^*(\mc{T})\geq 3(b+c-3). 
\end{equation}
Moreover, the following statements hold.
\begin{enumerate}[(i)]
\item If $F$ is disconnected and $c=2$, then $\Lcal^*(\mc T)\geq 3b$. 
\item If $b=4$ and $c=2$, then $\Lcal^*(\mc{T})\geq 12$. 
\item If $c\geq 2$, then $\Lcal(\mc{T})\geq 3(b+c-2)$. 
\item If $c=2$, then $\Lcal(\mc{T})\geq 3(b+1)$. 
\end{enumerate}
\end{thm}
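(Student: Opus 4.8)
\emph{Strategy.} First I would dispose of $\Lcal(\mc T)\ge\Lcal^*(\mc T)$, which is formal: both invariants are defined by minimizing, over the isotopy class of $\mc T$, a sum of lengths of three paths $\gamma_1,\gamma_2,\gamma_3$ in a complex whose vertices are cut systems of the central surface $\Sigma$ (a sphere with $2b$ punctures), where the path $\gamma_i$ attached to the $i$-th sector runs between a cut system carried by the trivial tangle entering that sector and one carried by the tangle leaving it, subject to the compatibility imposed by the disk system $\D_i$. The only difference is that $\Lcal$ measures each $|\gamma_i|$ in the pants complex $\Pcal(\Sigma)$ and $\Lcal^*$ in the coarser dual curve complex; since every elementary move of $\Pcal(\Sigma)$ is an edge of the dual curve complex, any configuration realizing $\Lcal(\mc T)$ realizes $\Lcal^*(\mc T)$ with no greater total length, so $\Lcal\ge\Lcal^*$. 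The plan is then to prove the per-sector estimate $|\gamma_i|\ge b+c-3$ and sum over the three sectors to obtain \eqref{ineq_1}.

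For the per-sector bound I would fix a sector, write $\gamma_i$ for a path from a cut system $P$ carried by the incoming tangle $\tau$ to one $P'$ carried by the outgoing tangle $\tau'$, and assign to each cut system $Q$ along $\gamma_i$ the complexity $f(Q)$ counting the curves of $Q$ that are not isotopic to a curve bounding a compressing disk in the complement of $\tau'$. An edge of the dual curve complex changes a single curve of a cut system, so $|f(Q_{j+1})-f(Q_j)|\le 1$ along $\gamma_i$, and since $f(P')=0$ this gives $|\gamma_i|\ge f(P)$. Everything then reduces to showing $f(P)\ge b+c-3$. I would argue that the $b$ bridge curves of $\tau$, which occur among the curves of $P$, cannot compress $\tau'$: a curve compressing both $\tau$ and $\tau'$ bounds a reducing or perturbing sphere for the link $L_i=\partial X_i\cap F$, which is excluded since $\mc T$ is irreducible and unstabilized. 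The remaining $b-3$ curves of $P$ are less rigid, but compatibility with the $c$-component disk system $\D_i$ should pin down roughly $c-3$ further curves that cannot be chosen inside the disk set of $\tau'$, and combining the two contributions gives $b+c-3$.

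\emph{The hard part} will be this last count: genuinely converting ``no reducing sphere'' and ``no destabilization'' into the quantitative statement that the $\tau$- and $\tau'$-carried cut systems are forced $b+c-3$ apart in the dual curve complex. This is where the combinatorial analysis of $(b;c)$-bridge trisection diagrams does its work, and where I expect the bulk of the effort to lie. Once $|\gamma_i|\ge b+c-3$ is available, \eqref{ineq_1} follows by summing, and the refinements come from sharpening the same count. For (iii), one runs the argument in $\Pcal(\Sigma)$ directly, noting that an elementary move cannot perform the one-step curve replacement the dual curve complex allows at the final stage of each $\gamma_i$, so every $|\gamma_i|$ grows by one and $\Lcal(\mc T)\ge 3(b+c-2)$; (iv) combines this pants refinement with an extra bad curve available when $c=2$, giving $3(b+1)$; (i) uses that for disconnected $F$ with $c=2$ the bridge curves of an adapted cut system split along the components of $F$, so running the per-sector estimate componentwise and adding the contribution of the separating curve yields $3b$; and (ii), the sharp bound $\Lcal^*\ge 12$ for $(4;2)$-bridge trisections, I would import from \cite{aranda2021bounding} or recover by running the count above with the explicit curve configurations available when $b=4$ and $c=2$. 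Sharpness of the bounds is not needed for the inequalities and would be witnessed separately by explicit families.
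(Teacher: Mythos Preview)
Your proposal rests on a misreading of what the paths $\gamma_i$ actually are. In the definition of $\Lcal^*(\mc T)$ (and $\Lcal(\mc T)$), one first chooses for every pair $i<j$ an \emph{efficient defining pair} $(P^i_{ij},P^j_{ij})\in\Pcal_c(T_i)\times\Pcal_c(T_j)$ realizing the minimal distance $b-c$ between the two disk sets; the invariant is then the minimum of $\sum_i d(P^i_{ij},P^i_{ik})$. Each of the three summands is therefore a distance between two pants decompositions \emph{both lying in the same disk set} $\Pcal_c(T_i)$; neither endpoint is ``carried by'' a different tangle. Consequently your complexity $f(Q)$ --- counting curves of $Q$ that fail to compress in some other tangle $\tau'$ --- does not vanish at the terminal vertex and does not give a lower bound on path length.

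The mechanism the paper uses is different and more delicate. By the structure lemma for efficient pairs (Lemma~\ref{lem_1}), $P^i_{ij}$ and $P^j_{ij}$ share exactly $b+c-3$ curves, and each such common curve is a $c$-\emph{reducing} curve for the unlink $T_i\cup\T_j$ (it bounds $c$-disks on both sides). The heart of the argument is Proposition~\ref{prop_1}: if $\mc T$ is irreducible and unstabilized, then \emph{no} $c$-reducing curve for $T_i\cup\T_j$ can belong to $P^i_{ik}$. This is proved not by a counting function but by a geometric criterion: if such a curve $\gamma$ did lie in $P^i_{ik}$, then one uses the efficient-pair structure and the shadow-finding Proposition~\ref{lem_5} to produce either a $c$-disk for $T_k$ bounded by $\gamma$ (forcing reducibility) or a shadow for $T_k$ meeting $\gamma$ once (forcing a stabilization via Lemma~\ref{lem_2}). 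Once Proposition~\ref{prop_1} is in hand, every path from $P^i_{ij}$ to $P^i_{ik}$ must move all $b+c-3$ common curves, giving \eqref{ineq_1}.

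Two further corrections. For (ii), the reference \cite{aranda2021bounding} proves the $\Lcal$-bound for $(4;2)$-trisections, not the $\Lcal^*$-bound; the inequality $\Lcal^*\ge 12$ is new here and needs its own argument (Theorem~\ref{thm:L*_disco_42}), based on a parity analysis of which curve can move first (Lemmas~\ref{lem:gamma1_moves_first} and~\ref{lem:wheregamma2goes}). For (iii)--(iv), the extra unit per sector is not a formal ``pants complex loses one step'' phenomenon; it comes from the intersection-control Proposition~\ref{lem_7} (along a pants-complex path moving each curve once, all pairwise intersections stay $\le 2$), which lets one re-apply the reducibility/stabilization criterion of Lemma~\ref{lem:then_nonmin} to rule out minimal-length paths. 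Your sketch does not supply a substitute for either of these ingredients.
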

\begin{proof}
Inequality \eqref{ineq_1} is proven in Theorem \ref{thm_lower_L*}. Inequalities (i)-(iv) are proven in theorems \ref{thm:L*_disco}, \ref{thm:L*_disco_42}, \ref{thm_lower_L}, and \ref{thm_lower_L_c2},  respectively. 
\end{proof}
The inequalities in \eqref{ineq_1}, \textit{(i)}, \textit{(ii)}, and \textit{(iv)} are sharp due to Examples \ref{rmrk:unknotted_torus}, \ref{example:T2spin_L*}, \ref{exam:L*_2b}, \ref{example:spin_toruslink_L}, respectively. 
An interesting consequence of \textit{(ii)} is Example \ref{example:yoshi_L*} which shows that the $\Lcal^*$-invariant of the 2-twist spun trefoil is 12. 
By setting $b=4$ in \textit{(iv)} we recover Theorem 3.15 of \cite{aranda2021bounding}. 
Theorems \ref{thm:summary} and \ref{thm_unlinkdetector} can be used to conclude that bridge trisections of knotted spheres are stabilized. In particular, they might be useful when trying to show that potential exotically unknotted sphere are smoothly unknotted.

In \cite{blair2020kirby} and \cite{meier2020filling}, the notion of a bridge trisection for properly embedded surfaces in $B^4$ was introduced. 
The authors of \cite{blair2020kirby} also defined the $\Lcal$-invariant of a bridge trisection of $F\subset B^4$ and gave a lower bound in terms of the Euler characteristic. We give improved inequalities for $\Lcal^*$.
\newtheorem*{thm:lower_rel}{Theorem \ref{thm:lower_relcase}}
\begin{thm:lower_rel}
Let $\mc T$ be an irreducible $(b',c_1,c_2,c_3;v)$-bridge trisection for a properly embedded surface $F$ in $B^4$. 
\begin{enumerate}[(i)]
\item If $2c_i +v \geq 2$ $\forall i$, then $\Lcal^*(\mc T) \geq 2b(\mc T) -\chi(F)$. 
\item If $2c_i +v < 2$ $\forall i$, then $\Lcal^*(\mc T) \geq 4b(\mc T) + \chi(F) -6$.
\end{enumerate}
\end{thm:lower_rel}

This work is organized as follows. Section 2 briefly reviews the definitions of bridge trisections and spinning operations for knots in $S^3$. In particular, we described how to obtain a bridge trisection for the knotted torus obtained by spinning a knot $K$ in $S^3$ with respect to a 2-sphere away from $K$ (Lemma \ref{lem:T2_triplane}). Section 3 introduces the $\Lcal^*$-invariant of a knotted surface in $S^4$ and proves Theorem \ref{thm_unlinkdetector}. 
Section 4 develops the technical lemmas used to prove the lower bounds in Theorem \ref{thm:summary}. This section is purely combinatorial and discusses properties of certain pants decompositions associated to a given trivial tangle. Sections 5 and 6 show the lower bounds for $\Lcal^*(\mc T)$ and $\Lcal(\mc T)$, respectively. At the end of each section, one can find examples computing the invariants of specific families knotted surfaces. Section 7 discusses the $\Lcal^*$-invariant for knotted surfaces with boundary. In addition to proving Theorem \ref{thm:lower_relcase}, we give a preliminary estimate for the $\Lcal^*$-invariant of the standard ribbon disk for the square knot.    

\subsection*{Acknowledgments}
We are grateful to Scott Taylor for his constant advice. PP acknowledges the Pacific Institute for the Mathematical Sciences for the support. Some of this work was done when RA visited Colby College in Spring 2022.


\section{Knotted surfaces in $S^4$}\label{section:Background}
We begin by recalling the definitions of bridge trisections of surfaces in 4-space. For the full treatment, please consult \cite{meier2017bridge}. 
Given a trivial tangle $T$, the mirror image of $T$ will be denoted by $\T$. 
A \textbf{trivial} tangle is a pair $(B,T)$ where $B$ is a 3-dimensional ball and $T$ is a collection of properly embedded arcs in $B$ such that, relative to $\partial T$, the arcs can be isotoped to $\partial B$. We think of $\partial B$ as sphere with punctures given by $\partial T$. Embedded arcs in $\partial B$ isotopic relative to the boundary to arcs of $T$ are called \textbf{shadows} of $T$. 
A \textbf{bridge splitting} of a link $L$ in $S^3$ is a decomposition $(S^3,L)=(B_1, T_1)\cup_\Sigma (B_2, T_2)$ where each $(B_i,T_i)$ is a trivial tangle. The surface $\Sigma$ is called a bridge surface for $L$. 
A \textbf{trivial disk system} is a pair $(X,\mathcal{D}),$ where $X$ is the 4-ball, and $\mathcal{D}$ is a collection of properly embedded disks simultaneously isotopic, relative to the boundary, into $\partial X$. 

\begin{definition}
A \textbf{$(b;c_1,c_2,c_3)$-bridge trisection} of an embedded closed surface in $S^4$ is a decomposition $(S^4,F) = (X_1,\mathcal{D}_1)\cup (X_2,\mathcal{D}_2)\cup (X_3,\mathcal{D}_3)$, where for $i\in \mathbb{Z}/3\mathbb{Z}$,
\begin{enumerate}
    \item $(X_i,\mathcal{D}_i)$ is a trivial disk system with $|\mc{D}_i|=c_i$,
    \item $(B_{i},T_{i}) =(X_i,\mathcal{D}_i)\cap (X_{i+1},\mathcal{D}_{i+1})$ is a $b$-string trivial tangle, and
    \item $(X_1,\mathcal{D}_1)\cap (X_2,\mathcal{D}_2)\cap (X_3,\mathcal{D}_3)$ is a sphere with $2b$ marked points. 
\end{enumerate}
\end{definition}

It follows from the definition that, for each $i$, the link $L_i = T_i \cup \T_{i+1}$ is a $c_i$-component unlink in $b$-bridge position. One of the results of \cite{meier2017bridge} is that the tuple of tangles $(T_1, T_2,T_3)$ determines the embedding of the bridge trisected surface. 
For simplicity, we refer to $(b;c,c,c)$-bridge trisections as $(b,c)$-bridge trisections. The integer $b$ is called the \textbf{bridge number} of $\mc{T}$ and $b(F)$ is the minimum $b(\mc{T})$ over all bridge trisections of $F\subset S^4$. 

\subsection{New bridge trisections from old}

Let $\mc{T}_1$ and $\mc{T}_2$ be two bridge trisections for closed surfaces $F_1$ and $F_2$ in $S^4$. Following \cite{blair2020kirby}, one can build new trisections for new knotted surfaces as follows. The \textbf{distant sum} of $\mc{T}_1$ and $\mc{T}_2$ is the new bridge trisection obtained by taking the connected sum of the two copies of $S^4$ along a neighborhood of a point in the bridge surfaces of $\mc{T}_1$ and $\mc{T}_2$ that is disjoint from the punctures. The \textbf{connected sum} of $\mc{T}_1$ and $\mc{T}_2$ is the trisection obtained by taking the connected sum of the two copies of $S^4$ along a puncture on each bridge trisection surface. 
The former is a bridge trisection for the disjoint union $F_1\sqcup F_2$, and the latter for the connected sum $F_1\# F_2$. A bridge trisection that splits in at least one of these ways is called \textbf{reducible}. Equivalently (see \cite{blair2020kirby}), $\mc{T}$ is reducible if and only if there is a simple closed curve $r$ in $\Sigma$ bounding $c$-disks on each tangle $T_i$. A bridge trisection is \textbf{irreducible} if it is not reducible. 

In dimension three, one can obtain new bridge splittings from old ones of the same link via perturbations. Loosely speaking, a perturbation of a bridge splitting of $L$ is to consider an arc of $L$ near a puncture $(0,0,0)$ parametrized by $L(t)=(0,0,t)$ and replace it with $L'(t)=(0,0,t^3-t)$. 
The bridge trisection analogue of a perturbation is called an \textbf{stabilization} and was introduced by Meier and Zupan in \cite{meier2017bridge}. They showed that any two bridge trisections of the same knotted surface in $S^4$ are related by a sequence of stabilizations and destabilizations. 
Although a stabilization does not change the underlying surface, it increases the bridge number by one and the quantities $c_i$ in a controlled way. 
For the work in this paper, we will be interested on knowing when $b(\mc{T})$ is not minimal. Hence, a criterion for when a $\mc{T}$ is stabilized is useful. 
The following result follows from Lemma 6.2 of \cite{meier2017bridge}. 

\begin{lemma}[Stabilization Criterion \cite{meier2017bridge} Rephrased]\label{lem_2}
Let $\mc{T}$ be a bridge trisection with bridge surface $\Sigma$. Let $\gamma\subset \Sigma$ be a reducing curve for $T_i \cup \T_j$ and let $a$ be a shadow for an arc in $T_k$. Suppose $|a\cap \gamma|=1$, then $\mc{T}$ is stabilized. 
\end{lemma}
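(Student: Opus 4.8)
The plan is to reduce the statement to Lemma 6.2 of \cite{meier2017bridge}, which gives a concrete combinatorial description of when a bridge trisection is stabilized in terms of a shadow arc that can be ``undone'' against a reducing curve. First I would set up the picture: we have the bridge surface $\Sigma$, the three trivial tangles $(B_i,T_i)$, $(B_j,T_j)$, $(B_k,T_k)$ with $\{i,j,k\}=\{1,2,3\}$, a reducing curve $\gamma\subset\Sigma$ for $T_i\cup\T_j$ — meaning $\gamma$ bounds a disk in the handlebody $B_i$ disjoint from $T_i$ and a disk in $B_j$ disjoint from $T_j$ (equivalently, it splits off a sub-bridge-splitting of the unlink $L_i$ or $L_j$) — and a shadow $a$ for an arc $\tau$ of $T_k$ with $|a\cap\gamma|=1$. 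The goal is to produce a destabilizing configuration, i.e.\ to exhibit $\mc T$ as a stabilization of a bridge trisection of one lower bridge number.

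The key steps, in order: (1) Use $\gamma$ to compress $\Sigma$ and realize the tangle $T_i$ (resp.\ $\T_j$) as a connected sum along $\gamma$, so that the arc $\tau$ in $T_k$ meeting the shadow $a$ lies in a standard position with respect to the splitting sphere that $\gamma$ spans. (2) Because $|a\cap\gamma|=1$, the endpoint data of $\tau$ is split by $\gamma$ into the two sides; this is exactly the local model of a perturbed (stabilized) strand: one endpoint of $\tau$ sits over the ``small'' disk cut off by $\gamma$ and the strand crosses $\gamma$ once. I would then invoke the dual/mirror description of stabilization from \cite{meier2017bridge} (Lemma 6.2), which says that such a strand, together with the compressing disks for $\gamma$ on the $T_i$ and $T_j$ sides, assembles into the standard stabilization pattern — a trivial ``eyeglass'' move — so that an isotopy supported near $\gamma\cup a$ cancels one bridge. (3) Conclude that $\mc T$ is obtained from a lower-bridge-number trisection by a single stabilization along the disk cobounded by (a push-off of) $a$ and an arc of $\gamma$, hence $\mc T$ is stabilized.

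Concretely I expect the argument to be short once the translation is made: the content is really ``$|a\cap\gamma|=1$ and $\gamma$ reducing for two of the three tangles $\Rightarrow$ the hypotheses of \cite{meier2017bridge} Lemma 6.2 hold with the roles of the tangles permuted appropriately.'' One has to be slightly careful that a reducing curve for $T_i\cup\T_j$ (an unlink splitting for $L_i$) gives compressing disks on \emph{both} $B_i$ and $B_{i+1}$; this is where the definition of ``reducing curve'' and the fact that $L_i=T_i\cup\T_{i+1}$ is an unlink in bridge position are used, and it guarantees the disk that $a$ gets pushed across is genuinely trivial on the two relevant sides. The main obstacle, and the only real work, is checking that the single intersection point of $a$ with $\gamma$ indeed forces $\tau$ to be parallel into $\gamma$ on the $B_k$ side after compressing — i.e.\ that no other strands of $T_k$ obstruct the cancellation; this follows because $a$ is a shadow (so $\tau$ is isotopic into $\Sigma$ rel endpoints) and the compressing disk for $\gamma$ can be chosen disjoint from the other shadows of $T_k$, but it should be spelled out. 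Everything else is bookkeeping of the stabilization move, so I would keep the proof to a paragraph citing Lemma 6.2 of \cite{meier2017bridge} for the local model and emphasizing only the index permutation and the role of $|a\cap\gamma|=1$.
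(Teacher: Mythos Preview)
Your approach is exactly the paper's: the authors give no proof of this lemma at all, stating only that ``the following result follows from Lemma 6.2 of \cite{meier2017bridge}'' and labelling it a rephrasing. Your plan to translate the hypotheses (a reducing curve $\gamma$ for $T_i\cup\T_j$ together with a $T_k$-shadow $a$ meeting $\gamma$ once) into the input of Meier--Zupan's Lemma~6.2 is precisely the intended content, and your observation that the single intersection forces the endpoints of the $T_k$-arc onto opposite sides of $\gamma$ is the only real step. One small caution: your remark that ``the compressing disk for $\gamma$ can be chosen disjoint from the other shadows of $T_k$'' is neither well-posed (those disks live in $B_i$ and $B_j$, not $B_k$) nor needed; the destabilization in \cite{meier2017bridge} is purely local near $a\cup\gamma$ and does not require any global disjointness from the rest of $T_k$.
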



\subsection{Families of knotted surfaces}


%
We review the spinning construction due to Artin \cite{artin1925isotopie}. Let $L$ be a link in $S^3$ and let $(B, L^\circ)$ be the 1-string tangle obtained from removing an open ball centered on a point of $L$. Label the points of $\partial L^\circ$ by $\{n,s\}$. The spin of $L$ is the surface $S(L)$, 
\[ \left( S^4, S(L)\right) = \left( (B,L^\circ)\times S^1\right) \cup \left( (S^2,\{n,s\})\times D^2\right).\]
If $L$ is a knot, $S(L)$ is a knotted sphere. If $L$ is disconnected, then $S(L)$ will be a link of $|L|-1$ tori and one 2-sphere. In particular, if $L$ is a 2-bridge link, then each component of $L$ is unknotted and $S(L)$ is a link of an unknotted torus and an unknotted 2-sphere. If $L$ is a $(2,n)$-torus link, then $S(L)$ is independent of which arc is used in the spinning construction. Even though this surface link has unknotted components, it is not an unlink unless $L$ is a 2-bridge unlink. 

\begin{proposition}\label{prop:spun2b}
Let $F$ be the spun of a nontrivial 2-bridge link (or knot) $L$. Then, 
\begin{enumerate}[(i)]
\item if $L$ is disconnected, then $F$ is not split,
\item the bridge number of $F$ is four, and
\item any 4-bridge trisections of $F$ is irreducible.
\end{enumerate}
\end{proposition}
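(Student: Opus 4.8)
The plan is to prove the three items in order, using the structure of the spinning construction and the classification results quoted in the introduction. First I would set up notation: let $L$ be the nontrivial $2$-bridge link or knot, and let $F = S(L)$. Recall that a $2$-bridge link has each component unknotted, so $S(L)$ is an unknotted $2$-sphere together with (if $L$ is a link of two components) an unknotted torus; the point is that although the \emph{components} are unknotted, $F$ itself is genuinely knotted/linked when $L$ is nontrivial.

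For item (i), suppose $L$ is a disconnected nontrivial $2$-bridge link. I would argue by contradiction: if $F = S(L)$ were split, then its complement would be a connected sum (along $S^2$) of the two component complements, and in particular $\pi_1(S^4 \setminus F)$ would be a free product. Using the standard computation of the fundamental group of a spun link complement — namely $\pi_1(S^4 \setminus S(L)) \cong \pi_1(S^3 \setminus L)$ — a splitting of $F$ would force a splitting of $\pi_1(S^3 \setminus L)$ compatible with the peripheral structure, contradicting that $L$ is a nonsplit link (every nontrivial $2$-bridge link is nonsplit). Alternatively one can invoke the meridional structure: the linking number of the two components of a nontrivial $2$-bridge link is nonzero (or the components are geometrically linked), and this linking is detected in $H_1$ of the complement of $F$, which rules out a split decomposition.

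For item (ii), the upper bound $b(F) \le 4$ comes from the explicit bridge trisection of a spun link: a $1$-string tangle presentation of $L$ of the appropriate width spins up to a $4$-bridge trisection of $S(L)$ — this is essentially the content of the construction reviewed before the proposition (and Lemma~\ref{lem:T2_triplane} in the link case). For the lower bound $b(F) \ge 4$, I would use the classification of low-bridge-number trisections from \cite{meier2017bridge}: bridge trisections with $b \le 3$ have underlying surface an unlink of unknotted surfaces. Since $F$ is not an unlink (for a knot $S(L)$ with $L$ nontrivial this is classical — e.g. the knot group is nonabelian; for a link it follows from item (i) together with the components being genuinely linked), we must have $b(F) \ge 4$, hence $b(F) = 4$.

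For item (iii), suppose some $4$-bridge trisection $\mc{T}$ of $F$ were reducible. By the reducibility criterion quoted in the excerpt, there is a simple closed curve $r$ in the bridge surface $\Sigma$ bounding $c$-disks on each tangle, so $\mc{T}$ is a distant sum or connected sum $\mc{T}_1 \# \mc{T}_2$ (or distant sum) of lower-complexity trisections, each with bridge number strictly less than $4$. Then $F$ would be a distant or connected sum of the underlying surfaces of $\mc{T}_1$ and $\mc{T}_2$, each of which — having bridge number $\le 3$ — is an unlink of unknotted surfaces by \cite{meier2017bridge}. A distant sum of unlinks of unknotted surfaces is again such an unlink, and a connected sum of unknotted surfaces is unknotted; either way $F$ would be an unlink of unknotted surfaces, contradicting what we established above. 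Hence every $4$-bridge trisection of $F$ is irreducible.

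The main obstacle I anticipate is item (i): cleanly extracting "not split" from the spun-link construction. The cleanest route is the fundamental-group computation $\pi_1(S^4 \setminus S(L)) \cong \pi_1(S^3 \setminus L)$ combined with the fact that a splitting of the surface would induce a nontrivial free-product decomposition of this group respecting meridians, which $2$-bridge link groups of nonsplit links do not admit (they have no such Kneser–Milnor-type splitting because $S^3 \setminus L$ is irreducible with connected, incompressible-enough boundary behavior). Items (ii) and (iii) are then relatively short, being formal consequences of the $b \le 3$ classification once "$F$ is not an unlink" is in hand.
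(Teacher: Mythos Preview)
Your proof is correct and, for item (i), essentially matches the paper: both use $\pi_1(S^4\setminus S(L))\cong\pi_1(S^3\setminus L)$, the paper phrasing the conclusion as ``a split $F$ would force the group to be free of rank two''. (Drop your $H_1$/linking-number alternative, however: two-component $2$-bridge links can have linking number zero, and $H_1$ of the complement does not detect the splitting.)

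For (ii) and (iii) you take a somewhat different, more uniform route than the paper. The paper disposes of (ii) by citing Theorem~2.5 of \cite{aranda2021bounding} and noting that its proof extends to the disconnected case since $\chi(F)=2$, whereas you argue directly from the Meier--Zupan $b\le 3$ classification; this is more self-contained, though your pointer to Lemma~\ref{lem:T2_triplane} is off, since that lemma treats $T(L)$ rather than $S(L)$ (the $4$-bridge trisection of $S(L)$ is the one from \cite{meier2017bridge}). For (iii), you argue globally that any summand of a reducible $4$-bridge trisection has $b\le 3$ and hence underlies an unlink of unknotted surfaces, so $F$ would be one as well. The paper instead, after ruling out distant sums via (i), performs a case analysis of the connected-sum decomposition in the disconnected case only: it excludes $(2;1)$ summands by orientability and pins down the remaining possibility as a $(3;1)$ summand (the unknotted torus) with a $(2;2)$ summand (the trivial $S^2$-link), which would again force $F$ to split. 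Your argument has the advantage of handling the knot and link cases simultaneously; the paper's makes the Euler-characteristic and orientability constraints explicit.
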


\begin{proof}
(i) Via Van-Kampen’s theorem, we see that if the link $F$ splits, then the fundamental group of its complement is a rank 2 free group. Since spinning preserves the knot group, spinning a nontrivial 2-bridge link will not produce a rank 2 free group unless $L$ is an unlink.\\ 
(ii) The proof of Theorem 2.5 of \cite{aranda2021bounding} works on the disconnected case as well since $\chi(F)=2$. \\
(iii) We know that $F$ does not split from (i). Suppose that a bridge trisection $\mathcal{T}$ for a disconnected surface $F$ decomposes as $\mathcal{T}_1 \# \mathcal{T}_2$. Observe that no $\mathcal{T}_i$ can be a (2,1)-bridge trisection because that would imply that $F$ is non-orientable. Without loss of generality, this means that $\mathcal{T}_1$ is a $(3,1)$-bridge trisection and $\mathcal{T}_2$ is a $(2,2)$-bridge trisection. But a knotted surface admitting a $(2,2)$-bridge trisection is the trivial $S^2$-link. We now get a contradiction since a connected sum of a trivial $S^2$-link and an unknotted torus is a split knotted surface.
\end{proof}


Let $L$ be a link in $S^3$. Instead of removing an arc from $L$, we can perform Artin's spin construction to the whole link. More precisely, let $B\subset S^3$ be a 3-ball containing $L$ in its interior. Define the $T^2$-spin of $L$ to be the surface $T(L)$, 
\[ \left( S^4, T(L)\right) = \left( (B,L)\times S^1\right) \cup \left( (S^2,\emptyset)\times D^2\right).\]
If $L$ is disconnected, then $T(L)$ is a link of tori. If $L$ is a 2-bridge link, $T(L)$ is a link of unknotted tori in $S^4$. 
Using a similar argument as in Section 5.1 of \cite{meier2017bridge}, we can get a triplane diagram for $T(L)$ as shown in Figure \ref{fig:bandedlink}. We summarize the process here for completeness. 
\begin{figure}[h]
\labellist \small\hair 2pt  
\pinlabel {(a)}  at 25 850 
\pinlabel {(b)}  at 575 850 
\pinlabel {(c)}  at  1150 850
\endlabellist \centering
\includegraphics[width=10cm]{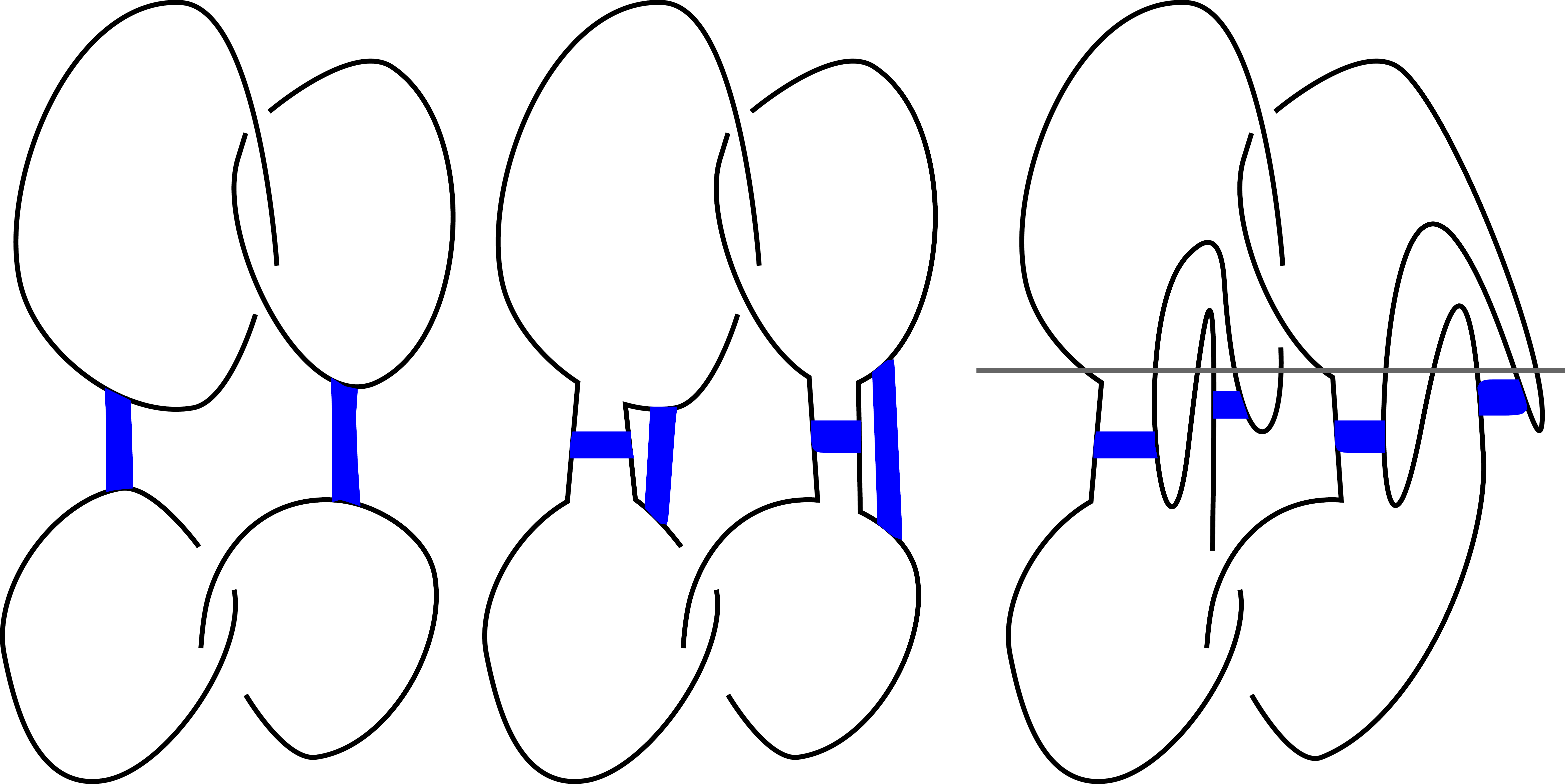}
\caption{Getting a $(3n;n)$-bridge trisection for $T(L)$.}
\label{fig:bandedlink}
\end{figure}
\begin{lemma}\label{lem:T2_triplane}
Let $L$ be a $n$-bridge link (or knot) in $S^3$. The $T^2$-spin of $L$ admits a $(3n,n)$-bridge trisection.
\end{lemma}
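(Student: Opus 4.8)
The plan is to imitate Meier and Zupan's construction for spun knots (Section~5.1 of \cite{meier2017bridge}) almost verbatim, working inside the splitting $S^4 = (D^3\times S^1)\cup_{S^2\times S^1}(S^2\times D^2)$ together with the standard genus-$0$ trisection of $S^4$ adapted to it. Fix an $n$-bridge splitting $L = \tau^+\cup_P\tau^-$ into trivial $n$-strand tangles, with shadow arcs $\delta^+_1,\dots,\delta^+_n$ and $\delta^-_1,\dots,\delta^-_n$ in $P$. Cut $S^1$ into three arcs $a_1,a_2,a_3$ and $D^2$ into three sectors $s_1,s_2,s_3$ meeting along radii at the center, and set $X_i = (D^3\times a_i)\cup(S^2\times s_i)$; one recalls that $X_i\cong B^4$, $X_i\cap X_{i+1}\cong B^3$, and the central surface $\Sigma = X_1\cap X_2\cap X_3$ is the meridian $2$-sphere $S^2\times\{0\}$, so this is indeed the standard genus-$0$ trisection of $S^4$. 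In the product position $T(L) = L\times S^1\subset D^3\times S^1$ the surface misses $\Sigma$ and meets each $X_i$ in annuli, so the whole point is to isotope $T(L)$, using only the triviality of $\tau^\pm$, into a position meeting each $X_i$ in a trivial system of $n$ disks.

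First I would perform that isotopy. Over $a_1$, push a collar of $T(L)$ across the equator $S^2\times S^1$ and run $n$ fingers, over disjoint neighborhoods in $P$ of the shadows $\delta^+_j$, radially into the $D^2$-factor, so that $T(L)\cap X_1$ becomes a trivial system of $n$ disks and the surface now meets $\Sigma$ transversally; do the analogous thing over $a_2$ with the shadows $\delta^-_j$ of $\tau^-$, and over $a_3$ with a third family of $n$ arcs completing the loop around $S^1$ (the precise choice being the one recorded in Figure~\ref{fig:bandedlink}, whose panels display the $n$-bridge form of $L$, the resulting banded presentation, and the final triplane diagram of three $n$-strand trivial tangles in a $6n$-punctured disk decorated by the shadows of $\tau^\pm$). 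Since $\tau^\pm$ are trivial, the shadows are disjoint embedded arcs in $P$, the fingers are disjoint and embedded, and $T(L)\cap X_i$ is a trivially embedded collection of $n$ disks; hence $c_i = n$ for every $i$. The bridge number is then forced: $\chi(T(L)) = \chi(L)\cdot\chi(S^1) = 0$, and any $(b;c_1,c_2,c_3)$-bridge trisection satisfies $\chi = c_1+c_2+c_3-b$, so $b(\mc{T}) = 3n$ and $\mc{T}$ is a $(3n;n,n,n) = (3n,n)$-bridge trisection.

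The step I expect to be the genuine work is verifying the remaining conditions in the definition of a bridge trisection: that each $T_i = X_i\cap X_{i+1}$ is a trivial $3n$-strand tangle and that each $L_i = T_i\cup\overline{T}_{i+1}$ is the $n$-component unlink in $3n$-bridge position. Triviality of $T_i$ should fall out of the construction, since $T_i$ is glued from collars of the trivial tangles $\tau^\pm$ together with arcs pushed radially into $D^2$; the unlink condition reduces to the assertion that capping $\tau^+$ (resp.\ $\tau^-$) off on its far side with disks guided by its shadows produces the $n$-component unlink, which holds because the shadows of a trivial tangle bound disjoint embedded disks in the adjacent ball. Carrying this out rigorously means tracking all the shadow arcs through the three identifications $X_i\cong B^4$ — precisely the diagrammatic bookkeeping that Figure~\ref{fig:bandedlink} summarizes. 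An alternative and perhaps cleaner route is to read a banded unlink presentation of $T(L)$ directly off the plat form of $L$ (birth $n$ unknots at the doubled $\tau^+$-shadows, attach $2n$ bands dictated by $\tau^+$ and $\tau^-$, and die into $n$ unknots at the doubled $\tau^-$-shadows) and then invoke the standard passage from a sufficiently thin banded unlink to a triplane diagram; but in either approach the crux is the same accounting of shadows.
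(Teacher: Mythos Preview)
Your alternative route — building a banded unlink presentation of $T(L)$ with $n$ minima, $2n$ saddles, and $n$ maxima, then invoking the passage from a banded bridge splitting to a triplane diagram — is exactly what the paper does. The paper realizes the ribbon annuli in $B^4$ bounded by $L\sqcup\overline L$ as an $n$-component unlink with $n$ bands (one per bridge of $L$), doubles to get the banded unlink for $T(L)$, isotopes into banded $3n$-bridge position, and applies Lemma~3.2 of \cite{meier2017bridge} to obtain a $(3n,n)$-bridge trisection. That is a complete argument and is the one you should commit to.

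Your primary route, by contrast, is underspecified at the step that actually matters. In the product position $T(L)\cap X_i = L\times a_i$ is a collection of $|L|$ annuli, not disks, and ``pushing $n$ fingers over the shadows $\delta^+_j$ radially into the $D^2$-factor'' does not, as written, explain how those annuli become $n$ trivial disks: a finger that stays inside $S^2\times s_1$ does not change the topology of $T(L)\cap X_1$, while a finger that crosses $\Sigma$ into $X_{i\pm1}$ must then be coordinated with whatever you push over $a_2$ and $a_3$ so that \emph{their} pieces are also trivial disk systems. You never specify the ``third family of $n$ arcs'' over $a_3$, and you defer the bookkeeping to Figure~\ref{fig:bandedlink}, but that figure records the banded unlink construction, not a direct isotopy inside the trisected $S^4$. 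The Euler-characteristic computation $b=3n$ is fine once $c_i=n$ is known, but establishing $c_i=n$ and the triviality of the tangles is the entire content of the lemma, and your geometric sketch does not do it. Drop the finger-move paragraph and run the banded bridge splitting argument; that is both shorter and rigorous.
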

\begin{proof}
Begin by picking a $n$-bridge splitting for $L$. The procedure is depicted in Figure \ref{fig:bandedlink} for the Hopf link (n=2). One can represent the ribbon annuli in $B^4$ bounded by $L\sqcup \overline{L}$ as a split union of $L$ and its mirror image connected by n bands, one per bridge of $L$. 
The bands indicate the saddles for the annuli. A banded unlink diagram for the double of this annuli can be obtained by band surgeries and dual band attachments. Thus we get a banded presentation for $T(L)$ with $n$ minima, $2n$ saddles and $n$ maxima. Isotope the banded unlink diagram as in Figure \ref{fig:bandedlink}(c) (once per pair of saddles). 
The isotopy was done so that the bands are parallel to $y^*$ embedded arcs in the bridge surface and there is a collection of shadows $a^*$ for the bottom tangle of the unlink in $3n$-bridge position such that $a^*\cup y^*$ is a collection of pairwise disjoint arcs. The bridge splitting of the $3n$-unlink, together with the bands satisfying the previous properties is, by definition a banded bridge splitting \cite{meier2017bridge}. By Lemma 3.2 of \cite{meier2017bridge}, this yields a $(3n,n)$-bridge trisection of $T(L)$ as in Figure \ref{fig_T2_spun_knot_triplane}. 
We can obtain the tuple of tangles for $\mc{T}$ as follows: the bottom and top tangle in Figure \ref{fig:bandedlink}(c) correspond to $T_1$ and $T_3$, and the tangle $T_2$ is obtained by performing a band surgery to $T_3$. 
\end{proof}
\begin{figure}[h]
\centering
\includegraphics[width=1\textwidth]{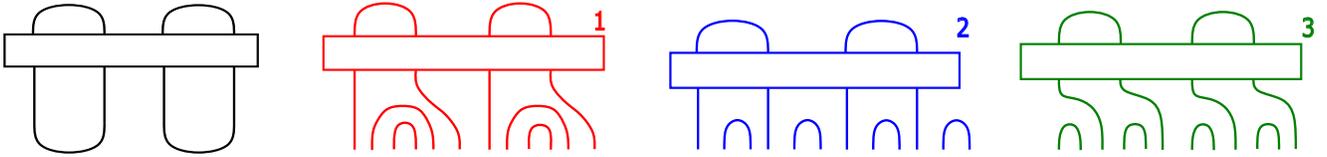}
\caption{A $(6,2)$-bridge tri-plane diagram for the $T^2$-spin of a $2$-bridge link (or knot) $L$ given in bridge position (left).}
\label{fig_T2_spun_knot_triplane}
\end{figure}
\begin{lemma}\label{lem:6bridge}
Let $L$ be a nontrivial 2-bridge link. The bridge number of $T(L)$ is six. Moreover, every minimal $(b;c_1,c_2,c_3)$-bridge trisection for $T(L)$ satisfies $(b;c_1,c_2,c_3)=(6;2,2,2)$.
\end{lemma}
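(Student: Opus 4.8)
The plan is to combine the explicit bridge trisection of Lemma~\ref{lem:T2_triplane} with the Euler characteristic identity for bridge trisections and a short connectivity argument. Recall first that, since $L$ has two components, $T(L)=L\times S^1$ is a disjoint union of two tori; in particular it is disconnected and $\chi(T(L))=0$. Taking $n=2$ in Lemma~\ref{lem:T2_triplane} exhibits a $(6;2,2,2)$-bridge trisection of $T(L)$, so $b(T(L))\le 6$. Using the identity $\chi(F)=c_1+c_2+c_3-b$ for a $(b;c_1,c_2,c_3)$-bridge trisection of $F$ (see \cite{meier2017bridge}), which here reads $b=c_1+c_2+c_3$, it suffices to prove that $c_i\ge 2$ for every $i$ in any bridge trisection of $T(L)$: this forces $b=c_1+c_2+c_3\ge 6$, hence $b(T(L))=6$, and at $b=6$ it forces $c_1+c_2+c_3=6$, i.e.\ $(c_1,c_2,c_3)=(2,2,2)$.

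The key observation is that if $c_i=1$ for some $i$, then the underlying surface $F$ is connected; since $T(L)$ is disconnected, this immediately gives $c_i\ge 2$ for all $i$. To prove it, let $D$ be the unique disk of $\mathcal{D}_i$. By the definition of a bridge trisection, $\partial\mathcal{D}_{i+1}=T_i\cup T_{i+1}$, and each of its components is a component of the unlink $T_i\cup\overline{T}_{i+1}$ in $b$-bridge position with respect to the bridge sphere $\Sigma$. Since every component of a link in bridge position meets $\Sigma$, it alternates between arcs of $T_i$ and arcs of $T_{i+1}$, so the boundary of each disk $E$ of $\mathcal{D}_{i+1}$ contains at least one arc of $T_i=\mathcal{D}_i\cap\mathcal{D}_{i+1}$; that arc also lies on $\partial D$, so $D$ and $E$ lie in the same component of $F$. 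The same reasoning shows that the boundary of each disk of $\mathcal{D}_{i-1}$ contains an arc of $T_{i-1}=\mathcal{D}_{i-1}\cap\mathcal{D}_i$, hence shares it with $D$. As $F=\mathcal{D}_{i-1}\cup\mathcal{D}_i\cup\mathcal{D}_{i+1}$ and every disk meets $D$ inside $F$, the surface $F$ is connected.

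The only point that needs care is the claim that each disk of $\mathcal{D}_j$ carries at least one boundary arc from each of its two adjacent tangles $T_{j-1}$ and $T_j$, which I expect to be the main (if minor) obstacle; it follows because no component of a link in bridge position can be disjoint from the bridge sphere, since each trivial tangle consists only of properly embedded arcs (and a closed loop is not an arc). The remaining ingredients---the description of $T(L)$ as $L\times S^1$ and the Euler characteristic identity---are routine.
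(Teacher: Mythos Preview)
Your argument is correct and takes a genuinely different route from the paper's. The paper deduces the result from two facts about the unknotted torus---that its bridge number is three and that every $(3;c_1,c_2,c_3)$-bridge trisection of it has $c_i=1$---together with (implicitly) the additivity of bridge number under restriction to the components of a disconnected surface: since each component of $T(L)$ is an unknotted torus, $b(T(L))\ge 3+3=6$, and at $b=6$ each component sits in a $3$-bridge trisection, forcing $c_i=1+1=2$. You bypass all of this by combining the Euler characteristic identity $b=c_1+c_2+c_3$ with the elementary observation that $c_i=1$ forces $F$ to be connected. Your route is more self-contained: it does not require knowing that the components of $T(L)$ are unknotted tori, nor the bridge number of the unknotted torus, nor additivity.

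One caveat: you begin with ``since $L$ has two components,'' but the lemma is stated for a nontrivial $2$-bridge \emph{link}, and the paper later applies it to $(2,n)$-torus knots as well. For a nontrivial $2$-bridge knot $K$, the surface $T(K)$ is a single knotted torus, so your disconnectedness argument does not apply. That said, the paper's own one-line proof is most naturally read as treating only the two-component case (via the two unknotted-torus components), and it is not evident how it would handle knots either; so this is more a feature of the statement than a defect of your proof. If you want your write-up to match the lemma as stated, you should either add a separate argument for the knot case or note explicitly that you are proving the two-component case.
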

\begin{proof}
This follows from the fact that the bridge number of the unknotted torus is equal to three and that if $\mc{T}$ is a $(3;c_1,c_2,c_3)$-bridge trisection for it, then $c_i=1$. 
\end{proof}
\begin{lemma}\label{lem:bridget2}
Let $L=L_1\cup L_2$ be a nontrivial 2-bridge 2-component link. Then, any minimal bridge trisection of $T(L)$ is irreducible.
\end{lemma}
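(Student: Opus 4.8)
The plan is to combine the classification of low-bridge-number trisections with Proposition \ref{prop:spun2b}(iii) and Lemma \ref{lem:6bridge}. By Lemma \ref{lem:6bridge}, a minimal bridge trisection $\mc{T}$ of $T(L)$ is a $(6;2,2,2)$-bridge trisection. Suppose for contradiction that $\mc{T}$ is reducible. Then, by the definitions in Section 2, $\mc{T}$ decomposes as a distant sum or a connected sum $\mc{T}_1 \star \mc{T}_2$, where each $\mc{T}_i$ is a $(b_i; c^i_1, c^i_2, c^i_3)$-bridge trisection with $b_1 + b_2 = 6$ and, for each $j$, $c^1_j + c^2_j = 2$ (for a connected sum, the $c_j$ add with the usual $-1$ correction coming from identifying a puncture; I will track the exact bookkeeping in the writeup, but the upshot is that each summand has small bridge number). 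Since $b_i \geq 2$ for any genuine trisection summand, the only possibilities are $\{b_1,b_2\} = \{2,4\}$ or $\{b_1,b_2\}=\{3,3\}$.

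Next I would rule out each case. First, $T(L)$ is not split: this follows from the fundamental group argument of Proposition \ref{prop:spun2b}(i), since the $T^2$-spin preserves the link group and a nontrivial $2$-bridge link does not have free complement-group, so a distant sum is impossible and $\star$ must be a connected sum. For the connected-sum cases, I use the classification from \cite{meier2017bridge, joseph2021bridge}: a $(2;c)$-bridge trisection underlies a trivial $S^2$-link, and a $(3; c_1, c_2, c_3)$-bridge trisection of a connected surface underlies an unknotted $S^2$ or an unknotted nonorientable surface (here, with $T(L)$ orientable, the relevant $3$-bridge pieces are unknotted spheres). In the $\{3,3\}$ case, $\mc{T}$ would present a connected sum of two surfaces each of which is an unknotted sphere or a standard piece, forcing $T(L)$ to be an unknotted torus (or a split/unknotted surface), contradicting the nontriviality of $L$ via the knot-group argument. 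In the $\{2,4\}$ case, one summand is a trivial $S^2$-link, so $T(L) = (\text{trivial } S^2\text{-link}) \# (\text{something})$; taking connected sum with an $S^2$ does not change the surface, so $T(L)$ would itself admit a $4$-bridge trisection, contradicting Lemma \ref{lem:6bridge} which says the bridge number of $T(L)$ is six.

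The main obstacle is the bookkeeping in the connected-sum case: one must carefully verify how the $c_i$ (and hence the component counts and orientability) split across $\mc{T}_1$ and $\mc{T}_2$, and confirm that the only orientable surfaces realized by the small summands are unknotted spheres, so that the connected sum cannot produce the (nontrivially) knotted-in-$S^4$ structure forced by $\pi_1$. A secondary point to nail down is that $L = L_1 \cup L_2$ being a genuine $2$-component link (so $T(L)$ is a $2$-component torus link rather than a single torus) interacts correctly with the component counts $c_i = 2$; in particular one should check that neither summand can be a $(2;1)$-piece, exactly as in the proof of Proposition \ref{prop:spun2b}(iii), since that would force a nonorientable summand. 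Once these combinatorial possibilities are eliminated, irreducibility follows.
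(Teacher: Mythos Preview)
Your overall strategy—reduce to small summands and invoke the classification of low-bridge trisections—parallels the paper's opening move, but there is a genuine gap in the decisive case.

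First, a bookkeeping slip: for a connected sum of bridge trisections one has $b_1+b_2-1=6$, so $b_1+b_2=7$ (and $c^1_j+c^2_j=3$), not $6$. Thus the relevant unordered pairs are $\{2,5\}$ and $\{3,4\}$ (the pair $\{1,6\}$ is excluded since a cut-reducing curve is essential). Your $\{2,4\}$ argument, once corrected to $\{2,5\}$, does go through: the $2$-bridge factor is a trivial $S^2$-link and connect-summing an $S^2$ gives $T(L)$ a $5$-bridge trisection, contradicting Lemma~\ref{lem:6bridge}. The distant-sum case is also fine in spirit, though note Proposition~\ref{prop:spun2b}(i) is about $S(L)$, not $T(L)$; you need to rerun the Van~Kampen/knot-group argument for the $T^2$-spin.

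The real problem is the $\{3,4\}$ case. You assert that the relevant $3$-bridge pieces are unknotted spheres, but a $(3;1,1,1)$-bridge trisection can also be the \emph{unknotted torus} $U$—and since $T(L)$ is a link of tori, this is exactly what happens. So the decomposition is $\mc T_1\#\mc T_2$ with $\mc T_2$ a $(3,1)$-trisection of $U$ and $\mc T_1$ a $(4,2)$-trisection of some two-component surface $F_1\sqcup F_2$. Now $(4,2)$-bridge trisections are \emph{not} classified, so your classification-based elimination stalls: nothing in your outline rules out $T(L)\cong (F_1\sqcup F_2)\# U$ with $F_1\sqcup F_2$ genuinely knotted.

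This is precisely where the paper brings in a different idea. Following Livingston, they observe that if one torus component of $T(L)$ were of the form $F_1\# U$ with $U$ unknotted, then every loop on that torus would be nullhomotopic in $S^4\setminus T(L)$. But the loop $m=L_1\times\{\text{pt}\}$ on the spun torus maps to the longitude of $L_1$ in $\pi_1(S^3\setminus L)\cong\pi_1(S^4\setminus T(L))$, and Dehn's Lemma shows this is nontrivial since $L$ is a non-split $2$-bridge link. This $\pi_1$/Dehn's Lemma step is the missing ingredient; without it (or an equivalent obstruction) the $\{3,4\}$ case cannot be closed.
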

\begin{proof}
Suppose that a trisection $\mathcal{T}$ of $T(L)$ decomposes as a connected sum $\mathcal{T}_1\#\mathcal{T}_2.$ Without loss of generality, due to Lemma \ref{lem:6bridge} and the fact that $T(L)$ is disconnected, $\mathcal{T}_1$ admits a $(4,2)$-bridge trisection and $\mathcal{T}_2$ admits a $(3,1)$-bridge trisection. In other words, $T(L)$ is the result of connect summing the unknotted torus $U$ with a 2-component knotted surface, where one component $F_1$ is the sphere and the other component $F_2$ is the torus.

We now use an argument similar to Livingston's proof of Corollary 3.2 in \cite{livingston1985stably}. Namely, let $\alpha$ be an element on the torus of $\pi_1(F_1\# U)$ and denote the image of $\alpha$ induced by inclusion $\iota:\pi_1(F_1\#U)\rightarrow \pi_1(S^4\backslash T(L))$ by $\tilde{\alpha}$. On one hand, any such $\tilde{\alpha}=1$ because $U$ is unknotted. On the other hand, since $T(L)$ arises from spinning, we claim that there is a nontrivial element $\tilde{m}\neq 1\in \pi_1(S^4\backslash T(L))$. Looking back to the definition of $(S^4, T(L)) = ( (B,L)\times S^1) \cup ((S^2,\emptyset)\times D^2),$ we let $m=L_1 \times \lbrace \text{point} \rbrace \subset L_1\times S^1$. Recall that $\pi_1(S^4\backslash T(L))$ is isomorphic to $\pi_1(S^3\backslash L)$ and if $\tilde{m}=1$ in $\pi_1(S^4\backslash T(L))$ then $\tilde{m}=1$ in $\pi_1(S^3\backslash L)$. This would imply that the map of $\pi_1(\partial(\nu(L_1))\rightarrow\pi_1(S^3\backslash L)$ is not injective. By Dehn's Lemma, $L_1$ bounds a disk in $S^3\backslash L$ contradicting the fact that $L$ is a non-split 2-bridge link. 

\end{proof}

\section{Kirby-Thompson invariants}

A \textbf{pair-of-pants} is a connected planar surface (possibly with punctures and boundaries) with Euler characteristic equal to -1.
Let $\Sigma$ be a sphere 
with punctures. A \textbf{pants decomposition} is a collection $P$ of essential simple closed curves in $\Sigma$ such that $\Sigma-P$ is a disjoint union of pairs of pants. Every pants decomposition of $\Sigma$ has $2b - 3$ 
non-isotopic curves. 
The \textbf{dual curve graph} $C^*(\Sigma)$ is a graph, where each vertex is pants decomposition of $\Sigma$.
Two vertices are connected by an edge if they differ by a single curve. More specifically, if $u'$ is obtained from $u$ by through the process of removing one curve and replacing it with a curve disjoint from all the other curves so that the new curves still cut $\Sigma$ into pairs of pants. 
An important subgraph of $C^*(\Sigma)$ is 
the \textbf{pants complex} $\Pcal(\Sigma)$, where 
two vertices $u$ and $u'$ are connected by an edge if $u'$ can be obtained from $u$ by removing a curve $c$ and replacing $c$ with a curve $c'$, where $|c\cap c'|=2$. We will refer to edges in $\Pcal(\Sigma)$ as A-moves. 
Examples of edges in both complexes can be found in Figure \ref{fig:dualtrefoilpath}.

We say that a properly embedded disk $D$ in a trivial tangle $(B,T)$ is a \textbf{compressing disk} (resp. \textbf{cut-disk}) if $D$ is disjoint from $T$ (resp. intersects $T$ once in its interior) and its boundary is essential in $\Sigma$. A \textbf{c-disk} is by definition either a compressing disk or a cut-disk. We say that a vertex $v \in C^*(\Sigma)$ or $P(\Sigma)$ belongs to the \textbf{disk set} $\mathcal{P}_c(T)$ if every curve of $u$ bounds a $c$-disk in $T$.
Let $L$ be a link in $S^3$ with a bridge surface $\Sigma$; i.e. $(S^3,L)=(B_1,T_1)\cup_\Sigma (B_2,T_2)$. 
An essential simple closed curve in $\Sigma$ is \textbf{reducing} (resp. \textbf{cut-reducing}) if it bounds a compressing disk (resp. cut-disk) in both tangles. 

\begin{definition}\label{definition:efficientpair}
Let $(B_1,T_1)$ and $(B_2,T_2)$ be two trivial tangles with the same boundary surface $\Sigma$. 
A pair of vertices $(u ,w) \in \mathcal{P}_c(T_1) \times \mathcal{P}_c(T_2)$ 
is said to be an \textbf{efficient defining pair for $T_1\cup \T_2$} if the distance $d\left(\mathcal{P}_c(T_1), \mathcal{P}_c(T_1)\right) =d(u,w)$ in $\C^*(\Sigma)$. 
\end{definition}
From the work of Hatcher and Thurston \cite{hatcher_thurston}, the distance in the definition above is always finite. 
In \cite{zupan2013bridge}, Zupan showed that the distance $d(u,v)$ of an efficient defining pair $(u,v)$ for a $c$-component unlink in $b$-bridge position is $b-c$. Blair, Campisi, Taylor and Tomova \cite{blair2020kirby} further studied efficient defining pairs for unlinks and used them to define the $\Lcal$-invariant of embedded surfaces in $S^4$ or $B^4$. Using this philosophy, we now introduce the $\Lcal^*$-invariant. 
As in \cite{blair2020kirby}, one can talk about $\Lcal^*$-invariants of a properly embedded surface in $B^4$. In order to simplify the presentation, we defer the study of the relative case until Section \ref{section:relative}. 
A surface $\Sigma$ is \textbf{admissible} if $\chi(\Sigma)\leq -2$.
\begin{definition}\label{definition:star}
Let $F$ be a closed surface embedded in $S^4$.  
Let $\mc{T}$ be a bridge trisection for $F$. 
If the bridge surface of $\mc{T}$ 
is not admissible, then we define $\mathcal{L}^*(\mc{T}) = 0$. 
For each $i<j$, let $\left(P^i_{ij}, P^j_{ij}\right)$ be an efficient pair for the unlink  $T_i\cup \T_j$. 
We define $\mathcal{L}^*(\mathcal{T})$ 
to be the minimum of 
\[
d(P^1_{12},P^1_{31})+d(P^2_{23},P^2_{23})+d(P^3_{31},P^3_{23}),
\]
over all choices of efficient defining pairs. The surface invariant \textbf{$\mathcal{L}^*(F)$} is defined to be the minimum $\mathcal{L}^*(\mathcal{T})$ over all bridge trisections $\mathcal{T}$ for $F$ such that $b(\mathcal{T})=b(F)$.
\end{definition}

\begin{remark}
If one replaces the distance $d(x,y)$ in $C^*(\Sigma)$ with the pants complex distance $d_{\Pcal(\Sigma)}(x,y)$ in definitions \ref{definition:efficientpair} and \ref{definition:star}, we recover the original Kirby-Thompson invariant $\mathcal{L}(F)$ defined in \cite{blair2020kirby}. In particular, $\Lcal(\mc T)\geq \Lcal^* (\mc T)$. This paper will give lower bounds for both invariants in the closed setting.  
\end{remark}

\begin{figure}[h!] 
\labellist \small\hair 2pt  
\pinlabel {$T_{1}$}  at 100 295 
\pinlabel {{\color{white}$T_{2}$}}  at 450 505 
\pinlabel {{\color{white}$T_{3}$}}  at 510 105 
\pinlabel \scriptsize$P_{12}^1$  at 166 376 
\pinlabel \scriptsize$P_{31}^1$  at 162 200 
\pinlabel \scriptsize$P_{31}^3$  at 386 90 
\pinlabel \scriptsize$P_{12}^2$  at 356 490 
\pinlabel \scriptsize$P_{23}^2$  at 519 401 
\pinlabel \scriptsize$P_{23}^3$  at 522 216 
\pinlabel {
\begin{rotate}{-24}
{\scriptsize
$b(\mathcal{T})-c_1$
}
\end{rotate}
}
at 170 70
\pinlabel {
\begin{rotate}{30}
{\scriptsize
$b(\mathcal{T})-c_2$
}
\end{rotate}
}
at 150 480
\pinlabel 
\scriptsize
$b(\mathcal{T})-c_3$
at 700 295 
\endlabellist \centering 
\includegraphics[width=0.4\textwidth]{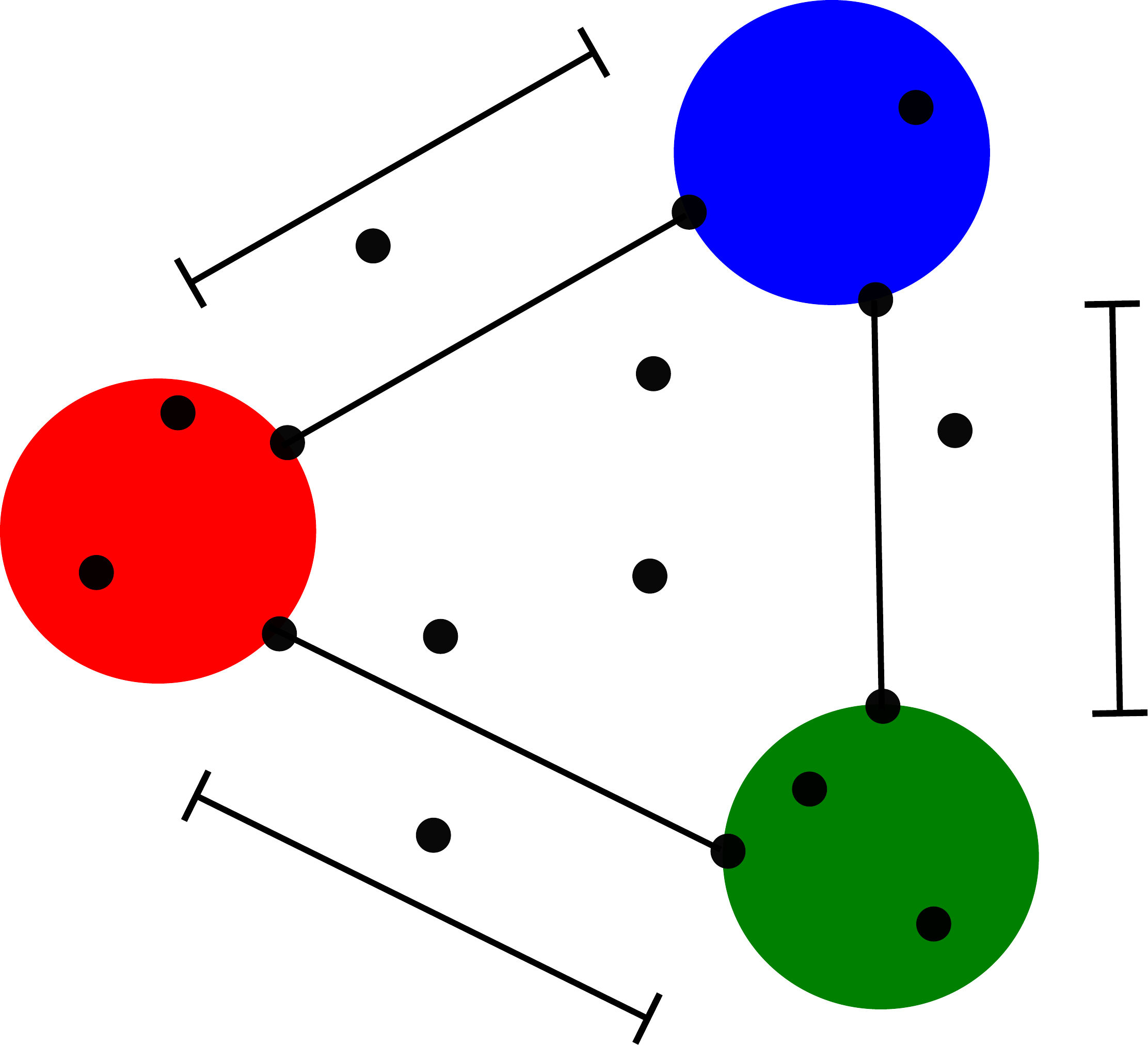}
\caption{Defining $\mathcal{L}^* (\mc T)$ via efficient defining pairs. The ellipses represent the disk sets. The line joining $P_{ij}^{i}$ to $P_{ij}^{j}$ represents a geodesic path in the pants complex.}
\label{fig_L_invariant}
\end{figure}



We finish this section with a result about the structure of efficient defining pairs for the unlink. It is important to mention that the proof of Lemma \ref{lem_1} appears in Lemma 5.6 of \cite{blair2020kirby}. One can extract more detailed conclusions from the original proof in \cite{blair2020kirby} which are key for the work in this paper. For completeness, we include a proof-sketch. 
A first application of Lemma \ref{lem_1} is Theorem \ref{thm_unlinkdetector} which implies that bridge trisections with small $\Lcal^*$-invariant are standard. 

\begin{lemma}[Lemma 5.6 of \cite{blair2020kirby}]\label{lem_1}
Let $(P,P')$ be a geodesic pair for a {$c$-component} unlink in {$b$-bridge} position with bridge surface $\Sigma$. There exist collections of simple closed curves $\psi$, $g$, $g'$ in $\Sigma$ with $P=\psi \cup g$ and $P'=\psi \cup g'$ satisfying the following properties, 
\begin{enumerate}[(i)]
\item the sets have size $|g|=|g'|=b-c$ and $|\psi|=b+c-3$, 
\item any geodesic in 
$C^*(\Sigma)$ connecting $P$ and $P'$ moves each curve in $g$ exactly once,  
\item for all $x\in g$, $y\in g'$, $|x\cap y|\leq 2$, and 
\item for all $x\in g$, there is a unique $x'\in g'$ such that $|x\cap x'|=2$.
\end{enumerate}
In particular, the collections of geodesic pairs in $C^*(\Sigma)$ and $\Pcal(\Sigma)$ are the same. 
\end{lemma}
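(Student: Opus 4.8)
The plan is to follow the structure of the argument in \cite{blair2020kirby} while tracking the extra geometric data needed for items (ii)--(iv). The starting point is Zupan's result (recalled above): for a $c$-component unlink $L = T_1 \cup \T_2$ in $b$-bridge position, the distance in $C^*(\Sigma)$ between $\mathcal{P}_c(T_1)$ and $\mathcal{P}_c(T_2)$ is exactly $b-c$. So a geodesic pair $(P,P')$ is realized by a geodesic of length $b-c$ in $C^*(\Sigma)$, and since each edge of $C^*(\Sigma)$ changes exactly one curve, at most $b-c$ of the $2b-3$ curves of $P$ ever move along this geodesic. First I would isolate the set of curves that move: define $\psi \subseteq P \cap P'$ to be the curves fixed by some fixed choice of geodesic, and $g = P \setminus \psi$, $g' = P' \setminus \psi$. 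The count $|\psi| = (2b-3) - (b-c) = b+c-3$ and $|g| = |g'| = b-c$ is then forced as soon as one shows the geodesic must move $b-c$ \emph{distinct} curves (no curve is removed and later reintroduced) -- this is where Zupan's lower bound $d \geq b-c$ is used again, in the contrapositive: if some curve were moved twice we could shortcut. This gives (i), and simultaneously (ii) for \emph{this particular} geodesic; upgrading (ii) to "any geodesic" requires showing the set $\psi$ is canonical, which I address below.

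Next, for (iii) and (iv) I would analyze what a single edge of $C^*(\Sigma)$ based at a disk-set vertex can do. The key structural input is that every curve of $P$ bounds a $c$-disk in $T_1$ and every curve of $P'$ bounds a $c$-disk in $T_2$, so $P$ and $P'$ are both \emph{reducing/cut-reducing systems} in their respective tangles. An edge replacing $x \in g$ by some curve must produce a new pants decomposition all of whose curves still bound $c$-disks in $T_1$ at the intermediate stages; combined with the fact that $L$ is an unlink, this rigidly constrains how the replacement curve sits relative to the $\psi$-curves and the $g'$-curves. Concretely, each $x \in g$ must be replaced (somewhere along the geodesic) by its counterpart, and a "reducing curve versus reducing curve" intersection-number argument -- of the type used for Hempel-distance-one or for the classification of unlink bridge spheres -- forces $|x \cap y| \leq 2$ for $x \in g$, $y \in g'$, with equality for exactly one $y$ (the one $x$ is "replaced by"). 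I would phrase this using the observation that two curves bounding disks on opposite sides of a reducible surface, if they intersect minimally in more than $2$ points, would contradict the unlink structure or the geodesic length. Item (iv)'s uniqueness then comes from the bijection between the moves of the geodesic and the curves of $g$: the geodesic has exactly $b-c = |g|$ edges, each responsible for "finishing off" one curve of $g$ and landing on the matching curve of $g'$.

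The main obstacle, and the place I expect to spend the most care, is the final sentence: \emph{any} geodesic (not just a chosen one) moves each curve of $g$ exactly once, and the geodesic pairs in $C^*(\Sigma)$ and $\Pcal(\Sigma)$ coincide. For the first part I would argue that $\psi = P \cap P'$ intrinsically (any curve common to $P$ and $P'$ need never be moved, and conversely any curve not common to both must be moved by every geodesic since the endpoints differ on it), so the decomposition $P = \psi \cup g$, $P' = \psi \cup g'$ does not depend on the geodesic; then a shortcutting argument shows no geodesic can afford to move a $\psi$-curve or to move a $g$-curve twice, given the tight length $b-c$. For the equivalence of the two complexes: a $\Pcal(\Sigma)$-geodesic is in particular a $C^*(\Sigma)$-path of the same length, hence a $C^*(\Sigma)$-geodesic; conversely, by (iii)--(iv) each edge of a $C^*(\Sigma)$-geodesic replaces $x$ by a curve meeting it in at most $2$ points, and the case $|x \cap x'| = 2$ is precisely an A-move while $|x \cap x'| < 2$ would force $x, x'$ disjoint and isotopic (impossible for distinct curves in a pants decomposition) -- so every $C^*(\Sigma)$-geodesic edge is in fact an A-move, and the two geodesic sets agree. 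I would double-check the degenerate low-complexity cases ($b-c$ small, or $\Sigma$ barely admissible) by hand, and otherwise cite Lemma 5.6 of \cite{blair2020kirby} for the parts of the argument that transfer verbatim, only writing out in detail the intersection-number bookkeeping that yields (iii) and (iv).
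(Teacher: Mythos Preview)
Your counting argument for (i) and (ii) is essentially sound once you invoke the explicit $\Pcal$-geodesic of length $b-c$ constructed in \cite{blair2020kirby}: that geodesic moves $b-c$ distinct curves and never returns any of them, so $\psi=P\cap P'$ really is intrinsic, and then any other geodesic of length $b-c$ is forced to spend each of its $b-c$ moves removing a distinct element of $g=P\setminus P'$ and inserting a distinct element of $g'=P'\setminus P$. Your deduction of the final sentence from (ii) and (iii) is also correct: each edge of a $C^*$-geodesic replaces some $x\in g$ by some $y\in g'$, and on the sphere $|x\cap y|$ is even, so $|x\cap y|\le 2$ together with $x\ne y$ in the $4$-holed sphere forces $|x\cap y|=2$, i.e.\ an A-move.

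The gap is in (iii) and (iv). The phrase ``a reducing curve versus reducing curve intersection-number argument'' is not an argument, and there is no straightforward global way to bound $|x\cap y|$ for \emph{non-matching} pairs $x\in g$, $y\in g'$. From (ii) alone you only learn that $x$ is disjoint from those $y\in g'$ that appear \emph{before} $x$ is removed along a given geodesic; for $y$ appearing after $x$ is removed you have no control, and in particular nothing prevents $|x\cap y|\ge 4$ or $|x\cap y|=2$ for several $y$. Similarly, your uniqueness argument for (iv) via ``the bijection between moves and curves of $g$'' only shows $|x\cap x'|=2$ for the matched curve $x'$, not that this fails for every other $y\in g'$. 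You also cannot bootstrap through Proposition~\ref{lem_7}, since that requires a $\Pcal$-path and you have not yet established that an arbitrary $C^*$-efficient pair is joined by one --- that is exactly the final sentence you are trying to prove.

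The paper avoids this by an entirely different mechanism: induction on $b$. One first observes that any geodesic must have a common curve $\gamma$ (otherwise its length is at least $2b-3>b-c$), and that $\gamma$ bounds $c$-disks of the same type on both sides, hence gives a reducing or decomposing sphere $S$. Surgering $(S^3,L)$ along $S$ splits both the unlink and the bridge surface into two pieces of strictly smaller bridge number, to which $P$, $P'$, and any geodesic between them restrict. The inductive hypothesis then delivers (i)--(iv) on each piece, and these reassemble to give (i)--(iv) on $\Sigma$; the base case $b=2$ is checked by hand (the two compressing disks are neighborhoods of dual bridge disks, forcing intersection exactly $2$). This cut-and-induct structure is what actually produces the intersection bounds (iii) and (iv) for an \emph{arbitrary} efficient pair, and your outline does not supply a substitute for it.
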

\begin{proof}
 We will induct on $b$. If $b=2,$ then $v$ and $w$ each consists of exactly one compressing disk. There are two possibilities. If the curve in $v$ is the same curve as the one in $w$, then there is an essential separating sphere coming from the gluing the compressing disks on two sides and $c=2$. Therefore, $d(u,w)=0= b-c.$ If the curve in $v$ is different from the curve in $w$, then $d(u,w)=1 =b-c.$ Furthermore, since $L$ is the 2-bridge unknot in this case, the two compressing disks on opposite sides must be neighborhoods of bridge disks $D_+$ and $D_-$. Here, $|D_+\cap D_-|$ in precisely one point on the knot, and so $|u\cap w|=2.$

Suppose that the statements hold for all bridge spheres with bridge number $b'< b.$ Using the fact that bridge positions for unlinks are perturbed \cite{otal1982perturbed}, the authors in \cite{blair2020kirby} produced a geodesic of length $b-c$ with the three properties stated in the lemma. This means that a path without a common curve cannot be a geodesic as the length of such a path is at least $2b-3$ which exceeds the upper bound of $b-c$ provided in \cite{blair2020kirby}. 
Let $\gamma$ be such a common curve bounding the same type of compression on both sides. In other words, there is a sphere $S$ either disjoint from $L$ or $|S\cap L| =2$ such that $S\cap\Sigma=\gamma$. Surger $(S^3,L)$ along $S$ to obtain a decomposition $(S^3,L)=(S^3,L_1)\cup (S^3,L_2)$. Let $b_i$ and $c_i$ denote the bridge number and the number of components of $L_i$, respectively. Note that $(S^3,L_i)$ inherits a bridge sphere $\Sigma_i$ from $\Sigma.$ Let $u_i$ and $w_i$ be restrictions of $u$ and $w$ to $\Sigma_i.$ Then, a geodesic $l$ connecting $u$ to $w$ restricts to a geodesic $l_i$ connecting $u_i$ and $w_i.$ Using the induction hypothesis, since $b_i< b$ and $c_i<c,$ we know that the length of $l_i$ is at least $b_i-c_i.$ Therefore, $d(u,w)\geq b_1+b_2-c_1-c_2=b-c$ as desired.
\end{proof}


\begin{theorem}\label{thm_unlinkdetector}
Let $\mc{T}$ be a bridge trisection with $\Lcal^*(\mc{T})\leq 2$. Then $\mc{T}$ is a distant sum of connected sums of stabilizations of one-bridge and two-bridge trisections. In particular 
the underlying surface-link is an unlink unknotted 2-spheres and unknotted nonorientable surfaces.
\end{theorem}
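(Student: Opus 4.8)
The plan is to show that $\Lcal^*(\mc{T}) \leq 2$ forces the tangle tuple $(T_1,T_2,T_3)$ to be, up to stabilization, a distant sum of connected sums of the tangles underlying one-bridge and two-bridge trisections; the conclusion about the surface-link then follows from the classification of $(b;b,c_2,c_3)$- and $(b;b-1,c_2,c_3)$-bridge trisections in \cite{meier2017bridge,joseph2021bridge}, together with the fact that such completely decomposable trisections have unlink underlying surfaces. First I would dispose of the case where $\Sigma$ is not admissible, i.e. $\chi(\Sigma) \geq -1$: by definition $\Lcal^*(\mc{T}) = 0$ here, and since $\chi(\Sigma) = 2 - 2b$ this means $b \leq 1$, so $\mc{T}$ is a one-bridge trisection and there is nothing to prove. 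So assume $\chi(\Sigma) \leq -2$, i.e. $b \geq 2$.

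Now fix efficient defining pairs $(P^i_{ij}, P^j_{ij})$ realizing $\Lcal^*(\mc{T}) = d(P^1_{12}, P^1_{31}) + d(P^2_{23}, P^2_{12}) + d(P^3_{31}, P^3_{23}) \leq 2$. Since each summand is a nonnegative integer, at least one of the three distances is $0$; by cyclic symmetry say $d(P^1_{12}, P^1_{31}) = 0$, so $P^1_{12} = P^1_{31} =: P^1$ is a single pants decomposition all of whose curves bound $c$-disks in $T_1$ with respect to both $\T_2$ and $T_3$. I would then invoke Lemma \ref{lem_1} applied to the two unlinks $L_1 = T_1 \cup \T_2$ and $L_3 = T_3 \cup \T_1$: part (i) gives that the efficient pair $(P^1_{12}, P^2_{12})$ for $L_1$ shares $b + c_1 - 3$ curves, and similarly for $L_3$, so $P^1$ (being the $T_1$-side vertex of both) contains a large common sub-collection $\psi_{12}$ with $P^2_{12}$ and a large common sub-collection $\psi_{31}$ with $P^3_{31}$. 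The heart of the argument is to produce from $P^1$ a \emph{reducing curve} for the trisection — a curve $r \subset \Sigma$ bounding $c$-disks on all three tangles simultaneously — which by the reducibility criterion recalled in the excerpt (equivalently \cite{blair2020kirby}) splits $\mc{T}$ as a distant sum or connected sum. Then one recurses on the two pieces, checking that the $\Lcal^*$-budget is additive under this splitting (so each piece still has $\Lcal^* \leq 2$), and that the base cases of the recursion are exactly the one- and two-bridge trisections and their stabilizations; the remaining $\leq 2$ of budget is what permits, after the common curves are exhausted, at most a bounded number of A-moves, which translate into stabilizations via Lemma \ref{lem_2} (a shadow meeting a reducing curve once).

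The main obstacle I anticipate is the middle step: extracting a genuine trisection-reducing curve (or, failing that, directly a stabilization) from the combinatorial data that $P^1$ is a common disk-set vertex for $T_1$ against both neighbors while $d(P^2_{23}, P^2_{12}) + d(P^3_{31}, P^3_{23}) \leq 2$. The subtlety is that $P^1$ bounding $c$-disks in $T_1$ "from both sides" does not immediately hand over a curve bounding $c$-disks in $T_2$ and $T_3$ too; one must use the short geodesics on the other two sides of the triangle, the control on intersection numbers from Lemma \ref{lem_1}(iii)--(iv), and an Euler-characteristic or complexity count to either find a curve that survives as a reducing curve after the bounded A-moves, or locate a shadow arc meeting such a curve exactly once to invoke Lemma \ref{lem_2}. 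Handling the non-orientable possibilities (where a $(2,1)$-piece appears, forcing an unknotted projective plane or Klein bottle summand as in the theorem's "nonorientable surfaces" clause) requires allowing cut-disks and cut-reducing curves throughout, which is why the disk set $\mathcal{P}_c$ is defined with $c$-disks rather than just compressing disks; I would carry the cut-disk case in parallel rather than as an afterthought. Once the reducing curve or stabilization is in hand, the induction and the cited classifications close the argument.
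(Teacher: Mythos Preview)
Your overall skeleton (find a reducing curve, split $\mc T$, check $\Lcal^*$ is additive, recurse) is the same as the paper's, but the proposal leaves the decisive step unresolved and the surrounding scaffolding is not quite right.

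\medskip

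\textbf{The main gap.} You correctly flag as the ``main obstacle'' the extraction of a curve $r\subset\Sigma$ bounding $c$-disks in \emph{all three} tangles, and you propose to get it from the observation that one corner satisfies $d(P^i_{ij},P^i_{ik})=0$, combined with the intersection bounds of Lemma~\ref{lem_1}(iii)--(iv) and the stabilization criterion Lemma~\ref{lem_2}. This is not how the paper proceeds, and your route is not fleshed out: knowing $P^1_{12}=P^1_{31}$ only tells you every curve there bounds a $c$-disk in $T_1$; to get one that also bounds $c$-disks in $T_2$ and $T_3$ you would need a curve in $\psi_{12}\cap\psi_{31}$, and a direct inclusion--exclusion on $|\psi_{12}|+|\psi_{31}|$ versus $|P^1|=2b-3$ does not force a nonempty intersection in general (e.g.\ when $c_i=1$). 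The paper's argument bypasses this entirely: it looks at the whole closed loop $\lambda$ in $C^*(\Sigma)$ through the six vertices $P^1_{12},P^2_{12},P^2_{23},P^3_{23},P^3_{31},P^1_{31}$, whose total length is at most $2+\sum_i(b-c_i)$. By Lemma~\ref{lem_1}(i)--(ii) every efficient-pair geodesic fixes $b+c_i-3$ curves, and once $b$ is large enough this count (against only $\le 2$ moves at the corners) forces some curve $x$ to lie in \emph{all six} pants decompositions simultaneously. That $x$ is the reducing curve. No intersection-number analysis and no appeal to Lemma~\ref{lem_2} are needed.

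\medskip

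\textbf{The base of the induction.} Your base case $b\le 1$ is too small. The loop-counting above only works once $b$ is at least $5$ (with $b=4$ handled by a short separate argument), so the induction must be anchored at $b\le 3$. The paper does this by invoking the Meier--Zupan classification of bridge trisections with $b\le 3$ together with the computation $\Lcal^*(\text{unknotted torus})=3$ (Example~\ref{rmrk:unknotted_torus}), which rules out the one non-standard $b=3$ case. Your references to the $(b;b,c_2,c_3)$- and $(b;b-1,c_2,c_3)$-classifications of \cite{meier2017bridge,joseph2021bridge} are not what is used here; those concern different families and do not serve as the base case for this theorem.
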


\begin{proof}
Throughout this proof, when we say that a bridge trisection is standard, we mean that $\mc{T}$ is as in the conclusion of the Theorem. 
We will proceed by induction on $b\geq 1$. By \cite{meier2017bridge}, every bridge trisection with $b\leq 3$ is either standard 
or the unique bridge trisection of the unknotted torus in $S^4$. The latter satisfies $\Lcal^*(\mc{T})=3$ (Remark \ref{rmrk:unknotted_torus}). So the result holds for $b\leq 3$. If $b=4$, the condition $\Lcal^*(\mc{T})\leq 2$ implies the existence of two efficient pairs $\left( P^i_{ij}, P^j_{ij}\right)$ and $\left(P^i_{ik}, P^k_{ik}\right)$ with a common curve $x\in \left(P^i_{ij}\cap P^j_{ij}\right) \cap \left(P^i_{ik}\cap P^k_{ik}\right)$. Hence, $\mc{T}$ is the distance sum or connected sum of two bridge trisections with bridge number $b'\leq 3$. Hence, $\mc{T}$ is standard. 

Suppose that, for some $b\geq 4$, the result hols for all bridge trisections with bridge number at most $b$. Let $\mc{T}$ be a $(b+1;c_1,c_2,c_3)$-bridge trisection with $\Lcal^*(\mc{T})\leq 2$. 
By the definition of $\Lcal^*(\mc{T})$, 
there exists a shortest loop $\lambda$ in the dual complex $C^*(\Sigma)$ passing through all pants $P^i_{ij}$ in the cyclic order $P^1_{12}$, $P^2_{12}$, $P^2_{23}$, $P^3_{23}$, $P^3_{13}$, $P^1_{13}$. Since $\Lcal^*(\mc{T})\leq 2$, $\lambda$ has length at most $2+3(b+1)-(c_1+c_2+c_3)$. 
Part (ii) of Lemma \ref{lem_1} states that every efficient pair has $(b+1)+c_i-3$ common curves. since $b\geq 4$ and $c_i\geq 1$, $(b+1)+c_i-3>2$. Thus, there exists a curve $x$ that does not move along $\lambda$. In particular $x\in \bigcap_{i,j} \left(P^i_{ij}\cap P^j_{ij}\right)$ and $\mc{T}$ is reducible. Let $\mc{T}_1$ and $\mc{T}_2$ be the two new bridge trisections decomposing $\mc{T}$ either as a distance sum of connected sum. The bridge number of each $\mc{T}_i$ is at most $b$. Moreover, since $x$ is fixed by $\lambda$, we get that $\Lcal^*(\mc{T}_1) +\Lcal^*(\mc{T}_2)=\Lcal^*(\mc{T})\leq 2$ and so $\Lcal^*(\mc{T})\leq 2$. Thus, by the inductive hypothesis, each $\mc{T}_i$ is standard. 
\end{proof}



\section{Combinatorics of pants in $\Pcal_c(T)$} 
The goal of this section is to show Propositions \ref{lem_5} and \ref{lem_3} which are sufficient conditions to ensure that a loop bounds a c-disk in a tangle. Along this section, $T$ is a $b$-string trivial tangle in a 3-ball with boundary a surface $\Sigma$. By $p\sim_T q$, we mean that the punctures $p$ and $q$ are connected by an arc in the tangle $T$.

\begin{lemma}\label{lem_0}
The following hold for any planar surface. 
\begin{enumerate}
\item Any pants decomposition of an $n$-punctured disk has at least $\lfloor {n/2} \rfloor$ curves bounding an even number of punctures. 
\item Given pairwise disjoint disks bounding $x_1\geq 1$ punctures each, any pants decomposition of the union of the disks has at least $\frac{1}{2}\sum x_i - \frac{1}{2}\#\left\{x_i: x_i \text{ is odd}\right\}$ curves bounding an even number of punctures. 
\item If $P$ is a pants decomposition for $\Sigma_{0,2b}$, then $P$ has at most $b-2$ curves bounding an odd number of punctures. 
\end{enumerate}
\end{lemma}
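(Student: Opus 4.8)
The plan is to encode a pants decomposition of a planar surface by its dual tree and to record, for each curve, only the \emph{parity} of the number of punctures it cuts off; since this parity behaves additively along the tree, all three parts reduce to one statement about $0/1$-labelled binary trees. I would isolate that statement first: if $T$ is a rooted tree in which every internal vertex has exactly two children, with $\ell$ leaves each labelled $1\in\mathbb{Z}/2\mathbb{Z}$, and $X(v)$ denotes the mod-$2$ sum of the labels of the leaves in the subtree at $v$ (equivalently, the parity of the number of those leaves), then at most $\lfloor(\ell-1)/2\rfloor$ internal vertices $v$ satisfy $X(v)=1$. This goes by strong induction on $\ell$: the cases $\ell\le 2$ are immediate, and if the root splits into subtrees with $\ell_1$ and $\ell_2$ leaves then the count is the corresponding count in subtree $1$ plus that in subtree $2$ plus ($1$ if $\ell$ is odd); by the inductive hypothesis and superadditivity of the floor this is at most $\lfloor(\ell_1+\ell_2-2)/2\rfloor+[\ell\text{ odd}]=\lfloor(\ell-1)/2\rfloor$, checking the two parities of $\ell$ separately.

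For part (1), given an $n$-punctured disk $D$ with pants decomposition $P$, I would build the dual tree: one trivalent vertex per pair of pants, one leaf per puncture, one leaf for $\partial D$, with edges the curves of $P$ (each joining two pants) together with the $n+1$ leaf edges. Rooting at the $\partial D$-leaf presents the pants as the internal vertices of a binary tree with $n$ puncture-leaves, and each $c\in P$ is the edge directly above a unique internal vertex $v(c)$; by construction $c$ cuts off exactly the punctures of the subtree at $v(c)$, so $c$ bounds an even number of punctures precisely when $X(v(c))=0$. The combinatorial core bounds the number of $c\in P$ with $X(v(c))=1$, hence bounds below how many of the $n-2$ curves of $P$ bound an even number of punctures; running the arithmetic — and, when $n$ is even, also counting $\partial D$ itself, which cuts off all $n$ punctures — gives the claimed $\lfloor n/2\rfloor$. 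Part (2) is then immediate: a pants decomposition of a disjoint union of punctured disks restricts to one on each piece, and summing the part-(1) bound over the pieces gives exactly $\tfrac12\sum_i x_i-\tfrac12\#\{i:x_i\text{ is odd}\}$.

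For part (3) I would either rerun the dual-tree argument directly on $\Sigma_{0,2b}$, this time rooting at one of the $2b$ punctures: the pair of pants adjacent to the root then has a subtree containing the other $2b-1$ — an odd number — of punctures, so of the $2b-3$ curves the combinatorial core leaves at most $(b-1)-1=b-2$ bounding an odd number of punctures. Alternatively, and closer to the logic of the lemma, I would cut $\Sigma_{0,2b}$ along any $c\in P$ into a $k$-punctured disk and a $(2b-k)$-punctured disk, note that every curve of $P\setminus\{c\}$ cuts off the same set of punctures in its piece as in $\Sigma$, bound the odd curves in each piece by the complement of the part-(1) estimate, and add $1$ for $c$ exactly when $k$ is odd; the cases $k$ even and $k$ odd both yield $b-2$.

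The mechanism and the induction are routine; the point that needs care is the bookkeeping. One must pin down precisely what a curve ``bounds'' in a surface with boundary — equivalently, whether and how $\partial D$ counts as a curve bounding $n$ punctures — so as to land exactly on $\lfloor n/2\rfloor$, rather than $\lfloor n/2\rfloor-1$, in part (1); and one must keep the dictionary between curves of $P$ and edges of the dual tree straight when moving between the bounded and closed settings. Everything else is a direct translation of the tree statement.
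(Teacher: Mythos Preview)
Your proposal is correct and, at heart, follows the same route as the paper's (very terse) proof: part~(1) goes by induction on the number of punctures, part~(2) by summing the part-(1) bound over the disks, and part~(3) by regarding $\Sigma_{0,2b}$ as a $(2b-1)$--punctured disk after singling out one puncture. Your dual-tree formulation simply makes that induction explicit --- splitting at the root into two subtrees is exactly the inductive step ``cut along a curve and recurse'' --- and your combinatorial core is a clean way to package it. The paper says nothing beyond ``by induction on $n\geq 1$'', so your presentation is more detailed but not a genuinely different argument.

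You are right to flag the bookkeeping around $\partial D$. The paper's convention (visible from how part~(2) is applied later, and from the $n=2$ case where the pants decomposition is empty yet $\lfloor n/2\rfloor=1$) is that the boundary of the disk is counted among the ``curves bounding an even number of punctures'' when its puncture count is even. With that convention your arithmetic lands exactly on $\lfloor n/2\rfloor$ in both parities of $n$, and your derivation of part~(3) matches the paper's one-line reduction.
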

\begin{proof}
The first claim follows by induction on $n\geq 1$. The second one from the fact that $x/2- \lfloor {x/2} \rfloor$ is $1/2$ if $x$ is odd and $0$ otherwise.  
The third claim follows from the first one by thinking of the any pants decomposition in $\Sigma_{0,2b}$ as a pants decomposition for a disk with $(2b-1)$ punctures.
\end{proof}

\subsubsection*{The graph $\Gcal(P)$} 

Let $(B,T)$ be a trivial tangle and let $P\in \Pcal_c(T)$ be a pants decomposition for $\Sigma$ so that every curve in $P$ bounds a c-disk for $T$. Define a graph $\Gcal(P)$ as follows: The vertices of $\Gcal$ are either pants components of $\Sigma-P$ or punctures of $\Sigma$. The edges of $\Gcal$ correspond to loops in $P$ and to curves $\partial \eta (p)$ for each puncture $p$. 
\begin{figure}[ht!]
\labellist \small\hair 2pt 
\pinlabel {$1$}  at 50 64  
\pinlabel {$2$}  at 230 80 
\pinlabel {$3$}  at 175 70 
\pinlabel {$4$}  at 145 59  
\pinlabel {$5$}  at 220 65  

\pinlabel {$p$}  at -5 40  
\pinlabel {$a$}  at 35 42 
\pinlabel {$b$}  at 59 45  
\pinlabel {$c$}  at 115 42 
\pinlabel {$d$}  at 136 45  
\pinlabel {$q$}  at 162 40 
\pinlabel {$s$}  at 205 44  
\pinlabel {$t$}  at 228 44  

\pinlabel {$1$}  at 367 72
\pinlabel {$2$}  at 395 84 
\pinlabel {$3$}  at 425 50 
\pinlabel {$4$}  at 405 33 
\pinlabel {$5$}  at 435 70 

\pinlabel {$p$}  at 330 52  
\pinlabel {$a$}  at 355 35  
\pinlabel {$b$}  at 392 35  
\pinlabel {$c$}  at 380 01  
\pinlabel {$d$}  at 427 01  
\pinlabel {$q$}  at 445 12  
\pinlabel {$s$}  at 445 37  
\pinlabel {$t$}  at 485 37  

\endlabellist \centering
\includegraphics[width=15cm]{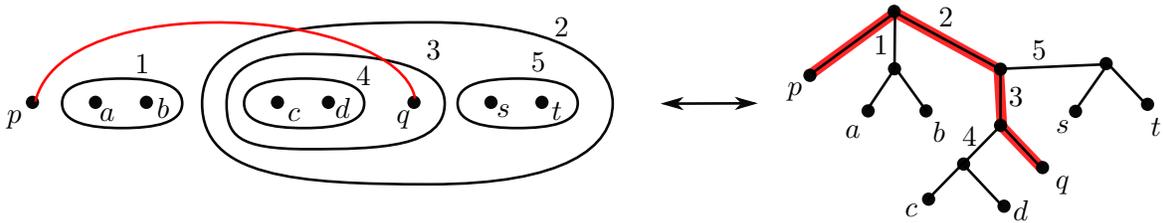}
\centering
\caption{An example of a pants decomposition $P$ and $\Gcal(P)$.}
\label{fig_G(P)}
\end{figure}

The graph $\Gcal(P)$ is a tree with vertices of degree 3 or 1. The leaves (degree 1) correspond to punctures of $\Sigma$. Two leaves having the same neighbor correspond to a 2-punctured disk component in $\Sigma-P$. For any two punctures $p$, $q$, denote by $\lambda_{p,q}\subset \Gcal(P)$ the unique path connecting $p$ and $q$. 

The following lemma encapsulates key properties of $\Gcal(P)$. 

\begin{lemma}\label{lem_4}
Let $P\in \Pcal_c(T)$, $\Gcal(P)$ as above. 
\begin{enumerate}[(i)]
\item Every vertex of $\Gcal(P)$ is visited by at most one path $\lambda_{p,q}$ where $p\sim_T q$. 
\item If $p\sim_T q$ and $r\sim_T s$ are distinct punctures, then the paths $\lambda_{p,q}$ and $\lambda_{r,s}$ are disjoint. 
\item Let $p\sim_T q$. Let $a\subset \Sigma$ be any embedded arc with $\partial a=\{p,q\}$ satisfying the condition: $\forall\alpha \in P$, $a$ intersects $\alpha$ once if $\alpha$ is an edge of $\lambda_{p,q}$, and $a \cap \alpha =\emptyset$ otherwise. Then $a$ is a shadow of an arc of $T$. 
\end{enumerate}
\end{lemma}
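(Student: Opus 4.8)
The plan is to realize the abstract tree $\Gcal(P)$ geometrically inside $B$ and then read all three statements off the position of the strands of $T$ in the resulting ball decomposition. First I would replace the curves of $P$ by a system $\mathcal{D}=\{D_\alpha:\alpha\in P\}$ of pairwise disjoint $c$-disks in $B$ with $\partial D_\alpha=\alpha$ — possible by a standard innermost-disk / outermost-arc argument, since the curves of $P$ are already disjoint in $\Sigma$ — chosen so that $D_\alpha$ is disjoint from $T$ when $\alpha$ bounds a compressing disk and meets $T$ in a single point when $\alpha$ bounds a cut-disk. Adding a small ball $\eta(p)$ around each puncture $p$ and cutting $B$ along $\mathcal{D}$ and the disks $\partial\eta(p)$ decomposes $B$ into $3$-balls: one ball $B_v$ for each pants component $v$ of $\Sigma-P$, and one ball $\eta(p)$ for each puncture. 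By construction the dual graph of this decomposition is exactly $\Gcal(P)$, with the edge dual to $\alpha\in P$ joining the two $B_v$'s across $D_\alpha$ and the edge $\partial\eta(p)$ joining $\eta(p)$ to the $B_v$ having $p$ as one of its ends.

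The key point is that each $B_v$ meets $T$ in either nothing or a single boundary-parallel arc. Indeed, $\partial B_v$ is the pair of pants $v$ capped by three disks, one per boundary circle, and each boundary circle contributes to $B_v$ exactly $0$ endpoints of $T$ (when it is a curve of $P$ bounding a compressing disk), or exactly $1$ endpoint (when it is a curve of $P$ bounding a cut-disk, or when it is a circle $\partial\eta(p)$, in which case the endpoint is $p$). Since $T$ has no closed components this number is even, hence $0$ or $2$, so $T\cap B_v$ is empty or a single properly embedded arc. Triviality of $(B,T)$ passes to the pieces — cutting a trivial tangle along a compressing disk or a cut-disk leaves trivial tangles on both sides — so when $T\cap B_v$ is an arc it is boundary-parallel in $B_v$; the same is clear for the balls $\eta(p)$.

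Given this, (i) and (ii) are immediate. Let $p\sim_T q$ and let $t\subset T$ be the arc with $\partial t=\{p,q\}$, made transverse to $\mathcal{D}$. Following $t$ from $\eta(p)$ to $\eta(q)$ traces a walk in $\Gcal(P)$, and since $t$ meets each ball in at most one subarc this walk repeats no vertex; being a tree, $\Gcal(P)$ therefore forces the walk to be the unique path $\lambda_{p,q}$. Hence a vertex $v$ lies on $\lambda_{p,q}$ exactly when the corresponding ball is traversed by $t$, and because each ball is traversed by at most one arc of $T$, no vertex lies on two of the paths $\{\lambda_{p,q}:p\sim_T q\}$; this is (i), and (ii) is the same statement read for two distinct $T$-pairs. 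For (iii), write $\lambda_{p,q}=(p,v_1,\dots,v_k,q)$ with $\alpha_i\in P$ separating $v_i$ from $v_{i+1}$. The hypothesis on $a$ says that $P$ cuts $a$ into subarcs $a_1,\dots,a_k$ with $a_i\subset v_i$; after pushing $a$ slightly into $B$, each $a_i$ is a properly embedded arc of $B_{v_i}$ with endpoints on precisely the two capping disks of $B_{v_i}$ dual to the two edges of $\lambda_{p,q}$ at $v_i$. By the previous paragraph $t$ traverses the same balls $B_{v_1},\dots,B_{v_k}$, so $T\cap B_{v_i}$ is a boundary-parallel arc of $B_{v_i}$ with endpoints on those same two disks. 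In a $3$-ball, two boundary-parallel arcs joining the same pair of marked boundary disks are ambient isotopic through an isotopy moving endpoints only within those disks; performing such isotopies in $B_{v_1},\dots,B_{v_k}$ in order, and matching them along the shared disks $D_{\alpha_i}$ by propagating the image of the common point $a\cap D_{\alpha_i}$ from $i=1$ onward, assembles to an ambient isotopy of $(B,T)$ carrying the pushed-in copy of $a$ onto $t$. Thus $a$ is a shadow of $T$.

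I expect the main obstacle to be the geometric input behind the two parenthetical claims: that the $c$-disks bounded by the curves of $P$ can be made simultaneously disjoint, and that cutting a trivial tangle along a $c$-disk yields trivial tangles on both sides (so that each piece $T\cap B_v$ is an honest boundary-parallel arc rather than possibly knotted in $B_v$). The only other delicate point is the bookkeeping in (iii) required to glue the per-ball isotopies consistently along the disks $D_{\alpha_i}$; everything else reduces to parity counting and the uniqueness of paths in a tree.
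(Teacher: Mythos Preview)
Your argument is correct and takes a genuinely different route from the paper's. The paper proceeds by induction: it locates an ``outermost'' pair of punctures---two leaves of $\Gcal(P)$ sharing a common neighbor, corresponding to a twice-punctured disk component $v$ of $\Sigma-P$---observes that $\partial v\in P$ must bound a compressing disk (hence $p\sim_T q$), records the obvious shadow inside $v$, then cuts $(B,T)$ along that compressing disk to reduce to a smaller trivial tangle with an induced pants decomposition $P'\in\Pcal_c(T')$, and repeats. This builds a complete collection of shadows for $T$ recursively, and (i)--(ii) are then deduced as consequences of that construction. You instead cut along the full system of $c$-disks at once, realize $\Gcal(P)$ as the dual graph of the resulting ball decomposition of $B$, and read off (i)--(ii) directly from the observation that each ball meets $T$ in at most one boundary-parallel subarc; (iii) follows separately by assembling the per-ball isotopies. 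Your approach gives a cleaner geometric interpretation of $\Gcal(P)$ and an independent, immediate proof of (i), at the cost of requiring the two inputs you flag in full strength: simultaneous disjointness of all the $c$-disks, and triviality-under-cutting for cut-disks as well as compressing disks. The paper's recursion is more economical here, since it only ever cuts along compressing disks (the curves $e_{p,q}$ bound an even number of punctures), so ``triviality passes to pieces'' is needed only in its easy form.
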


\begin{proof}
Part (ii) follows from (i). When proving part (iii), we will build a complete collection of shadows for $T$ using the procedure in Part (iii). This way, cut-curves $\alpha\in P$ will intersect only one shadow of $T$, and curves $\alpha \in P$ bounding compression disks will be disjoint from all shadows. In particular, every edge $\alpha \in V(\Gcal)$ belongs to at most one path $\lambda_{p,q}$ with $p\sim_T q$. Part (i) will follow since the degree of each vertex in $\Gcal$ is at most three. 

We now prove Part (iii), as stated before, we will build a complete collection of shadow arcs for $T$. 
Begin with pairs of leaves $(p,q)$ in $\Gcal(P)$ having a common neighbor $v$. The vertex $v$ corresponds to a twice punctured disk $v$ in $\Sigma-P$ (see Figure \ref{fig_outer_leaves}). Let $e_{p,q}$ be the boundary of $v$. Since $e_{p,q}\in P$, $e_{p,q}$ must bound a compressing disk for $T$ and so $p\sim_T q$. In particular, the path $\lambda_{p,q}$ traverses the edges $p$ and $q$. Moreover, any arc $a$ connecting $p$ and $q$ that built as in Part (iii) stays inside $v$. Thus $a$ is a shadow of $T$. 

\begin{figure}[ht!]
\labellist \small\hair 2pt  
\pinlabel {$x$}  at 01 47 
\pinlabel {$w$}  at 30 55 
\pinlabel {$y$}  at 60 47 
\pinlabel {$e_{p,q}$}  at 40 36
\pinlabel {$v$}  at 26 28
\pinlabel {{\color{red} $\lambda_{p,q}$}}  at 50 20
\pinlabel {$p$}  at 7 5
\pinlabel {$q$}  at 60 5

\pinlabel {$e_{p,q}$}  at 130 55
\pinlabel {{\color{red} $a$}}  at 160 43 
\pinlabel {$p$} at 134 26
\pinlabel {$q$} at 182 26
\pinlabel {$v$} at 160 25

\endlabellist \centering
\includegraphics[width=8cm]{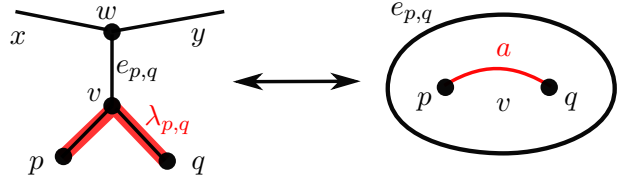}
\centering
\caption{Outer leaves in $\Gcal(P)$ correspond to twice punctured disks.}
\label{fig_outer_leaves}
\end{figure}

Cut $B'$ open along the disk bounded by $e_{p,q}$ to obtain two 3-balls with trivial trivial tangles. Let $(B', T')$ be the $(n-1)$-tangle that does not contain $p$ and $q$, and let $\Sigma'=\partial B'-T'$. The curves in $P':=P-\{e_{p,q}\}$ can be taken to be inside $\Sigma'$. 
Denote by $w$ the other endpoint of the edge $e_{p,q}$ in $\Gcal(P)$ and let $x,y$ be the other two edges with one endpoint equal to $w$ (see Figure \ref{fig_outer_leaves}). Notice that the loops $x$ and $y$ become isotopic in $\Sigma'$. By taking $x=y$, we obtain that $P'$ is a pants decomposition of $\Sigma'$. 
Furthermore, $P'\in \Pcal_c(T')$. Now observe that the graphs $\Gcal(P)$ and $\Gcal(P')$ are related in the following way: $\Gcal(P')$ is the result of removing the edged $\{p,q,e_{p,q}\}$, and replacing the path $\overline{xy}$ with the one edge $x=y$. This way any arc $a\subset \Sigma'$ satisfying the conditions in Part (iii) can be `lifted' to an arc $a\subset \Sigma$ also satisfying the conditions in Part (iii). We can then proceed recursively with $P'$ instead of $P$ to create a complete collection of shadows for $T$. 
\end{proof}


\begin{proposition}\label{lem_5}
Let $P\in \Pcal_c({T})$ and let $p\sim_T q$ be two punctures. Suppose that $\gamma$ is a simple closed curve satisfying $x\in P$, $|\gamma\cap x|\leq 2$, $\forall x\in P$. Then there exists a shadow $a$ for $T$ connecting $p$ and $q$ such that, 
\begin{enumerate}[(i)]
\item if $\gamma$ separates $p$ and $q$, then $|a\cap \gamma|=1$;
\item if $\gamma$ does not separate $p$ and $q$, then $|a\cap \gamma|=0$.
\end{enumerate}
\end{proposition}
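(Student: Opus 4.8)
### Proof proposal for Proposition \ref{lem_5}

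\textbf{Setup and strategy.} The plan is to build an explicit shadow arc $a$ for the fixed pair $p \sim_T q$ by choosing $a$ to be (isotopic to) the ``path arc'' dictated by the tree $\Gcal(P)$, i.e. the arc that meets exactly those curves of $P$ which are edges of $\lambda_{p,q} \subset \Gcal(P)$ and is disjoint from every other curve of $P$. By Part (iii) of Lemma \ref{lem_4}, any such arc is automatically a shadow of $T$; so the only thing left to prove is the intersection bound $|a \cap \gamma| \leq 1$, with the parity governed by whether $\gamma$ separates $p$ from $q$. The hypothesis $|\gamma \cap x| \leq 2$ for all $x \in P$ is what makes this controllable, and the work is essentially a ``minimal position'' argument for $\gamma$ against the pants decomposition $P$.

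\textbf{Key steps.} First I would fix, once and for all, representatives so that $\gamma$ is in minimal position with respect to the multicurve $P$; since each $|\gamma \cap x| \le 2$, the curve $\gamma$ enters and leaves each pair-of-pants complementary region of $\Sigma - P$ in a controlled way. Next I would analyze how $\gamma$ sits relative to the tree $\Gcal(P)$: label each arc of $\gamma \cap (\Sigma - P)$ by the pants region it lies in, and each intersection point of $\gamma$ with a curve $x\in P$ by the corresponding edge of $\Gcal(P)$; this produces a closed edge-path (a cycle, with backtracking allowed) in the tree $\Gcal(P)$. The crucial observation is that a cycle in a tree must backtrack, so the ``net'' crossing number of $\gamma$ across any edge $x$ of $\Gcal(P)$ — counted with sign according to which side of $x$ one is on — is zero; hence the geometric intersection number $|\gamma \cap x|$ is even, in fact $0$ or $2$ given the hypothesis. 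Now build the candidate shadow $a$ running along $\lambda_{p,q}$ and push it to meet each edge $x$ of $\lambda_{p,q}$ exactly once, transversally, near a point where $\gamma$ does \emph{not} cross (possible since $|\gamma \cap x| \le 2$ leaves room, and if $|\gamma\cap x| = 2$ the two points of $\gamma \cap x$ cut $x$ into two subarcs, one of which is disjoint from the rest of $P\cup\gamma$ locally). Then $a$ is disjoint from $\gamma$ in a neighborhood of every curve of $P$, so $a \cap \gamma$ is contained in the pants regions traversed by $\lambda_{p,q}$; inside each such pair-of-pants, $a$ is a single embedded arc with endpoints on the boundary, and $\gamma$ meets it in at most the number of ``essential'' arcs of $\gamma$ crossing that pants region in the relevant way.

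\textbf{Finishing and main obstacle.} To pin down the count to exactly $0$ or $1$: consider the sphere-with-marked-points $\Sigma$ cut along $a$. Since $a$ is a shadow, isotope $a$ onto $\partial B$ and collapse it; whether $\gamma$ meets $a$ an even or odd number of times is a mod-$2$ linking/separation invariant — precisely, $|a\cap\gamma| \bmod 2$ equals $1$ iff the endpoints $p,q$ of $a$ lie in different components of $\Sigma - \gamma$, which is the dichotomy in (i) vs (ii). So it remains only to rule out $|a\cap\gamma| \ge 2$. For this I would use minimality of $a$ in its isotopy class rel $\{p,q\}$: if $|a\cap\gamma|\ge 2$ then either two consecutive intersection points along $a$ cut off a disk (a bigon) with an arc of $\gamma$, which can be removed by an isotopy of $a$ keeping all the $|a\cap x|\le 1$ conditions — here is where one must check carefully that the bigon can be chosen disjoint from $P$, using that $\gamma$ is in minimal position with $P$ — or else $\gamma$ and $a$ are forced into a configuration contradicting $|\gamma\cap x|\le 2$ for some $x\in P$ on $\lambda_{p,q}$. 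The main obstacle is exactly this bigon-removal step: one has to juggle minimality of $\gamma$ against $P$ and of $a$ against $\gamma$ simultaneously without destroying the tree-path description of $a$, and I expect the cleanest route is an innermost-bigon argument phrased entirely in the complement $\Sigma - P$ region by region, invoking Lemma \ref{lem_4}(iii) afresh after each isotopy to confirm the modified $a$ is still a shadow.
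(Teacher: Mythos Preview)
Your setup matches the paper's exactly: build $a$ along the tree path $\lambda_{p,q}$ using Lemma \ref{lem_4}(iii), put $\gamma$ in minimal position with $P$, and work pants-by-pants. The gap is in your finishing step. The claim that $|a\cap\gamma|\ge 2$ forces a removable bigon is not justified: consecutive intersections of $a$ and $\gamma$ along $a$ cobound a disk in the sphere, but that disk may contain punctures of $\Sigma$, in which case the isotopy across it changes the rel-endpoints isotopy class of $a$ and destroys the shadow property. You flag exactly this as ``the main obstacle'' and propose to find a bigon disjoint from $P$, but you give no argument that one exists; there is no soft reason it should, since the hypothesis $|\gamma\cap x|\le 2$ constrains $\gamma$ against $P$, not $\gamma$ against $a$. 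The alternative you offer (``or else $\gamma$ and $a$ are forced into a configuration contradicting $|\gamma\cap x|\le 2$'') is not an argument either---it is a hope.

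The paper sidesteps this by never doing an a posteriori isotopy. Instead it labels each edge $x_k$ of $\lambda_{p,q}$ by $p$, $q$, or $0$ according to which side of $\gamma$ the curve $x_k$ lies on (with label $0$ when $|x_k\cap\gamma|=2$), and proves a monotonicity claim: along $\lambda_{p,q}$ the labels read $p,\dots,p,0,\dots,0,q,\dots,q$. This localizes the side-change to a single pair of pants $v_{j-1}$. Then in each pants $v_l$ the constraint $|\gamma\cap\partial v_l|\le 6$ leaves only finitely many patterns for $\gamma\cap v_l$; the paper enumerates them (Figure \ref{fig_vk}) and exhibits, in every non-transitional pants, a sub-arc from $x_l$ to $x_{l+1}$ disjoint from $\gamma$, and in the one transitional pants a sub-arc meeting $\gamma$ exactly once. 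Gluing these local pieces yields $a$ with the required intersection directly. The labeling/monotonicity step is the missing idea: it converts your global bigon search into a finite local case analysis that the hypothesis actually controls.
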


\begin{proof}[Proof of Proposition \ref{lem_5}]
Consider the graph $\Gcal(P)$ and the path $\lambda_{p,q}$. 
Denote the edges of $\lambda_{p,q}$ by $p=x_0, x_1,\dots, x_n=q$ and the vertices by $v_i=\partial x_i \cap \partial x_{i+1}$. For each $1\leq k\leq n$, let $F_i=v_0\cup v_1 \cup \dots \cup v_k$ be the planar subsurface of $\Sigma$ traced by the first $k$ pairs of pants in the path $\lambda_{p,q}$. 
Label each edge $x_k$ of $\lambda_{p,q}$ with $p$, $q$ or $0$, depending if $x_k$ is in the side of $p$, $q$ or if $|x_k\cap \gamma|=2$. If $\gamma$ does not separate $p$ and $q$, the labels $p$ and $q$ are the same. 
Observe that $x_0=\partial\eta(p)$ and $x_n=\partial\eta(q)$ have labels $p$ and $q$, respectively. 
\\
\textbf{Claim 1.} $x_k$ separates $p$ and $x_{k+1}$.

By definition of $F_n$, $p$ and $x_{k+1}$ lie in distinct components of $F_n-x_k$. Since $F_n\subset \Sigma$ are planar surfaces, $p$ and $x_{k+1}$ lie in distinct components of $\Sigma-x_{k+1}$. \\
\textbf{Claim 2.} If $x_k$ has label $q$, then $x_{k+1}$ has label $q$.

Suppose now that $x_k$ has label $q$. In particular, $\gamma$ and $x_k$ are disjoint. 
Claim 1 implies that $q$ and $x_{k+1}$ lie in the same side of $x_k$. All this is occurring inside a planar surface, so $q$ and $x_{k+1}$ must be lie in the same side of $\gamma$. Hence, $x_{k+1}$ is labeled $q$. 

By considering the path $\overline{\lambda}$ in the opposite direction, we obtain that if $x_{k+1}$ has label $p$, then $x_k$ has label $p$. 
Thus, along $\lambda$, once the label $p$ changes (resp. $q$ appears), it does not appear again (resp. change).
Hence, there exist indices $0\leq i\leq j\leq n$ such that 
the label of $x_l$ is $p$, $0$, and $q$ for $l$ in the intervals $[0,i]$, $(i,j)$, and $[j,n]$, respectively. 
We now choose the shadow arc $a$ using the procedure in Part (iii) of Lemma \ref{lem_4}. By the lemma, such arc will be a shadow for the tangle $T$ so it is enough to guarantee the intersection number $|\gamma \cap a|$.

Suppose first that $p$ and $q$ lie in the same side of $\gamma$. The labels $p$ and $q$ are equal so, by Claim 2, all labels in $\lambda$ are equal to $p$. Let $0<l<n$ and let $y\in P$ be the simple closed curve so that $\partial v_l = y \cup x_l\cup x_{l+1}$. We know that $|\gamma\cap y|\leq 2$ and $\gamma$ is disjoint from $x_l$ and $x_{l+1}$. 
Thus, $v_l \cap \gamma$ is either empty or an essential embedded arc in $v_l$ with both boundaries in $y$ (see Figure \ref{fig_vk} (e)-(f)). The latter forces $\gamma$ to separate $p$ and $q$, so it cannot occur. Thus, every $v_l$ is disjoint from $\gamma$ and any arc $a$ built as in Lemma \ref{lem_4} will be disjoint from $\gamma$. 

Suppose now that $\gamma$ separates $p$ and $q$. We focus on the pair of pants $v_l$ for each $0\leq l<n$. 
We know that $\gamma$ intersects every curve in $\partial v_l \subset P$ at most twice. Thus, up to a surface homeomorphism, the intersection $\gamma \cap v_l$ can be depicted as in Figure \ref{fig_vk}. If $l<j-1$, the intersection pattern cannot be as in Figure \ref{fig_vk} (f). Observe that in all the other cases, we can find an arc $a$ (in blue) connecting $x_l$ and $x_{l+1}$ that is disjoint from $\gamma$. Similarly, for $j-1<l$, we can find an arc $a$ in $v_l$ disjoint from $\gamma$. We now study the pants $v_{j-1}$. Since $x_{j-1}$ and $x_j$ lie in distinct sides of $\gamma$, $v_{j-1}$ must look as in Figure \ref{fig_vk} (f). Here, there is an arc $a\subset v_{j-1}$ intersecting $\gamma$ exactly once. By gluing all the arcs discussed above, we build an arc connecting $p$ and $q$ that intersects $\gamma$ once. 
\begin{figure}[ht!]
\labellist \small\hair 2pt  

\pinlabel {(a)} at 01 125
\pinlabel {$l$} at 22 99

\pinlabel {(b)} at 115 125
\pinlabel {$l$} at 134 99

\pinlabel {(c)} at 225 125
\pinlabel {$l$} at 245 99

\pinlabel {(d)} at 01 55
\pinlabel {$l$} at 22 26

\pinlabel {(e)} at 115 55
\pinlabel {$l$} at 134 26

\pinlabel {(f)} at 225 55
\pinlabel {$l$} at 245 26

\scriptsize 
\pinlabel {$l+1$} at 183 26
\pinlabel {$l+1$} at 70 26
\pinlabel {$l+1$} at 293 26
\pinlabel {$l+1$} at 183 99
\pinlabel {$l+1$} at 70 99
\pinlabel {$l+1$} at 293 99

\endlabellist \centering
\includegraphics[width=12cm]{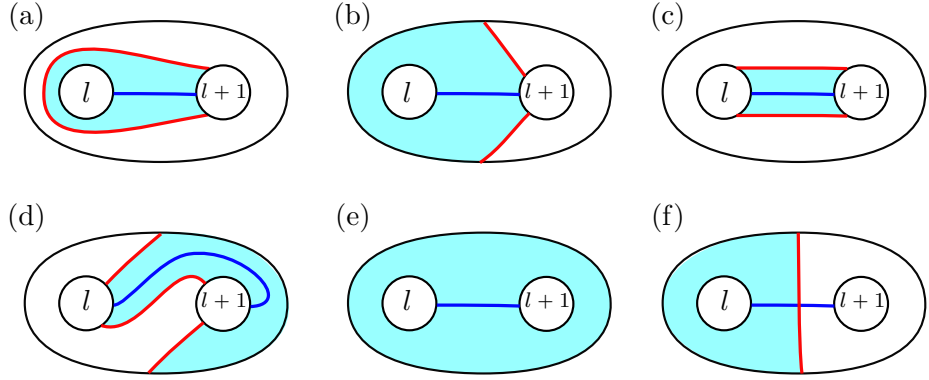}
\centering
\caption{The pair of pants $v_l$. The red arcs correspond to $\gamma \cap v_l$.}
\label{fig_vk}
\end{figure}
\end{proof}


\begin{proposition} \label{lem_3}
Let $\gamma\subset \Sigma$ be an essential simple closed curve and $P\in \Pcal_c(T)$. Suppose that $|\gamma\cap x|\leq 2$ for all $x\in P$. If every pair of punctures $p\sim_T q$ belong to the same side of $\gamma$, then $\gamma$ bounds a compressing disk for $T$. 
\end{proposition}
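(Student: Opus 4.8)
The strategy is to use Proposition \ref{lem_5} as a black box to produce, for every tangle arc of $T$, a shadow disjoint from $\gamma$, and then assemble these shadows into a complete system of shadows that is disjoint from $\gamma$; a curve disjoint from a complete collection of shadows on $\Sigma$ bounds a compressing disk. Concretely, enumerate the arcs of $T$ by their endpoint pairs $p_1\sim_T q_1,\dots,p_b\sim_T q_b$. The hypothesis says that for each $k$, the punctures $p_k$ and $q_k$ lie on the same side of $\gamma$, so case (ii) of Proposition \ref{lem_5} applies and yields a shadow $a_k$ for the corresponding arc of $T$ with $|a_k\cap\gamma|=0$.

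The subtlety is that Proposition \ref{lem_5} produces the $a_k$ one arc at a time, so a priori different $a_k$ need not be disjoint from one another, and hence they need not form a valid (complete, pairwise disjoint) system of shadows. To fix this I would appeal to the explicit construction underlying Proposition \ref{lem_5} (and Lemma \ref{lem_4}(iii)): each $a_k$ is built inside the union of pairs of pants along the path $\lambda_{p_k,q_k}\subset\Gcal(P)$. By Lemma \ref{lem_4}(i)--(ii), the paths $\lambda_{p_k,q_k}$ for distinct $k$ are pairwise disjoint in $\Gcal(P)$, so the subsurfaces of $\Sigma$ they trace out are disjoint (they meet $P$-curves, i.e.\ edges of $\Gcal$, that belong to no other $\lambda_{p_j,q_j}$). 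Hence the arcs $a_k$, being supported in these disjoint subsurfaces, are automatically pairwise disjoint, and together they form a complete collection of shadows for $T$ — this is exactly the inductive shadow-building scheme used in the proof of Lemma \ref{lem_4}(iii), now carried out with the additional constraint $|a_k\cap\gamma|=0$ guaranteed at each stage by Proposition \ref{lem_5}(ii).

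With a complete system of shadows $a_1,\dots,a_b$ for $T$ all disjoint from $\gamma$, finish as follows. A complete collection of shadows cuts $\partial B=\Sigma$ (capped along the shadows) into disks, so the complement in $B$ of a neighborhood of $T$ is a handlebody in which the $\partial$-parallel copies of the $a_k$ form a defining set of meridian disks; equivalently, $\gamma$ being disjoint from all the $a_k$ means $\gamma$ lies in the four-punctured-free part of $\Sigma$ and can be pushed off the bridge disks, so it bounds an embedded disk in $B$ meeting $T$ in no points. Since $\gamma$ is essential in $\Sigma$ by hypothesis, this disk is a compressing disk for $T$, which is the claim.

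I expect the main obstacle to be the second paragraph: one must be careful that the arc produced by Proposition \ref{lem_5} really can be taken inside the subsurface traced by $\lambda_{p_k,q_k}$ and that these subsurfaces are genuinely disjoint (not merely the $\Gcal$-paths), so that no unintended intersections among the $a_k$ are created; making this compatibility precise — rather than re-running the whole induction of Lemma \ref{lem_4} from scratch — is where the real work lies. The final handlebody/compressing-disk step is standard once disjointness is in hand.
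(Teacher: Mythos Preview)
Your proposal is correct and follows essentially the same approach as the paper. The paper's proof is a single sentence---``By Proposition \ref{lem_5}, every arc of $T$ has a shadow disjoint from $\gamma$''---which implicitly relies on the fact (visible in the proof of Proposition \ref{lem_5}, case (ii)) that \emph{any} arc built via the recursive procedure of Lemma \ref{lem_4}(iii) is disjoint from $\gamma$, and that procedure already produces a complete pairwise-disjoint system of shadows; you have simply unpacked this implicit step and made the disjointness argument explicit.
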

\begin{proof}
By Proposition \ref{lem_5}, every arc of $T$ has a shadow disjoint from $\gamma$. 
\end{proof}


\subsubsection*{Short paths in $\Pcal(\Sigma)$}

\begin{lemma}\label{lem_6}
Let $P$ be a pants decomposition for $\Sigma$ and let $x,y,\gamma\subset \Sigma$ be essential simple closed curves satisfying $|\gamma \cap z|\leq 2, \forall z\in P$. Suppose $x\in P$ and $P\mapsto (P-\{x\})\cup\{y\}$ is an A-move in $\Pcal(\Sigma)$. Then, $|y\cap \gamma|\leq 2$. 
\end{lemma}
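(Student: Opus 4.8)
The plan is to analyze the local picture of the A-move inside $\Sigma$ and count intersection points of $\gamma$ with the relevant curves. Recall that an A-move removes a curve $x\in P$ and replaces it with a curve $y$ satisfying $|x\cap y|=2$, where $x$ and $y$ live in the (unique) four-holed sphere or one-holed torus component $Q$ of $\Sigma - (P-\{x\})$ in which $x$ is the essential curve. Since $\gamma$ meets every curve of $P$ at most twice, it in particular meets each of the (at most four) boundary curves of $Q$ at most twice, so $\gamma\cap Q$ is a disjoint union of arcs, each of which is either essential in $Q$ or boundary-parallel; after an isotopy of $\gamma$ rel the boundary of $Q$ we may discard the boundary-parallel ones without changing any intersection number with curves of $P$ or affecting whether the lemma's conclusion holds. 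So we reduce to the case where $\gamma\cap Q$ is a collection of essential arcs in $Q$ with a total of at most $2\cdot(\#\partial Q)$ endpoints on $\partial Q$, and we must bound $|y\cap(\gamma\cap Q)|$ by $2$, since $y$ is contained in $Q$ and $y$ is disjoint from $\gamma$ outside $Q$.

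The key step is the combinatorial geometry inside $Q$. The standard fact (from the theory of the Farey graph / curve complex of the four-holed sphere and one-holed torus) is that if $x$ and $y$ are two essential curves in $Q$ with $|x\cap y|=2$ (the one-holed torus case) or $|x\cap y|=2$ as well, realized by the minimal position for an A-move, then $x\cup y$ cuts $Q$ into pieces in a controlled way, and any essential arc $\delta$ in $Q$ with $|\delta\cap x|\le 1$ and endpoints on $\partial Q$ satisfies $|\delta\cap y|\le 1$. One then applies this to each component arc $\delta$ of $\gamma\cap Q$: since $\gamma$ already meets $x$ at most twice in total, and each essential arc component of $\gamma\cap Q$ meets $x$ at most... here one has to be slightly careful, because a single arc of $\gamma\cap Q$ could meet $x$ twice while other arcs meet it zero times. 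The cleanest route is to not split arc-by-arc but to use that $x\cup y$ bounds a neighborhood whose complement in $Q$ is a union of disks-with-holes, and to push $\gamma$ off $x\cup y$ except for forced intersections; since $\gamma$ is disjoint from $\partial Q$ after the reduction and meets $x$ at most twice, minimal-position/bigon arguments in the closed-up surface $\widehat{Q}$ (capping the boundary circles with once-punctured disks) give $|y\cap\gamma|\le |x\cap\gamma|\le 2$, because $x$ and $y$ are related by a single Dehn twist or half-twist supported near $x$ in $\widehat Q$, and such a twist changes geometric intersection number with a fixed curve $\gamma$ by at most $|x\cap\gamma|\cdot\lceil \text{(twisting amount)}\rceil$, which here is at most $|x\cap\gamma|\le 2$.

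I expect the main obstacle to be making the last sentence rigorous, i.e. pinning down exactly which homeomorphism of $Q$ carries $x$ to $y$ in an A-move and getting the sharp constant $2$ rather than a weaker bound like $2|x\cap\gamma|=4$. The honest approach is: realize $x,y,\gamma$ simultaneously in minimal position in $Q$; observe that $y$, being disjoint from all of $P-\{x\}$ and satisfying $|y\cap x|=2$, is isotopic in $Q$ to a band-sum of $x$ with one of the boundary components of $Q$ (this is precisely the combinatorial content of an A-move on the four-holed sphere, and in the one-holed torus case $y$ is the image of $x$ under a single Dehn twist about the curve dual to $x$). In either description $y$ is built from $x$ and arcs lying in $\partial Q\cup(\text{a collar})$, so $|y\cap\gamma|\le|x\cap\gamma|+ |(\text{collar arcs})\cap\gamma|$, and the collar arcs can be isotoped off $\gamma$ since $\gamma$ has been isotoped off $\partial Q$. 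Hence $|y\cap\gamma|\le|x\cap\gamma|\le 2$, which is the claim. I would write this up by first doing the four-holed sphere case carefully with an explicit picture of the band-sum, then noting the one-holed torus case is the Dehn-twist statement and is classical.
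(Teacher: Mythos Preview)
Your approach has a genuine gap in the description of $y$. You claim that in the four-holed sphere $Q$ the curve $y$ ``is isotopic to a band-sum of $x$ with one of the boundary components of $Q$,'' but this is not what an A-move produces. Essential curves in a four-holed sphere are parametrized by slopes in $\mathbb{Q}\cup\{\infty\}$, and the curves $y$ with $|x\cap y|=2$ are exactly the (infinitely many) Farey neighbors of the slope of $x$; a band-sum of $x$ with a boundary circle, by contrast, is isotopic to $x$ itself once the boundary is capped. So the inequality $|y\cap\gamma|\le|x\cap\gamma|+|(\text{collar arcs})\cap\gamma|$ has no footing, and the subsequent step of ``isotoping the collar arcs off $\gamma$'' does not make sense as written. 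The earlier Dehn-twist heuristic has the same defect: a half-twist taking $x$ to $y$ is supported on an essential curve of $Q$, not in a collar of $\partial Q$, so there is no reason it should move $\gamma$ only a bounded amount. (A side remark: the one-holed torus case you discuss is irrelevant here, since in this paper $\Sigma$ is a punctured sphere and every A-move takes place in a four-holed sphere.)

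The paper's argument avoids all of this by analyzing $\gamma$ rather than $y$. The hypothesis $|\gamma\cap z|\le 2$ for every $z\in P$ constrains how $\gamma$ can sit inside $E$: each of the four boundary circles of $E$ meets $\gamma$ at most twice, and $x$ meets $\gamma$ at most twice, so the paper asserts that $\gamma\cap E$ consists of at most two parallel properly embedded arcs with the same ``slope'' as $x$. Once $\gamma$ and $x$ are parallel in $E$, any curve $y$ with $|x\cap y|=2$ automatically satisfies $|\gamma\cap y|\le 2$, and the proof is a single picture. The leverage comes from the constraints already imposed on $\gamma$ by the entire pants decomposition $P$, not from a structural description of $y$; you were attacking the wrong object.
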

\begin{proof} 
Let $E$ be the 4-holed sphere where the A-move occurs. If $\gamma \cap E=\emptyset$, then $\gamma \cap y=\emptyset$. If not, then by assumption $\gamma \cap E$ is (at most) two parallel properly embedded arcs in $E$ disjoint from $x$ (see Figure \ref{fig_lem_6}). In the 4-holed sphere, the condition $|x\cap y|=2$ forces $|\gamma \cap y|=2$. 
\begin{figure}[ht!]
\labellist \large\hair 2pt 
\pinlabel {$x$}  at -5 20 
\pinlabel {{\color{red}$\gamma$}}  at 90 5 
\pinlabel {$x$}  at 113 20 
\pinlabel {{\color{red}$\gamma$}}  at 200 40 
\endlabellist \centering
\includegraphics[width=8cm]{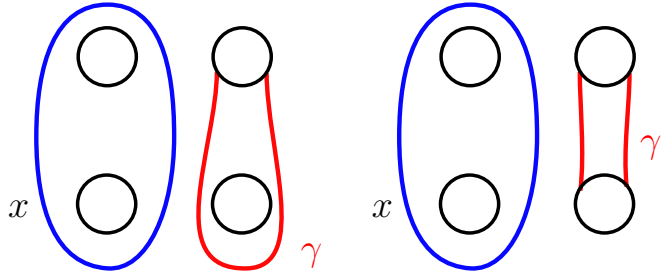}
\centering
\caption{We can think that $x$ and $\gamma$ have the same `slope' in the 4-holed sphere $E$.}
\label{fig_lem_6}
\end{figure}
\end{proof}

\begin{proposition}\label{lem_7}
Let $P_1=\gamma \cup f$ and $P_2=\psi \cup f$ be two pants decompositions for some sets of curves $\gamma, \psi, f$. Suppose $\lambda$ is a path in $\Pcal(\Sigma)$ connecting $P_1$ and $P_2$ such that each curve in $\gamma$ moves exactly once. Then, $|x\cap y|\leq 2$ for all $x\in \gamma$ and $y\in \psi$ 
\end{proposition}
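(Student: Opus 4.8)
The plan is to track a single curve $x\in\gamma$ along the path $\lambda$ and apply Lemma \ref{lem_6} repeatedly, but there is a subtlety: the hypothesis of Lemma \ref{lem_6} requires that \emph{every} curve of the current pants decomposition meet $x$ at most twice, so I cannot simply watch one curve of $\psi$ while the rest of the configuration changes arbitrarily. Instead I will fix $x\in\gamma$, let $x'$ be the curve that $x$ becomes after its (unique) A-move along $\lambda$, and prove the stronger statement that along the entire path $\lambda$ every curve of every intermediate pants decomposition meets $x$ (before its move) at most twice, and meets $x'$ (after its move) at most twice. Since $x$ is never moved after the step where it becomes $x'$, and each $y\in\psi$ is a curve of the terminal decomposition $P_2$, this will give $|x'\cap y|\le 2$, i.e. $|x\cap y|\le 2$ after relabelling; running the same argument for each $x\in\gamma$ yields the proposition.

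First I would set up notation: write $\lambda = (P_1 = Q_0, Q_1,\dots, Q_m = P_2)$ where $Q_{t}\mapsto Q_{t+1}$ is an A-move replacing a curve $a_t$ by a curve $b_t$ with $|a_t\cap b_t|=2$. Fix $x\in\gamma$; by hypothesis there is a unique index $t_0$ at which $x$ is moved, so $x\in Q_t$ for $t\le t_0$, then $x' := b_{t_0}\in Q_t$ for all $t> t_0$, and $x'$ is never moved again (otherwise $x$'s ``slot'' would move twice, contradicting the hypothesis — here I should note that ``each curve in $\gamma$ moves exactly once'' is naturally read as: the path uses exactly $|\gamma|$ A-moves, the ones that turn the $\gamma$-curves into the $\psi$-curves, so no curve is moved and then moved back). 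Then I run two induction arguments. For $t\le t_0$: I claim every $z\in Q_t$ satisfies $|z\cap x|\le 2$. This is vacuous at $t=0$ since $x\in Q_0=P_1$ and distinct pants curves are disjoint. For the inductive step, $Q_{t+1}$ is obtained from $Q_t$ by one A-move not touching $x$ (as $t+1\le t_0$), so the new curve $b_t$ satisfies $|b_t\cap x|\le 2$ by Lemma \ref{lem_6} applied with $\gamma := x$, $P:=Q_t$, and the old curves are unchanged. Symmetrically, running $\lambda$ backwards from $P_2$, for $t> t_0$ every $z\in Q_t$ satisfies $|z\cap x'|\le 2$: at $t=m$ this holds since $x'\in Q_m$; the backward A-moves for $t>t_0$ never touch $x'$, so Lemma \ref{lem_6} (with $\gamma := x'$) propagates the bound.

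Finally, combine: each $y\in\psi$ lies in $P_2 = Q_m$ with $m> t_0$, so $|x'\cap y|\le 2$; since $x'$ is just the image of $x$, this is the desired bound $|x\cap y|\le 2$. Repeating over all $x\in\gamma$ completes the proof.

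The main obstacle I anticipate is purely bookkeeping: making precise the claim that $x$'s A-move happens exactly once and that $x'$ is thereafter untouched, i.e. correctly interpreting ``each curve in $\gamma$ moves exactly once'' so that the backward induction from $P_2$ is legitimate. Once that is pinned down, Lemma \ref{lem_6} does all the geometric work and the rest is a two-sided induction along $\lambda$.
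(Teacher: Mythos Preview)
Your two-sided induction has a genuine gap in the ``combine'' step. You prove that for $t>t_0$ every $z\in Q_t$ satisfies $|z\cap x'|\le 2$, and then conclude $|x'\cap y|\le 2$ for $y\in\psi$. But $x'\in\psi$ (it is the curve $x$ became, and the path lands in $\psi\cup f$), so $|x'\cap y|=0$ is trivial: both curves lie in the pants decomposition $P_2$. The sentence ``since $x'$ is just the image of $x$, this is the desired bound $|x\cap y|\le 2$ after relabelling'' is simply false: $x$ and $x'$ are distinct curves with $|x\cap x'|=2$, and there is no relabelling that turns a statement about $x'$ into one about $x$. The backward induction is a dead end.

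The fix is easy and in fact makes the argument shorter: drop the backward induction entirely and continue the forward induction past $t_0$. Lemma~\ref{lem_6} does not require the test curve $\gamma$ to belong to $P$; it only requires $|\gamma\cap z|\le 2$ for all $z\in P$. At step $t_0$ the move replaces $x$ by $x'$ with $|x\cap x'|=2$, so every $z\in Q_{t_0+1}$ still meets $x$ at most twice. For $t>t_0$, apply Lemma~\ref{lem_6} with $\gamma:=x$ to the move $Q_t\to Q_{t+1}$: the inductive hypothesis $|x\cap z|\le 2$ for all $z\in Q_t$ gives $|x\cap b_t|\le 2$. Running this to $t=m$ yields $|x\cap y|\le 2$ for every $y\in P_2$, in particular for $y\in\psi$. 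This is essentially the paper's argument, which is organized instead as induction on the length of $\lambda$ (using the strong inductive hypothesis on subpaths to verify the hypothesis of Lemma~\ref{lem_6} at the last step); your per-curve forward induction is a clean alternative once the backward half is discarded.
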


\begin{proof}
We prove this by induction on the length of $\lambda$. The base case, $|\lambda|=1$, is the statement of Lemma \ref{lem_6}. Suppose that the statement is true for all paths of length at most $n\geq 1$, and let $\lambda$ be a path of length $n+1$. Label the curves in $\gamma$ and $\psi$ so that $\gamma_k \mapsto \psi_k$ is the $k$-th A-move in $\lambda$. By the strong inductive hypothesis, $|\gamma_i \cap \psi_j|\leq 2$ for all $(i,j)\neq (1,n+1)$. Let $P'=\left(P_2-\{\psi_{n+1}\}\right) \cup \{\gamma_{n+1}\}$ be the $n$-th pants decomposition in $\lambda$. As stated before, we knot that $\gamma_1$, $x=\gamma_{n+1}$, and $P'$ satisfy the hypothesis of Lemma \ref{lem_6}. Thus, by the lemma, $|\gamma_1 \cap \psi_{n+1}|\leq 2$. 
\end{proof}

\subsubsection*{Edges in $\Pcal_c(T)$}

\begin{lemma}\label{lem:gamma1_moves_first}
Let $T$ be a $b$-string tangle and let $P\in \Pcal_c(T)$ be a pants decomposition with $b-1$ even curves. There is no edge $e:x\mapsto y$ in $C^*(\Sigma)$ starting at $P$ such that $x\in P$ is even and $y$ bounds a c-disk for $T$. 
\end{lemma}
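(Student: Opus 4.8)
The plan is a proof by contradiction. Suppose such an edge $e\colon x\mapsto y$ exists and set $P'=(P\setminus\{x\})\cup\{y\}$. The curves of $P\setminus\{x\}$ bound $c$-disks for $T$ since $P\in\Pcal_c(T)$, and $y$ does by hypothesis, so $P'\in\Pcal_c(T)$; in particular $P'$ is a pants decomposition of $\Sigma=\Sigma_{0,2b}$. First I would pin down the parity of $y$. By Lemma~\ref{lem_0}(3), $P'$ has at least $b-1$ even curves; but $P\setminus\{x\}$ has only $b-2$ even curves (we deleted the even curve $x$ from the extremal decomposition $P$), so $y$ is even. An even curve bounding a $c$-disk must bound a \emph{compressing} disk, because the boundary of a cut-disk separates an odd number of punctures to each side. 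Thus $x$ and $y$ bound compressing disks $D_x,D_y$ for $T$. Finally, since $y\neq x$ is disjoint from $P\setminus\{x\}$ but $P'$ is still a pants decomposition, $y$ cannot lie inside a pair of pants of $\Sigma\setminus P$ (such a curve is inessential in $\Sigma$ or isotopic to a curve of $P$); hence $y$ lies in the $4$-holed sphere $E=v\cup_x v'$ formed by the two pants of $\Sigma\setminus P$ adjacent to $x$, and, after isotoping to minimal position, $y$ meets $x$.

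The core is a disk-surgery argument. Making $D_x,D_y$ transverse and removing closed components of $D_x\cap D_y$ by innermost-disk surgery, I may assume $D_x\cap D_y$ is a nonempty union of arcs (nonempty because $x\cap y\neq\emptyset$). An outermost such arc $\delta$ on $D_y$ cuts off a subdisk $\Delta\subset D_y$ with $\operatorname{int}\Delta\cap D_x=\emptyset$ and $\Delta\cap T=\emptyset$, bounded by $\delta$ and a subarc $\beta$ of $y$ whose endpoints lie on $x$; outermost-ness also forces $\operatorname{int}\beta\cap x=\emptyset$. Surgering $D_x$ along $\Delta$ then yields two disks in $B$ disjoint from $T$ whose boundaries $x',x''$ are the two curves obtained from $x$ by the band move along $\beta$. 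Since $\beta$ is disjoint from $x$ off its endpoints, it is a properly embedded arc in one of the two pants, say $v$, with both endpoints on $x$; minimal position rules out a bigon, so $\beta$ is essential in $v$, hence separates the other two boundary components $a,b$ of $v$. Cutting $v$ along $\beta$ produces two annuli, so $x'$ and $x''$ are isotopic to $a$ and to $b$ respectively. Thus one of the surgered disks is a disk in $B$, disjoint from $T$, with boundary isotopic to $a$, and likewise for $b$.

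To finish I would use the extremality of $P$. A short count shows that when $P$ has the minimum number $b-1$ of even curves, every pair of pants of $\Sigma\setminus P$ has exactly one boundary that is an even curve (equivalently, in $\Gcal(P)$ every internal vertex is incident to exactly one even-curve edge): for a pants $Q$ the ``enclosed'' puncture-counts of its three boundary curves/punctures sum to $2b$, so the number of them that are odd curves or punctures is $0$ or $2$, and summing $2(b-2)+2b=4b-4=2(2b-2)$ over the $2b-2$ pants (each of the $b-2$ odd curves counted twice, each of the $2b$ punctures once) forces the value $2$ everywhere. Applied to $v$, whose even boundary is $x$, this says $a$ and $b$ are each an odd curve or a puncture. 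If $a$ is an odd curve, the disk from the previous step with boundary isotopic to $a$ is a compressing disk for $a$, forcing an even number of punctures on each side of $a$ — contrary to $a$ being odd. If $a$ is a puncture, that disk together with a once-punctured disk around $a$ cuts off a ball disjoint from $T$, which the arc of $T$ ending at $a$ cannot escape — again impossible. This contradiction proves the lemma.

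I expect the main work to be the bookkeeping of the surgery: verifying that the outermost arc $\beta$ meets $x$ only at its endpoints (so the surgered curves are embedded) and that it is essential in $v$, and making the parity/extremality count precise. The topological ingredients — lowering a cut-disk's parity, surgering compressing disks, and capping off a puncture — are routine.
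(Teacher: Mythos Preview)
Your argument is correct, and it takes a genuinely different route from the paper's proof. Both proofs begin identically: by the extremality of $P$ and Lemma~\ref{lem_0}, the new curve $y$ must be even and hence bounds a compressing disk, and the four boundaries of the $4$-holed sphere $E$ (equivalently, the two non-$x$ boundaries of each adjacent pants) are all odd. From here the arguments diverge.

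The paper invokes Lemma~\ref{lem_4}: the shadow structure associated to $P$ (respectively to $P'=(P\setminus\{x\})\cup\{y\}$) shows that exactly two arcs of $T$ have shadows meeting $E$, and these shadows can be taken disjoint from $x$ (respectively from $y$). This lets one regard $x$ and $y$ as compressing disks for a $2$-string trivial tangle, and such a tangle has a unique compressing disk, forcing $x=y$. Your argument avoids Lemma~\ref{lem_4} entirely and instead performs an outermost-arc surgery of $D_x$ along $D_y$, producing a compressing disk whose boundary is isotopic to one of the odd boundaries $a$ of the adjacent pants; the parity obstruction (a compressing disk must separate an even number of punctures to each side) then gives the contradiction directly.

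Your approach is more elementary and self-contained: it needs only Lemma~\ref{lem_0} and standard innermost/outermost disk arguments, not the $\Gcal(P)$ machinery. The paper's approach, on the other hand, is more structural and ties the lemma into the shadow framework that is reused repeatedly later (Propositions~\ref{lem_5} and~\ref{lem_3}); it also explains \emph{why} the move is blocked, namely that inside $E$ the tangle looks like a $2$-string tangle with a unique compressing slope. Either proof is fine here; the paper's choice reflects a preference for exercising the tools it has just built.
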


\begin{proof}
%
By Lemma \ref{lem_0}, every pants decomposition has at least $b-1$ even curves. Thus, $y$ cannot be a cut-curve and so $y$ bounds a compressing disk for $T$. 
Let $E$ be the 4-holed sphere corresponding to $e$. The four boundaries of $E$ bound disks disjoint from $E$ so we can use Part 2 of Lemma \ref{lem_0} to estimate the number of even loops in them. Since both $x$ and $y$ are even, 
out of the four boundaries of $E$, either none, two or all, bound an even number of punctures. Since $P$ has exactly $b-2$ even loops besides $x$, we conclude that none of the boundaries of $E$ bound an even number of punctures. Thus, each loop in $\partial E$ bounds a cut-disk for $T$. 
Since $P\in \Pcal_c(T)$, Lemma \ref{lem_4} implies that there exist a complete collection of shadow arcs for $T$ such that (1) all but two arcs are disjoint from $E$, (2) each of the two arcs connects a pair of boundaries of $E$, 
and (3) the shadows are disjoint from $x$. Analogously, since $(P-\{x\})\cup \{y\}\in \Pcal_c(T)$, there is a collection of shadows for $T$ satisfying the properties (1)-(3) with respect to $y$. Hence, we can treat $x$ and $y$ as compressing disks for a 2-string tangle. Since 2-string tangles admit unique compressing disks, $x=y$, contradicting the fact that $e$ is an edge in $C^*(\Sigma)$. 
\end{proof}

\section{Estimates for $\Lcal^*$-invariants}\label{section:estimates_L*}

In this section we give a lower bound for the $\Lcal^*$-invariant of $(b,c)$-bridge trisections. Notice that Lemma \ref{lem:then_nonmin} and Proposition \ref{prop_1} below hold for both $\Lcal^*$- and $\Lcal$-invariants. 

\begin{lemma}\label{lem:then_nonmin}
Let $r$ be a reducing curve for the link $T_i \cup \T_j$ and let $\left(P^i_{ik},P^k_{ik}\right)$ be a efficient defining pair for $T_i\cup \T_k$. Suppose that $|r\cap y|$ is zero for all $y\in P^i_{ik}\cap P^k_{ik}$ and at most two for all $y\in P^i_{ik}-P^k_{ik}$. Then $\mc{T}$ is reducible or stabilized. 
\end{lemma}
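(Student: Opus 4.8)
The plan is to analyze the position of the reducing curve $r$ relative to the efficient defining pair $(P^i_{ik},P^k_{ik})$ and conclude that either $r$ also bounds a compressing disk in the third tangle $T_k$ (forcing reducibility), or some arc of $T_k$ has a shadow meeting $r$ once (forcing a stabilization via Lemma~\ref{lem_2}). Write $P^i_{ik}=\psi\cup g$ and $P^k_{ik}=\psi\cup g'$ as in Lemma~\ref{lem_1}, where $\psi=P^i_{ik}\cap P^k_{ik}$, $|g|=|g'|=b-c$, $|\psi|=b+c-3$. The hypothesis gives $|r\cap y|=0$ for all $y\in\psi$ and $|r\cap y|\le 2$ for all $y\in g$, hence $|r\cap y|\le 2$ for \emph{every} $y\in P^i_{ik}$.

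The first step is to propagate this intersection bound from $P^i_{ik}$ to $P^k_{ik}$. By Lemma~\ref{lem_1} the pair $(P^i_{ik},P^k_{ik})$ is geodesic in $\Pcal(\Sigma)$ as well, so there is a geodesic $\lambda$ of A-moves from $P^i_{ik}$ to $P^k_{ik}$ that moves each curve of $g$ exactly once. Applying Lemma~\ref{lem_6} inductively along $\lambda$ — at each A-move exactly one curve of the current pants decomposition is replaced, and Lemma~\ref{lem_6} guarantees the replacement still meets $r$ at most twice — I conclude $|r\cap x|\le 2$ for every curve $x$ occurring along $\lambda$, in particular for all $x\in P^k_{ik}$. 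Since $P^k_{ik}$ lies in the disk set $\Pcal_c(T_k)$, the curve $r$ together with the pants decomposition $P^k_{ik}$ now satisfies the hypotheses of Propositions~\ref{lem_3} and~\ref{lem_5} for the tangle $T_k$.

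The second step is a case split on the arcs of $T_k$. If every pair of punctures $p\sim_{T_k}q$ lies on the same side of $r$, then Proposition~\ref{lem_3} produces a compressing disk for $T_k$ with boundary $r$; combined with the compressing disks that $r$ bounds in $T_i$ and in $\T_j$ (these exist since $r$ is a reducing curve for $T_i\cup\T_j$), we obtain a simple closed curve bounding a $c$-disk on all three tangles, so $\mc{T}$ is reducible by the characterization of reducibility recalled in Section~\ref{section:Background}. Otherwise there is an arc of $T_k$ whose endpoints $p\sim_{T_k}q$ lie on opposite sides of $r$; Proposition~\ref{lem_5}(i) then yields a shadow $a$ of that arc with $|a\cap r|=1$. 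Since $r$ is a reducing curve for $T_i\cup\T_j$ and $a$ is a shadow of an arc in the remaining tangle $T_k$, Lemma~\ref{lem_2} applies and $\mc{T}$ is stabilized.

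The main obstacle is the first step: without the fine structure of efficient pairs from Lemma~\ref{lem_1} (common curves $\psi$, each $g$-curve moved once along a geodesic) and the A-move stability of the bound $|r\cap\cdot|\le 2$ from Lemma~\ref{lem_6}, one cannot transfer control of $r$ from $P^i_{ik}$ to $P^k_{ik}$; once that transfer is in place, Propositions~\ref{lem_3} and~\ref{lem_5} finish the argument essentially mechanically.
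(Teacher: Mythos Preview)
Your proof is correct and follows essentially the same approach as the paper: decompose the efficient pair via Lemma~\ref{lem_1}, propagate the bound $|r\cap\cdot|\le 2$ from $P^i_{ik}$ to $P^k_{ik}$ along a geodesic of A-moves using Lemma~\ref{lem_6}, and then apply Propositions~\ref{lem_3} and~\ref{lem_5} together with Lemma~\ref{lem_2} in the two cases. If anything, your write-up is slightly more precise in stating that $r$ is disjoint from $\psi$ (the common curves) rather than from $g$.
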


\begin{proof}
By Lemma \ref{lem_1}, the curves in the geodesic pair are of the form $P^i_{ik}=\psi \cup g$ and $P^k_{ik}=\psi \cup g'$. By the same lemma, there exists a geodesic $\lambda'$ of length $|g|$, from $P^i_{ik}$ to $P^k_{ik}$, that moves each curve in $g$ exactly once. By hypothesis, $\gamma$ is disjoint from all curves in $g$. 
Thus, via several applications of Lemma \ref{lem_6}, we conclude that $|r \cap y|\leq 2$ for all $y\in g'$. Hence, the hypothesis of Propositions \ref{lem_5} and \ref{lem_3} with $r$ and $P^k_{ik}$ are satisfied. 
If $r$ does not separate any pair of punctures $p\sim_k q$, then by the latter proposition, $r$ bounds a compressing disk for $T_k$. Thus, $r$ bounds a disk in all tangles and $\mc{T}$ is reducible. If $r$ separates a pair of punctures $p\sim_k q$, then by Proposition \ref{lem_5}, there is an shadow $a$ for $T_k$ with $|a\cap r|=1$. Hence, by Lemma \ref{lem_2}, $\mc{T}$ is a stabilization. 
\end{proof}

\begin{proposition}\label{prop_1}
Let $\mc{T}$ be an irreducible and unstabilized bridge trisection. Let $\gamma$ be a c-reducing curve for $T_i\cup \T_j$ and let $(P^i_{i,k},P^k_{ik})$ be an efficient defining pair for $T_i\cup \T_k$. Then $\gamma \not \in P^i_{ik}$. 
\end{proposition}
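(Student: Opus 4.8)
\textbf{Proof proposal for Proposition \ref{prop_1}.}

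The plan is to argue by contradiction: assume $\gamma \in P^i_{ik}$ and derive that $\mc{T}$ is reducible or stabilized, contradicting the hypothesis. The first step is to invoke Lemma \ref{lem_1} applied to the efficient defining pair $(P^i_{ik}, P^k_{ik})$ for the unlink $T_i \cup \T_k$, which gives the decomposition $P^i_{ik} = \psi \cup g$ and $P^k_{ik} = \psi \cup g'$ with $|g| = |g'| = b - c_k$, together with a geodesic $\lambda'$ in $C^*(\Sigma)$ from $P^i_{ik}$ to $P^k_{ik}$ that moves each curve in $g$ exactly once. The natural dichotomy is on whether $\gamma$ lies in the common part $\psi$ or in the moving part $g$.

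If $\gamma \in \psi = P^i_{ik}\cap P^k_{ik}$, then $\gamma$ is a curve bounding a c-disk in both $T_i$ and $T_k$; combined with the hypothesis that $\gamma$ is a c-reducing curve for $T_i\cup\T_j$ (so it bounds c-disks in $T_i$ and $T_j$), we would have $\gamma$ bounding c-disks in all three tangles. I need to be slightly careful here, since a c-disk may be a compressing disk or a cut-disk, so ``reducible'' in the sense of the paper (a curve bounding \emph{compressing} disks of the right multiplicity on all sides) does not follow immediately; the intended route is presumably to feed $\gamma$ and $P^k_{ik}$ into Propositions \ref{lem_5} and \ref{lem_3} exactly as in the proof of Lemma \ref{lem:then_nonmin}. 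Since $\gamma \in P^k_{ik}$, certainly $|\gamma \cap y| \le 2$ for all $y \in P^k_{ik}$ (in fact $\gamma$ is disjoint from them). If $\gamma$ separates no pair $p \sim_{T_k} q$, then $\gamma$ bounds a compressing disk for $T_k$ by Proposition \ref{lem_3}, and a symmetric argument using that $\gamma \in P^i_{ik}$ shows it bounds a compressing disk for $T_i$; since $\gamma$ also bounds a c-disk for $T_j$, one checks it must bound a compressing disk there too, so $\mc T$ is reducible. If $\gamma$ does separate a pair $p \sim_{T_k} q$, then Proposition \ref{lem_5} produces a shadow $a$ for $T_k$ with $|a \cap \gamma| = 1$, and since $\gamma$ is a reducing curve for $T_i\cup\T_j$, Lemma \ref{lem_2} gives that $\mc T$ is stabilized — either way a contradiction.

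If instead $\gamma \in g$, then $\gamma$ is moved exactly once along the geodesic $\lambda'$, so consider the final pants decomposition $P^k_{ik} = \psi \cup g'$: the partner curve $\gamma' \in g'$ with $|\gamma \cap \gamma'| = 2$ (Lemma \ref{lem_1}(iv)) is a curve bounding a c-disk for $T_k$, and $\gamma$ bounds a c-disk for $T_j$ (it is c-reducing for $T_i \cup \T_j$). The idea is that a geodesic that moves only $\gamma$ realizing the substitution $\gamma \mapsto \gamma'$, together with the hypotheses on intersection numbers, puts us in position to apply Lemma \ref{lem:then_nonmin} with $r = \gamma$ playing the role of the reducing curve and $(P^i_{ik}, P^k_{ik})$ the efficient pair — noting $\gamma$ is disjoint from $\psi = P^i_{ik}\cap P^k_{ik}$ (being another element of the pants decomposition $P^i_{ik}$) and from the other curves of $g$, so it meets each $y \in P^i_{ik} - P^k_{ik} = g$ in $0 \le 2$ points. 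That lemma then directly yields that $\mc T$ is reducible or stabilized. The main obstacle I anticipate is the bookkeeping around compressing disks versus cut-disks — ensuring that ``$\gamma$ bounds a c-disk on all three sides'' can always be upgraded to the genuine reducibility/stabilization alternative — and confirming that $\gamma$ being a single curve of $P^i_{ik}$ (rather than a curve with $|\gamma \cap P^i_{ik}| \le 2$) is exactly what is needed to invoke Lemma \ref{lem:then_nonmin}; handling the case $\gamma \in g$ so that Lemma \ref{lem:then_nonmin}'s hypotheses on $P^i_{ik}\cap P^k_{ik}$ versus $P^i_{ik} - P^k_{ik}$ are literally met is where I would spend the most care.
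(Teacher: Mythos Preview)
Your approach is correct and matches the paper's: both split into the cases $\gamma \in \psi$ and $\gamma \in g$, and for $\gamma \in g$ both invoke Lemma~\ref{lem:then_nonmin} directly (your verification that $\gamma$, being a curve of the pants decomposition $P^i_{ik}$, is disjoint from every curve of $\psi$ and of $g$ is exactly the hypothesis check needed). For $\gamma \in \psi$ the paper simply cites Lemma~2.10 of \cite{aranda2021bounding}; your first sentence in that case---that $\gamma$ then bounds c-disks in all three tangles---is already the full argument, since the paper's definition of reducible requires only a curve bounding \emph{c-disks} (not compressing disks) in each tangle, so your subsequent detour through Propositions~\ref{lem_5} and~\ref{lem_3} and the stabilization alternative, while not wrong, is unnecessary.
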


\begin{proof}
Let $\psi$, $g$ and $g'$ be sets of curves so that $P^i_{ik}=\psi \cup g$ and $P^k_{ik}=\psi\cup g'$. Lemma 2.10 of \cite{aranda2021bounding} states that $\gamma \not \in \psi$. 
Suppose 
that $\gamma\in g$. 
In particular, $r=\gamma$ satisfies the hypothesis of \ref{lem:then_nonmin}. Thus, $\mc{T}$ is reducible or stabilized.
\end{proof}

\begin{theorem}\label{thm_lower_L*}
Let $\mc{T}$ be a $(b;c)$-bridge trisection for an embedded surface $F\subset S^4$. If $\mc{T}$ is irreducible and unstabilized, then $\Lcal^*(\mc{T})\geq 3(b+c-3)$. 
\end{theorem}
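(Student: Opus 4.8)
\textbf{Proof plan for Theorem \ref{thm_lower_L*}.}

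The plan is to lower-bound each of the three geodesic-length terms appearing in the definition of $\Lcal^*(\mc T)$ separately, showing that each is at least $b+c-3$, so that their sum is at least $3(b+c-3)$. Fix an $i\in\Z/3\Z$; I will estimate the term $d(P^i_{ij},P^i_{ik})$ coming from the disk set $\Pcal_c(T_i)$. By Lemma \ref{lem_1}, write the efficient pair for $T_i\cup\T_j$ as $P^i_{ij}=\psi\cup g$, $P^j_{ij}=\psi\cup g'$ with $|g|=|g'|=b-c_{ij}$ and $|\psi|=b+c_{ij}-3$; here $c_{ij}=c$ since $\mc T$ is a $(b,c)$-trisection, so $|\psi|=b+c-3$. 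Similarly write the pair for $T_i\cup\T_k$ as $P^i_{ik}=\phi\cup h$, $P^k_{ik}=\phi\cup h'$ with $|\phi|=b+c-3$. The two decompositions $P^i_{ij}$ and $P^i_{ik}$ both lie in $\Pcal_c(T_i)$, and I want to show one cannot get from one to the other in fewer than $b+c-3$ dual-curve moves.

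The key point is that the $b+c-3$ curves in $\psi$ are exactly the curves of $P^i_{ij}$ that must be \emph{moved} to reach $P^i_{ik}$ — more precisely, I claim that no curve of $\psi$ can already belong to $P^i_{ik}$. Suppose $\gamma\in\psi\cap P^i_{ik}$. Then $\gamma$ bounds a $c$-disk in $T_i$ (since $P^i_{ik}\in\Pcal_c(T_i)$) and $\gamma\in\psi$ means $\gamma$ bounds a $c$-disk in both $T_i$ and $T_j$ (it is a common curve of the efficient pair for $T_i\cup\T_j$, hence bounds $c$-disks on both sides by definition of $\Pcal_c$). So $\gamma$ is a $c$-reducing curve for $T_i\cup\T_j$ that lies in $P^i_{ik}$; but $\gamma\in\psi$ is a \emph{common} curve of the efficient pair, so a priori this is the case ruled out by ``Lemma 2.10 of \cite{aranda2021bounding}'' invoked inside the proof of Proposition \ref{prop_1}, while the case $\gamma\in g$-type position is ruled out by Proposition \ref{prop_1} itself together with Lemma \ref{lem:then_nonmin}. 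Combining: since $\mc T$ is irreducible and unstabilized, no $c$-reducing curve for $T_i\cup\T_j$ can lie in $P^i_{ik}$ at all. Hence $\psi\cap P^i_{ik}=\emptyset$, so every one of the $b+c-3$ curves of $\psi$ is destroyed along any path in $C^*(\Sigma)$ from $P^i_{ij}$ to $P^i_{ik}$; since a single dual-curve move (A-move or otherwise) changes only one curve, such a path has length at least $|\psi|=b+c-3$, giving $d(P^i_{ij},P^i_{ik})\geq b+c-3$.

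Summing the three inequalities $d(P^i_{ij},P^i_{ik})\geq b+c-3$ for $i=1,2,3$ over any choice of efficient defining pairs yields $\Lcal^*(\mc T)\geq 3(b+c-3)$, as claimed. The main obstacle is the curve-counting/bookkeeping step: one must be careful that the ``common curves'' $\psi$ of the efficient pair $(P^i_{ij},P^j_{ij})$ really are forced to be $c$-reducing curves for $T_i\cup\T_j$ (this uses that both endpoints lie in $\Pcal_c$, so each curve of $\psi$ bounds a $c$-disk on \emph{each} side), and that a dual-curve move genuinely changes only a single isotopy class of curve so the distance bound is exactly the count $|\psi|$. A secondary subtlety is checking that the hypotheses of Lemma \ref{lem:then_nonmin} and Proposition \ref{prop_1} apply with the roles of $j$ and $k$ as set up here — in particular that a curve of $\psi$ lying in $P^i_{ik}$ would, via Proposition \ref{lem_5}/\ref{lem_3}, force either a reduction (contradicting irreducibility) or a shadow meeting the curve once (contradicting, via Lemma \ref{lem_2}, unstabilizedness).
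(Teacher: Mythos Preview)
Your proposal is correct and matches the paper's approach exactly: use Lemma~\ref{lem_1} to identify the $b+c-3$ common curves $\psi$ of the efficient pair for $T_i\cup\T_j$ as c-reducing curves, then invoke Proposition~\ref{prop_1} to conclude none of them lie in $P^i_{ik}$, forcing $d(P^i_{ij},P^i_{ik})\geq b+c-3$. One small notational slip: in your parenthetical discussion of Proposition~\ref{prop_1}'s internal case split, the sets called $\psi$ and $g$ there refer to the decomposition of $P^i_{ik}$ (the $(i,k)$ pair), not of $P^i_{ij}$, so your curve $\gamma$ could land in either piece---but since you only need the \emph{conclusion} of Proposition~\ref{prop_1}, this does not affect the argument.
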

\begin{proof} 
Lemma \ref{lem_1} and Proposition \ref{prop_1} imply that the distance in $\mc{C}^*(\Sigma)$ between any efficient defining pair $(P^i_{ik}, P^k_{ik})$ is at least $b-3+c$. 
\end{proof}

\begin{example}[Unknotted torus]\label{rmrk:unknotted_torus}
From \cite{meier2017bridge}, we know that the unknotted torus, denoted by $F$, admits a minimal $(3,1)$-bridge trisection. One can see that such splitting has $\Lcal$-invariant at most $3$. So, by Theorem \ref{thm_lower_L*}, we get that $3\cdot 1 \leq \Lcal^*(F)\leq \Lcal(F)\leq 3$. Thus, the $\Lcal^*$- and $\Lcal$-invariants of $F$ are equal to $3$. 
\end{example}
This example shows, in particular, that the lower bound given in Theorem \ref{thm_lower_L*} is sharp. 
We conjecture that this bound can be improved if $F$ is knotted. 
In other words, $\Lcal^*(F)\leq 3(b+c-3)$ if and only if $F$ is unknotted. In the rest of this Section we show evidence for this conjecture by computing the $\Lcal^*$-invariants for infinite families of knotted surfaces. 


\subsection*{$\Lcal^*$-invariant of $(b,2)$-trisections}

\textbf{Notation.} Let $\mc{T}$ be a $(b,2)$-bridge trisection. Using the notation of Lemma \ref{lem_1}, let $\gamma$, $\psi$, $f$, $g$ be sets of curves such that $P^i_{ij}=\gamma \cup f$, $P^i_{ik}=\psi\cup g$, and $\gamma$ (resp. $\psi$) are c-reducing curves for $T_i\cup \T_j$ (resp. $T_i\cup T_k$). The condition $c=2$ implies that $\gamma$ and $\psi$ have exactly one reducing curve each. We denote them by $\gamma_1\in \gamma$ and $\psi_1\in \psi$, respectively. A key observation is that each pants decomposition $P^i_{ij}$ and $P^i_{ik}$ has exactly $b-1$ curves bounding even punctures. 

\begin{theorem}\label{thm:L*_disco}
Let $\mc{T}$ be an irreducible and unstabilized $(b;2)$-bridge trisection. If the underlying surface is \textbf{disconnected}, then $\Lcal^*(\mc{T})\geq 3b$. 
\end{theorem}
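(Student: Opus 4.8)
The plan is to improve the generic bound $\Lcal^*(\mc T)\ge 3(b+c-3)=3(b-1)$ by exactly $3$, using the fact that $F$ is disconnected. Since $\mc T$ is a $(b;2)$-bridge trisection, each of the three unlinks $T_i\cup\T_j$ is a $2$-component unlink in $b$-bridge position, so by Lemma \ref{lem_1} each efficient defining pair $(P^i_{ij},P^j_{ij})$ has $|g|=|g'|=b-c=b-2$ curves that move and $|\psi|=b+c-3=b-1$ common curves, with a unique reducing curve $\gamma_1$ (resp. $\psi_1$) among them. The generic lower bound comes from adding the three geodesic lengths $d(P^i_{ij},P^i_{ik})\ge b-2$ around the loop; to gain an extra unit in each of the three distances I want to show that no efficient defining pair can be joined to the next by a geodesic of length exactly $b-2$, i.e. each of the three legs has length at least $b-1$, summing to $3(b-1)=3b-3$ — wait, that only gives $3b-3$. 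So the gain must be $3$ total distributed as $+1$ on each leg, giving $3(b-2)+3 = 3b-3$; this is still short of $3b$. Hence the correct accounting must be: each leg has length $\ge b-2$, and at least one unit is lost somewhere, but the disconnectedness actually forces a \emph{global} gain of $3$ over the loop in a way that the three individual geodesics cannot all be realized simultaneously. I would therefore set this up as a loop-length argument rather than a leg-by-leg argument.

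Concretely, here is the approach I would carry out. Fix efficient defining pairs and let $\lambda$ be a shortest loop in $C^*(\Sigma)$ through $P^1_{12},P^2_{12},P^2_{23},P^3_{23},P^3_{13},P^1_{13}$ realizing $\Lcal^*(\mc T)$. Write $P^i_{ij}=\gamma\cup f$ and $P^i_{ik}=\psi\cup g$ as in the Notation paragraph. First I would argue, using Proposition \ref{prop_1}, that the reducing curve $\gamma_1$ of $T_i\cup\T_j$ cannot lie in $P^i_{ik}$ for the \emph{other} index $k$; combined with Lemma \ref{lem_1}(ii)–(iv) this controls how $\gamma_1$ must be moved along the portion of $\lambda$ from $P^i_{ik}$ to $P^i_{ij}$. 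Since $F$ is disconnected, each unlink $L_i=T_i\cup\T_{i+1}$ has $c_i=2$ components, so \emph{every} one of the three bridge surfaces genuinely carries a reducing (not merely cut-reducing) curve; this is the feature absent when $F$ is connected. The key quantitative input is that each $P^i_{ij}$ has exactly $b-1$ even curves (stated in the Notation), and that at the vertex $P^i_{ij}$ the two legs of $\lambda$ arriving (from $P^i_{ik}$ side) and leaving (to $P^j_{ji}$ side... more precisely the legs meeting at $P^i_{ij}$ and $P^j_{ij}$) must both move the reducing curves; so the reducing curve $\gamma_1\in P^i_{ij}$ is not fixed by the whole loop $\lambda$, hence must be moved, contributing at least one extra edge to $\lambda$ beyond the $3(b-2)$ coming from the generic count of non-common curves.

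The main obstacle is the bookkeeping showing the extra $3$ (rather than just $1$ or $2$) and making it rigorous that the three reducing curves $\gamma_1^{(12)},\gamma_1^{(23)},\gamma_1^{(13)}$ are moved in three ``disjoint'' stretches of $\lambda$, so their moves are not double-counted against the $b-2$ generic moves on each leg. For this I would: (1) decompose $\lambda$ into its three arcs $\lambda_{12},\lambda_{23},\lambda_{31}$ between consecutive tangle-pair vertices of the same color, noting $|\lambda|=|\lambda_{12}|+|\lambda_{23}|+|\lambda_{31}|$; (2) on each arc, say $\lambda_{12}$ from $P^1_{12}$ to $P^1_{13}$ (a geodesic-pair segment for $T_1\cup\T_2$... actually I must be careful: the arc between $P^1_{12}$ and $P^1_{13}$ is free, while the segments $P^i_{ij}$–$P^j_{ij}$ are efficient), use Lemma \ref{lem_1} applied to $T_i\cup\T_j$ together with Proposition \ref{prop_1} to show the reducing curve of $T_i\cup\T_j$ moves at least once within that efficient segment while all $b-2$ of the $g$-curves also move, forcing length $\ge b-1$ there; but since we also need the free arcs to be nontrivial, the total around the loop exceeds $3(b-2)$ by $3$. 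If the leg-by-leg approach only yields $+1$ each on the three efficient segments (total $3b-3$), the remaining $+3$... no. Let me reconsider: $3(b-1) = 3b-3 \ne 3b$. So I must instead show each of the three efficient segments has length $\ge b$, i.e. $\ge (b-2)+2$: the $b-2$ moving curves of $g$, plus the reducing curve $\gamma_1$, plus one more. The ``one more'' should come from an even/odd parity obstruction à la Lemma \ref{lem:gamma1_moves_first}: moving $\gamma_1$ (an odd curve, since it bounds a single puncture on each side when $c=2$... actually a reducing curve bounds a compressing disk, so it separates the punctures into two \emph{even} groups — it is an even curve) forces an intermediate pants decomposition violating the ``$b-1$ even curves'' count unless an auxiliary move is made. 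I would use Lemma \ref{lem:gamma1_moves_first} and the parity lemmas (Lemma \ref{lem_0}) to extract this extra move, concluding $d(P^i_{ij},P^i_{ik})\ge b$ for each $i$, hence $\Lcal^*(\mc T)\ge 3b$. The delicate point, and the one I expect to be hardest, is ensuring the ``reducing curve $\gamma_1$ moves'' and the ``parity-forced extra move'' are genuinely distinct edges and not the same edge, which is exactly where Lemma \ref{lem:gamma1_moves_first} (no edge moves an even curve to a c-disk curve when there are $b-1$ even curves) does the work of separating them.
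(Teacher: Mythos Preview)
Your proposal has a genuine gap: you have misidentified where the hypothesis ``$F$ disconnected'' enters. You write that disconnectedness ensures ``\emph{every} one of the three bridge surfaces genuinely carries a reducing (not merely cut-reducing) curve; this is the feature absent when $F$ is connected.'' This is false: in a $(b;2)$-bridge trisection, by definition $c_1=c_2=c_3=2$, so each $T_i\cup\T_j$ is a $2$-component unlink and carries a genuine reducing curve regardless of whether $F$ is connected. Your parity argument via Lemma \ref{lem:gamma1_moves_first} therefore cannot by itself distinguish the disconnected case from the connected one, and hence cannot produce the extra edge you need.

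You also have the bookkeeping backwards on the free legs. In the notation $P^i_{ij}=\gamma\cup f$, $P^i_{ik}=\psi\cup g$, it is the $b-1$ curves of $\gamma$ (the c-reducing curves for $T_i\cup\T_j$, which include the reducing curve $\gamma_1$) that must all leave $P^i_{ij}$ along the leg to $P^i_{ik}$, by Proposition \ref{prop_1}. So the generic bound on each free leg is already $d(P^i_{ij},P^i_{ik})\ge b-1$, not $b-2$; there is no separate ``$\gamma_1$ moves'' contribution to add on top. You only need one extra edge per leg.

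The paper's actual argument supplies that extra edge as follows. Assume a free leg has length exactly $b-1$; then each curve of $\gamma$ moves once to a curve of $\psi$ and $f=g$ is fixed. Lemma \ref{lem:gamma1_moves_first} forces the move of $\gamma_1$ to occur after $\psi_1$ has already appeared, so at some vertex both $\gamma_1$ and $\psi_1$ lie in the same pants decomposition and are therefore \emph{disjoint}. Now disconnectedness is used: the $2b$ punctures of $\Sigma$ split into two sets according to which component of $F$ they lie on, and any reducing curve for any of the $2$-component unlinks $T_i\cup\T_j$ must separate exactly these two sets. Hence $\gamma_1$ and $\psi_1$ separate the punctures identically, so on the sphere $\Sigma$ they are either isotopic or intersect --- contradicting disjointness (and Proposition \ref{prop_1} rules out $\gamma_1=\psi_1$). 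This is the step your sketch is missing.
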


\begin{proof}
We will show that the distance $d(P^i_{ij}, P^i_{ik})$ in $C^*(\Sigma)$ is at most $3b$. Let $\lambda$ be a path connecting $P^i_{ij}$ and $P^i_{ik}$. By Proposition \ref{prop_1}, the length of $\lambda$ is at least $|\gamma|=b-1$. Suppose, by way of contradiction, that every curve in $\gamma$ moves exactly once. In particular, $f=g$ is fixed and every curve in $\psi$ moves once. 
We now investigate when the reducing curves $\gamma_1$ and $\psi_1$ moves. By Lemma \ref{lem:gamma1_moves_first}, $\gamma_1$ cannot move to $\psi_1$. Without loss of generality, assume that $\gamma_1 \mapsto \psi_2$ and $\gamma_2\mapsto \psi_1$. 
By Lemma \ref{lem:gamma1_moves_first}, the edge $\gamma_1\mapsto \psi_2$ must occur after $\gamma_2$ moves. In particular, $\gamma_1$ and $\psi_1$ will be disjoint reducing curves. 
However, since the surface is disconnected, the curves $\gamma_1$ and $\psi_1$ bound the same punctures in $\Sigma$. This forces them to be the same curve or to intersect, contradicting the previous statement. Hence, such $\lambda$ cannot exist and $d(P^i_{ij},P^i_{ik})\geq b$. 
\end{proof}

\begin{example}[$T^2$-spin of a 2-bridge link]\label{example:T2spin_L*}
Let $L$ be a 2-bridge 2-component unlink and let $F=T(L)$. By lemmas \ref{lem:6bridge} and \ref{lem:bridget2}, every minimal bridge $(b;c_1,c_2,c_3)$-bridge trisection of $F$ is irreducible and satisfies $b=6$ and $c_i=2$. Thus, by Theorem \ref{thm:L*_disco}, $\Lcal(F)\geq 3\cdot 6=18$. We now discuss the opposite inequality.  
We use the $(6;2)$-bridge trisection for $F$ described in Lemma \ref{lem:T2_triplane}. Figure \ref{fig:t2fig1} shows a picture of the three links $L_i=T_i \cup \T_{i+1}$ when $L$ a Hopf link. 
For an arbitrary 2-bridge link, figures \ref{fig:t2fig1} and \ref{fig:t2fig2} will almost be the same except for the long loops on each pants that will then resemble the Conway number of $L$. The sequence of paths in $C^*(\Sigma)$ described in Figure \ref{fig:t2fig2} shows that $\Lcal^*(F)\leq 6+6+6=18$. Hence, $\Lcal^*(F)=18$.
\end{example}

\begin{figure}[ht!]
\includegraphics[width=17cm]{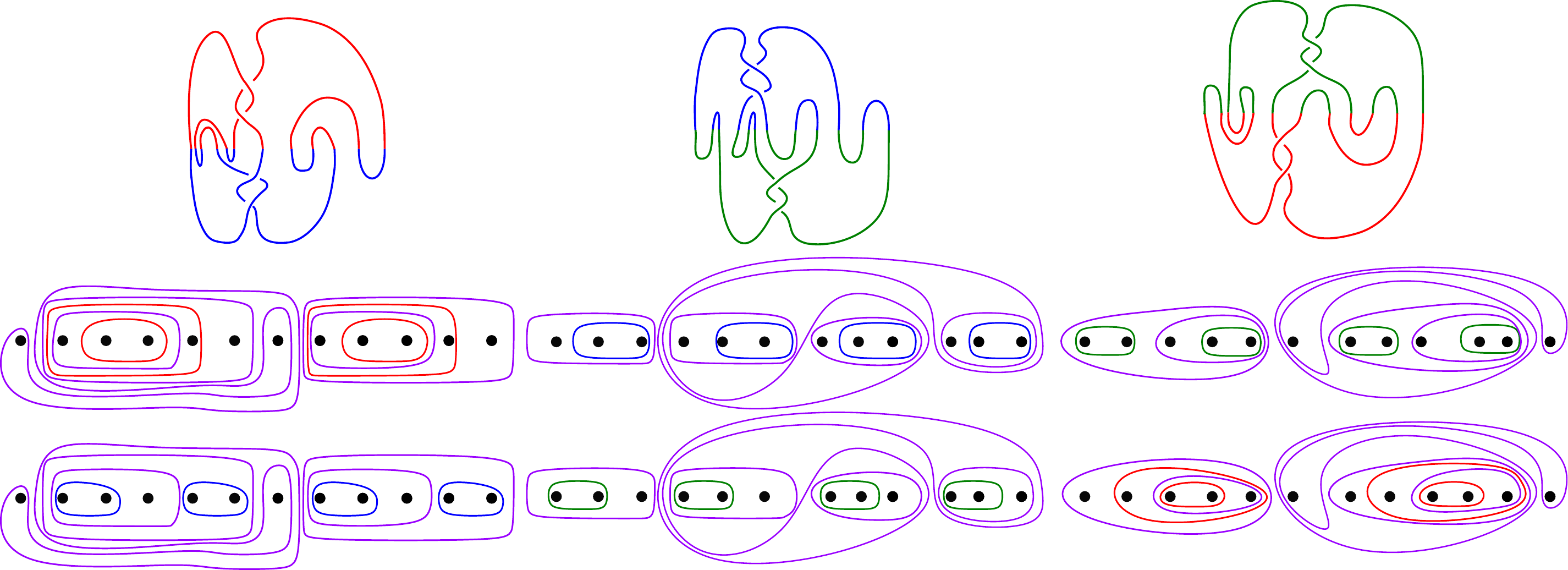}
\centering
\caption{$\Lcal^*$-invariant of the $T^2$-spin of a 2-bridge link. Each column depicts bridge positions and efficient defining pairs for the unlink $L_i = T_{i}\cup \T_{i+1}$.}
\label{fig:t2fig1}
\end{figure}
\begin{figure}[ht!]
\includegraphics[width=15cm]{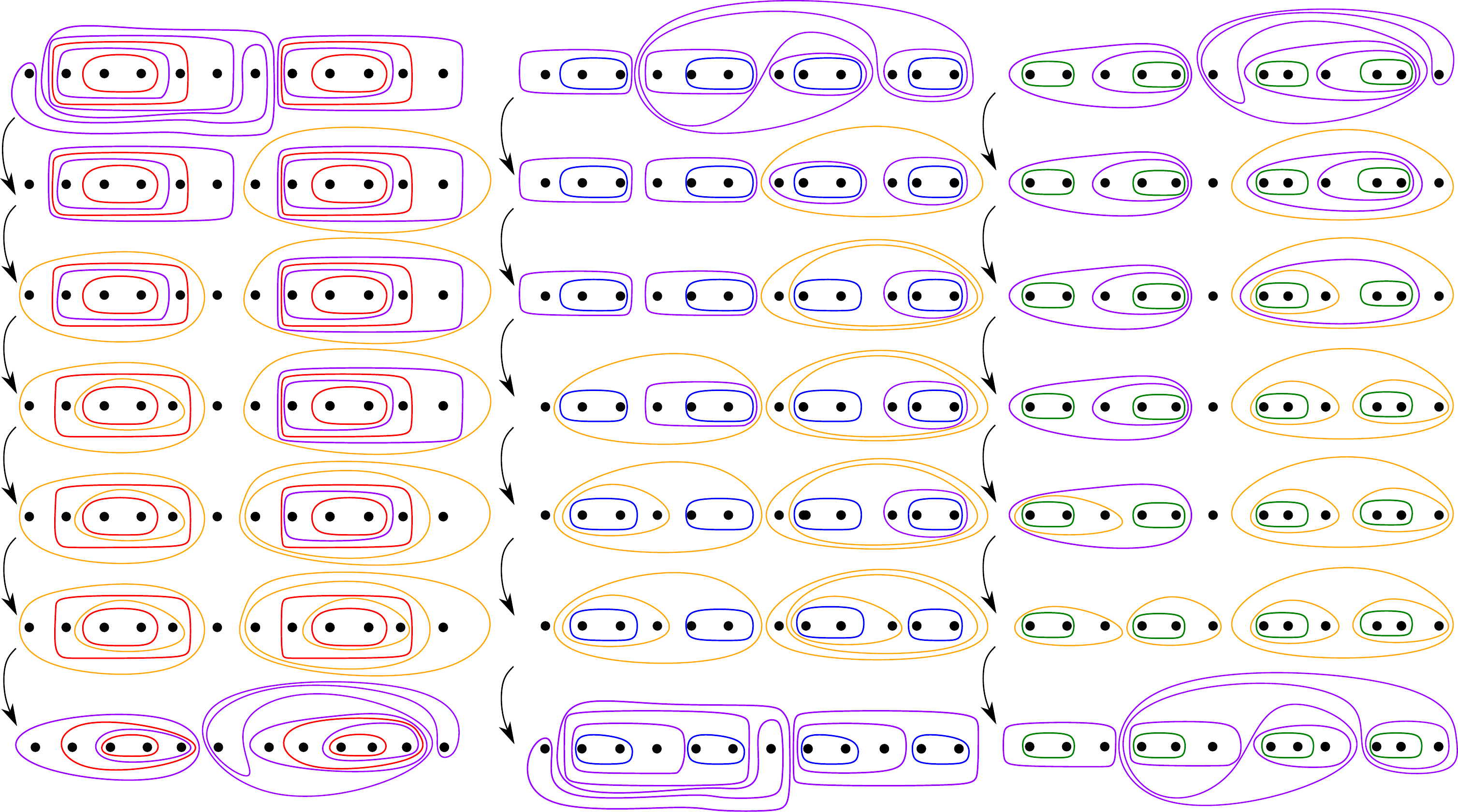}
\centering
\caption{$\Lcal^*$-invariant of the $T^2$-spin of a 2-bridge link. A path 
connecting $P^i_{ij}$ and $P^i_{ik}$. }
\label{fig:t2fig2}
\end{figure}

\begin{lemma}\label{lem:wheregamma2goes}
Let $F$ be a non-split surface with minimal $(4,2)$-bridge trisection. Suppose $e$ is an edge with initial endpoint in $P_{ij}^i$. 
Then, $e$ does not move loops in $\gamma$ to any reducing curves in $\psi$. 
\end{lemma}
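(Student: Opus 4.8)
\textbf{Proof proposal for Lemma \ref{lem:wheregamma2goes}.}

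The plan is to argue by contradiction, assuming some edge $e$ starting at $P^i_{ij}$ moves a curve of $\gamma$ directly onto a reducing curve of $\psi$. Recall from the notation set up before Theorem \ref{thm:L*_disco} that in a $(4,2)$-bridge trisection each of $P^i_{ij}=\gamma\cup f$ and $P^i_{ik}=\psi\cup g$ has exactly $b-1=3$ curves bounding an even number of punctures, the sets $\gamma$ and $\psi$ each contain a single reducing curve $\gamma_1$, $\psi_1$, and $\chi(\Sigma_{0,8})$ gives $2b-3=5$ curves in each pants decomposition. So $|\gamma|=|\psi|=b-c=2$: write $\gamma=\{\gamma_1,\gamma_2\}$ and $\psi=\{\psi_1,\psi_2\}$, with $\gamma_1$ the reducing curve of $T_i\cup\T_j$ and $\psi_1$ the reducing curve of $T_i\cup\T_k$. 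The only way the edge $e$ could be of the forbidden type is $\gamma_1\mapsto\psi_1$, $\gamma_1\mapsto\psi_2$, or $\gamma_2\mapsto\psi_1$; in each case the target is a reducing curve for $T_i\cup\T_k$, hence in particular a compressing curve for $T_i$.

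First I would dispose of the cases where the curve being moved is the \emph{odd} (non-even) curve. By Lemma \ref{lem_0}(1), since $P^i_{ij}$ has $b-1=3$ even curves among its $2b-3=5$ curves, exactly two curves of $P^i_{ij}$ are odd; at most one such curve lies in $\gamma$ — in fact $\gamma_1$, the reducing curve, bounds a compressing disk in $T_i$, so it bounds an even (indeed zero) number of punctures, and one checks that it is $\gamma_2$ that can be odd. So if the moved curve is $\gamma_2$ and $\gamma_2$ is odd, I would apply Lemma \ref{lem:gamma1_moves_first} directly: a $(4,2)$-trisection has $b-1$ even curves, and that lemma forbids an edge in $C^*(\Sigma)$ that moves an even curve to a curve bounding a c-disk. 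Wait — Lemma \ref{lem:gamma1_moves_first} is about moving an \emph{even} curve, so it handles $\gamma_1\mapsto(\text{anything bounding a c-disk})$ immediately, ruling out $\gamma_1\mapsto\psi_1$ and $\gamma_1\mapsto\psi_2$ in one stroke. That leaves the single genuinely new case $\gamma_2\mapsto\psi_1$.

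For the remaining case $\gamma_2\mapsto\psi_1$: here $\gamma_2$ is the odd curve of $\gamma$ (if $\gamma_2$ were even, Lemma \ref{lem:gamma1_moves_first} again applies, since then $P^i_{ij}$ would still have $b-1$ even curves and $\psi_1$ bounds a c-disk — so WLOG $\gamma_2$ is odd). Let $E$ be the $4$-holed sphere of the edge $e$, with $\partial E\subset P^i_{ij}-\{\gamma_2\}=\{\gamma_1\}\cup f$. As in the proof of Lemma \ref{lem:gamma1_moves_first}, I would run the parity count of Lemma \ref{lem_0}(2) on the four boundary curves of $E$: since $\gamma_2$ is odd and $\psi_1$ is even, out of the four boundaries of $E$ an odd number — hence one or three — bound an odd number of punctures. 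Because $E$ is disjoint from $\{\gamma_1\}\cup f$ and we know which of those are even (all of $f$ contributes to the $b-1$ even curves together with $\gamma_1$), a careful bookkeeping of how many even curves of $P^i_{ij}$ survive in $\partial E$ versus $P^i_{ik}$ should force a contradiction with the fact that $P^i_{ik}=\psi\cup g$ also has exactly $b-1$ even curves: replacing the odd $\gamma_2$ by the even $\psi_1$ changes the even-curve count, but it cannot, because both pants decompositions have $b-1$ even curves and the other curves ($f=g$ here, since only $\gamma$ moves is \emph{not} assumed — so I must instead argue locally in $E$). Concretely, inside $E$ the loop $\gamma_2$ separates two of the four holes from the other two; compressing $T_i$ along $\psi_1$ and using Lemma \ref{lem_4} to produce shadows disjoint from $\psi_1$, while $\gamma_2$ was odd and hence met by a shadow, yields incompatible shadow systems — the same $x=y$ style contradiction as in Lemma \ref{lem:gamma1_moves_first}, now with parity as the obstruction to $\gamma_2$ and $\psi_1$ being interchangeable compressing data.

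\textbf{Main obstacle.} The delicate point is the last case: Lemma \ref{lem:gamma1_moves_first} is stated only for moving an \emph{even} curve, so it does not literally apply to $\gamma_2\mapsto\psi_1$ when $\gamma_2$ is odd. The crux is to show this move is still impossible by tracking the parity of the four boundary curves of the $4$-holed sphere $E$ via Lemma \ref{lem_0}(2) and deriving a contradiction with the global count "exactly $b-1$ even curves" in $P^i_{ik}$, or equivalently by showing the odd curve $\gamma_2$ and the even curve $\psi_1$ cannot be linked by a single c-disk-preserving move. I expect this parity bookkeeping inside $E$, combined with a shadow-system argument à la Lemma \ref{lem:gamma1_moves_first}, to be the technical heart of the proof.
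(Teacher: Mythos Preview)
Your handling of the ``even curve moves'' case via Lemma~\ref{lem:gamma1_moves_first} matches the paper's first sentence exactly. The problems are all in the remaining case, and they begin with bookkeeping errors that cascade.

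\textbf{Sizes and cases.} In the notation fixed before Theorem~\ref{thm:L*_disco}, $\gamma$ is the set of \emph{common} (c-reducing) curves of the pair $(P^i_{ij},P^j_{ij})$, so by Lemma~\ref{lem_1} we have $|\gamma|=b+c-3=3$ and $|f|=b-c=2$, not the other way around. Thus $\gamma=\{\gamma_1,\gamma_2,\gamma_3\}$ with $\gamma_2,\gamma_3$ cut-reducing. Also, ``reducing curves in $\psi$'' means only $\psi_1$; the curve $\psi_2$ is cut-reducing, so the case $\gamma_1\mapsto\psi_2$ is not part of the statement. The real content of the lemma is the single case $\gamma_2\mapsto\psi_1$ (and symmetrically $\gamma_3\mapsto\psi_1$): an odd curve moving to an even compressing curve.

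\textbf{The parity argument fails.} Your count inside the $4$-holed sphere $E$ is off: if $\gamma_2$ is odd then exactly one boundary on each side of $\gamma_2$ is odd, so \emph{exactly two} of the four boundaries of $E$ are odd, not one or three. With two odd and two even boundaries, nothing in Lemma~\ref{lem_0} or the proof of Lemma~\ref{lem:gamma1_moves_first} produces a contradiction --- the situation is perfectly consistent with an odd curve and an even curve both living in $E$. More tellingly, your sketch never invokes the hypotheses ``non-split'' or ``minimal'', and no purely local parity/shadow argument will succeed without them, because a stabilized or split trisection can certainly exhibit such an edge.

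\textbf{What the paper actually does.} The paper's argument for $\gamma_2\mapsto\psi_1$ is geometric, not combinatorial. After the move, $\gamma_1$ and $\psi_1$ sit together in a pants decomposition, hence are disjoint. Lemma~3.6 of \cite{aranda2021bounding} (which uses that $F$ is connected) then forces $\gamma_1$ and $\psi_1$ to bound the same number of punctures, so each cuts off a twice-punctured disk; the one for $\gamma_1$ is a neighborhood of a shadow $s$ common to $T_i$ and $T_j$. Since $s$ lies on the $3$-bridge unknot component of $T_i\cup\T_k$, Theorem~1.1(2b) of Hayashi--Shimokawa \cite{hayashi1998heegaard} provides a compatible shadow system containing $s$, yielding a shadow $b$ for $T_k$ with $|b\cap\gamma_1|=1$. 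The Stabilization Criterion (Lemma~\ref{lem_2}) then says $\mc{T}$ is stabilized, contradicting minimality. This is the missing idea: you need to produce a destabilizing shadow for the \emph{third} tangle $T_k$, and the tool for that is Hayashi--Shimokawa, not a parity count.
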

\begin{proof}
By Lemma \ref{lem:gamma1_moves_first}, the case when a reducing curve in $\gamma$ moves cannot occur. 
Suppose then that $\gamma_2$ moves to $\psi_1$. In particular, $\gamma_1$ and $\psi_1$ are disjoint. 
Since $F$ is connected, Lemma 3.6 of \cite{aranda2021bounding} states that if the reducing curves $\gamma_1$ and $\psi_1$ bound distinct number of punctures, then they intersect at least four times. Thus, $\gamma_1$ cuts off a twice punctured disk $D$ distinct from the twice punctured disk cut off by $\psi_1$. The disk $D$ is a regular neighborhood of a shadow $s$ for $T_i$. Since $\gamma_1$ is a reducing curve for $T_i\cup \T_j$, $s$ is also a shadow for an arc in $T_j$. 

On the other hand, $\psi_1$ is a reducing curve for $T_i \cup \T_k$ and $s$ is a shadow for the 3-bridge unknot component $U$ of $T_i \cup \T_k$. Since $U$ is an unknot, there exists a complete collections of shadows, denoted by $a_i$ and $a_k$, for the arcs in $T_i\cap U$ and $T_k\cap U$ satisfying: $a_i\cup a_k$ is an embedded circle. 
Theorem 1.1(2b) of \cite{hayashi1998heegaard} states that such shadows can be picked so that $s\in a_i$. In particular, there is a shadow $b\in a_k$ for $T_k$ satisfying $s\cap b=\partial s \cap \partial b=\{pt\}$. According to Lemma 6.2 of \cite{meier2017bridge}, the bridge trisection is stabilized, contrary to our assumption.
%
\end{proof}

\begin{theorem}\label{thm:L*_disco_42}
Let $\mc{T}$ be an irreducible and unstabilized $(4;2)$-bridge trisection. Then $\Lcal^*(\mc{T})\geq 12$. 
\end{theorem}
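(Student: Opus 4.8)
\textbf{Plan of proof for Theorem \ref{thm:L*_disco_42}.}

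The goal is to show that for an irreducible, unstabilized $(4;2)$-bridge trisection $\mc{T}$, each of the three distances $d(P^i_{ij},P^i_{ik})$ in $C^*(\Sigma)$ appearing in the definition of $\Lcal^*$ is at least $4$; summing over $i=1,2,3$ then gives $\Lcal^*(\mc{T})\geq 12$. Fix such a pair and write $P^i_{ij}=\gamma\cup f$ and $P^i_{ik}=\psi\cup g$ as in the Notation, where $\gamma$ and $\psi$ each contain exactly one reducing curve $\gamma_1,\psi_1$ and $|\gamma|=|\psi|=b-1=3$. By Proposition \ref{prop_1} the curve $\gamma_1\notin P^i_{ik}$, and by Lemma \ref{lem_1}(i)-(ii) any geodesic has length at least $|\gamma|=3$; so the only way the distance could fail to be $\geq 4$ is if there is a path $\lambda$ of length exactly $3$ in which every curve of $\gamma$ moves exactly once (hence $f=g$ is fixed setwise and every curve of $\psi$ is hit once).

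Assume for contradiction that such a $\lambda$ exists. First I would invoke Lemma \ref{lem:gamma1_moves_first}: since $P^i_{ij}$ has exactly $b-1=3$ even curves, the first A-move of $\lambda$ out of $P^i_{ij}$ cannot carry the even reducing curve $\gamma_1$; so $\gamma_1$ is not moved first, and $\gamma_1$ does not move directly to $\psi_1$. Then Lemma \ref{lem:wheregamma2goes} (using that $F$ is non-split, which holds whenever $\mc{T}$ is irreducible — or one may simply treat the split case separately, where reducibility is immediate) says no curve of $\gamma$ can be sent to a reducing curve of $\psi$ at all. But this is already a contradiction: $\psi_1$ is a curve of $\psi$, and since every curve of $\psi$ is introduced exactly once along $\lambda$, some A-move of $\lambda$ must create $\psi_1$ by removing some curve of $\gamma$ (the curves of $f=g$ never move), so a curve of $\gamma$ is moved to the reducing curve $\psi_1$ — contradicting Lemma \ref{lem:wheregamma2goes}. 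Hence no length-$3$ path of the required type exists, $d(P^i_{ij},P^i_{ik})\geq 4$, and summing gives the claim.

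The main subtlety is the interface between Lemma \ref{lem:wheregamma2goes}, which is stated for an \emph{edge} $e$ with initial endpoint in $P^i_{ij}$, and the situation here, where the A-move creating $\psi_1$ may occur in the middle of $\lambda$ rather than at its start. To bridge this I would argue as in Proposition \ref{lem_7}: because every curve of $\gamma$ moves exactly once and the curves of $f=g$ are fixed, the pants decompositions along $\lambda$ are all of the form $(P^i_{ij}\setminus S)\cup S'$ with $S\subseteq\gamma$ replaced by the corresponding already-placed curves of $\psi$, so each intermediate vertex still lies in $\Pcal_c(T_i)$ and in $\Pcal_c(T_k)$, still has $b-1$ even curves (Lemma \ref{lem_0}), and the move creating $\psi_1$ therefore satisfies the hypotheses of Lemmas \ref{lem:gamma1_moves_first} and \ref{lem:wheregamma2goes} with the intermediate vertex playing the role of $P^i_{ij}$. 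That intermediate vertex is still a disk set for both $T_i$ and $T_k$ and the curve being removed is still one of the (even, non-reducing, or possibly reducing) curves originating in $\gamma$, so the two cited lemmas apply verbatim. Once this reduction is in place, the contradiction is immediate and the theorem follows.
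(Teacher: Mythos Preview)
Your overall strategy matches the paper's: reduce to showing $d(P^i_{ij},P^i_{ik})\geq 4$, assume a length-$3$ path $\lambda$ (so $f=g$ and each $\gamma$-curve moves once to a $\psi$-curve), and derive a contradiction from Lemmas~\ref{lem:gamma1_moves_first} and~\ref{lem:wheregamma2goes}. The gap is in how you justify applying Lemma~\ref{lem:wheregamma2goes} at an \emph{intermediate} vertex.

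First, two small but genuine errors. The claim that every intermediate vertex lies in $\Pcal_c(T_k)$ is false: the curves of $\gamma$ are $c$-reducing for $T_i\cup\T_j$, so they bound $c$-disks in $T_i$ and $T_j$, but there is no reason they bound $c$-disks in $T_k$. (This is also unnecessary; the lemmas you cite only need the vertex in $\Pcal_c(T_i)$.) Second, irreducibility of $\mc{T}$ does \emph{not} imply $F$ is non-split, and a split $F$ does not immediately give a reducible trisection. The disconnected case should instead be handled by Theorem~\ref{thm:L*_disco}, which already gives $\Lcal^*\geq 3b=12$ when $b=4$.

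The more serious gap is your appeal to Lemma~\ref{lem_0} to say each intermediate vertex ``still has $b-1$ even curves''. Lemma~\ref{lem_0} only gives \emph{at least} $b-1$, whereas Lemma~\ref{lem:gamma1_moves_first} requires \emph{exactly} $b-1$. What you actually need is the following chain: among all curves in play, the only even ones outside $f$ are $\gamma_1$ and $\psi_1$; hence any intermediate vertex that does not yet contain $\psi_1$ has exactly $b-1=3$ even curves (namely $\gamma_1,f_1,f_2$); by Lemma~\ref{lem:gamma1_moves_first} applied to that vertex (it lies in $\Pcal_c(T_i)$), $\gamma_1$ cannot be moved to any $c$-disk for $T_i$, in particular not to any curve of $\psi$. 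Therefore $\gamma_1$ is still present at the moment the move creating $\psi_1$ occurs, and only then does the proof of Lemma~\ref{lem:wheregamma2goes} go through at that intermediate vertex (its argument uses only that $\gamma_1$ is present so that $\gamma_1$ and $\psi_1$ become disjoint). Your paragraph asserts the conclusion but supplies the wrong reasons for it.

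For comparison, the paper avoids this generalization entirely: it tracks the moves explicitly, shows $\gamma_1\mapsto\psi_3$ must be the \emph{last} edge, and then applies Lemma~\ref{lem:wheregamma2goes} by symmetry to the reversed first edge out of $P^i_{ik}$ (i.e.\ $\psi_3\mapsto\gamma_1$ sends a $\psi$-curve to a reducing curve in $\gamma$). That keeps all applications of Lemma~\ref{lem:wheregamma2goes} at the endpoints $P^i_{ij}$ and $P^i_{ik}$, where the lemma is literally stated, and sidesteps the need to re-examine its proof.
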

\begin{proof}
The result will follow once we show that $d(P_{ij}^i,P^i_{ik}) \geq 4$. 
We use the notation at the beginning of the sub-subsection. 
By Proposition \ref{prop_1}, we know that each curve in $\gamma$ moves at least once, giving $d(p_{ij}^i,p^j_{ij}) \geq 3.$
By way of contradiction, suppose that $\lambda$ is a path of length 3 in $C^*(\Sigma)$ that connects $P_{ij}^i$ and $P_{ik}^j$. 
In particular, $f=g$ 
and, by Lemma \ref{lem:gamma1_moves_first}, $\gamma_1$ cannot move first. Suppose that $\gamma_2$ moves first. Note that $\gamma_2$ cannot move to $\psi_1$ because of Lemma \ref{lem:wheregamma2goes}. Hence, $\gamma_2$ moves to a cut-curve in $\psi$, say $\gamma_2\mapsto\psi_2$.

As we did in the proof of Lemma \ref{lem:gamma1_moves_first}, the fact that 2-string tangles have a unique compressing disk implies that $\gamma_1$ cannot be moved to $\psi_1$ along $\lambda$. Thus $\gamma_1 \mapsto \psi_3$ and $\gamma_3 \mapsto \psi_1$. By the lack of even curves (as in Lemma \ref{lem:gamma1_moves_first}), we conclude that $\gamma_1$ cannot move before $\gamma_3$. Hence, $\gamma_1\mapsto \psi_3$ is the last move of $\lambda$, contradicting Lemma \ref{lem:wheregamma2goes}.
%
\end{proof}

\subsubsection*{Examples of $(4,2)$-bridge trisections}

\begin{example}[Spin of a 2-bridge link]\label{exam:L*_2b}
Let $F$ be the spin of a 2-bridge link (or knot). By Proposition \ref{prop:spun2b}, $b(F)=4$ and every minimal $(4;c_1,c_2,c_3)$-bridge trisection is irreducible and satisfies $c_i=2$. Thus, by Theorem \ref{thm:L*_disco_42}, we get that $\Lcal^*(F)\geq 12$. We now show the opposite inequality. 
We use the $(4;2)$-bridge trisection $\mc{T}_{MZ}$ for $F$ given in Section 5.1 of \cite{meier2017bridge}. Figure \ref{fig:dualtrefoilpath} shows a picture of the three links $L_i=T_i \cup \T_{i+1}$ for $\mc{T}_{MZ}$ when $L$ a Hopf link. 
For an arbitrary 2-bridge link, figures \ref{fig:dualtrefoilpath} and \ref{fig_spun_hopf_1} will almost be the same except for the long loops on each pants that will then resemble the Conway number of $L$. The sequence of paths in $C^*(\Sigma)$ described in Figure \ref{fig_spun_hopf_1} shows that $\Lcal^*(F)\leq 12$. 
Another way to see the upper bound is as follows. The proof of Corollary 4.5 of \cite{aranda2021bounding} built paths in $\Pcal(\Sigma)$ to conclude that the $\Lcal$-invariant is at most $6d(p/q,0)+6$ where $p/q$ is the Conway number of $L$. Since $\Pcal(\Sigma)$ is a subgraph of $C^*(\Sigma)$, the upper bound is still valid for $\Lcal^*(F)$. Furthermore, any two essential simple closed curves in a 4-holed sphere are connected by an edge in $C^*(\Sigma_{0,4})$. Thus, $\Lcal^*(F)\geq 6\cdot 1 + 6 = 12$. Hence, $\Lcal^*(F)=12$. 
\end{example}

\begin{example}[2-twist spun trefoil]\label{example:yoshi_L*}
Let $F$ be the $k$-twist spun of a $(2,2k-1)$-torus knot. By Theorem 1.2 of \cite{joseph2021bridge}, the meridional rank of $F$ is 2. Moreover, Meier and Zupan were able to produce $(4,2)$-bridge trisection diagrams of $F$ (see Figure 22 of \cite{meier2017bridge}), which implies that $b(F)=4$. Due to an Euler characteristic argument, we have that every minimal $(4;c_1,c_2,c_3)$-bridge trisection is irreducible and satisfies $c_i=2$. Thus, by Theorem \ref{thm:L*_disco_42}, we get that $\Lcal^*(F)\geq 12$. When $k=2$ (that is, when $F$ is the 2-twist spun trefoil), we are able to show the opposite inequality by consider the diagram $10_2$ of the in Yoshikawa's table \cite{yoshikawa1994enumeration}. 

Recall that to get a bridge trisection diagram, one can start by putting a banded unlink in the banded bridge splitting. Figure \ref{fig:Yoshi1} (left) is Yoshikawa's diagram. The black markers represent bands so that we get a banded link diagram in \ref{fig:Yoshi1} (middle), which admits a banded bridge splitting. The corresponding tri-plane diagram is given in Figure \ref{fig:Yoshi1} (right). 
Figures \ref{fig:Yoshi2} and \ref{fig:Yoshi3} show that $\Lcal^*(F)\leq 12$. Hence, the $\Lcal^*$-invariant of a 2-twist spun trefoil is equal to $12$.
\end{example}

\begin{question}
Do all non-trivial $k$-twist spun 2-bridge knots have $\mathcal{L}^*$-invariant equal to $12$?
\end{question}
\begin{figure}[ht!]
\includegraphics[width=12cm]{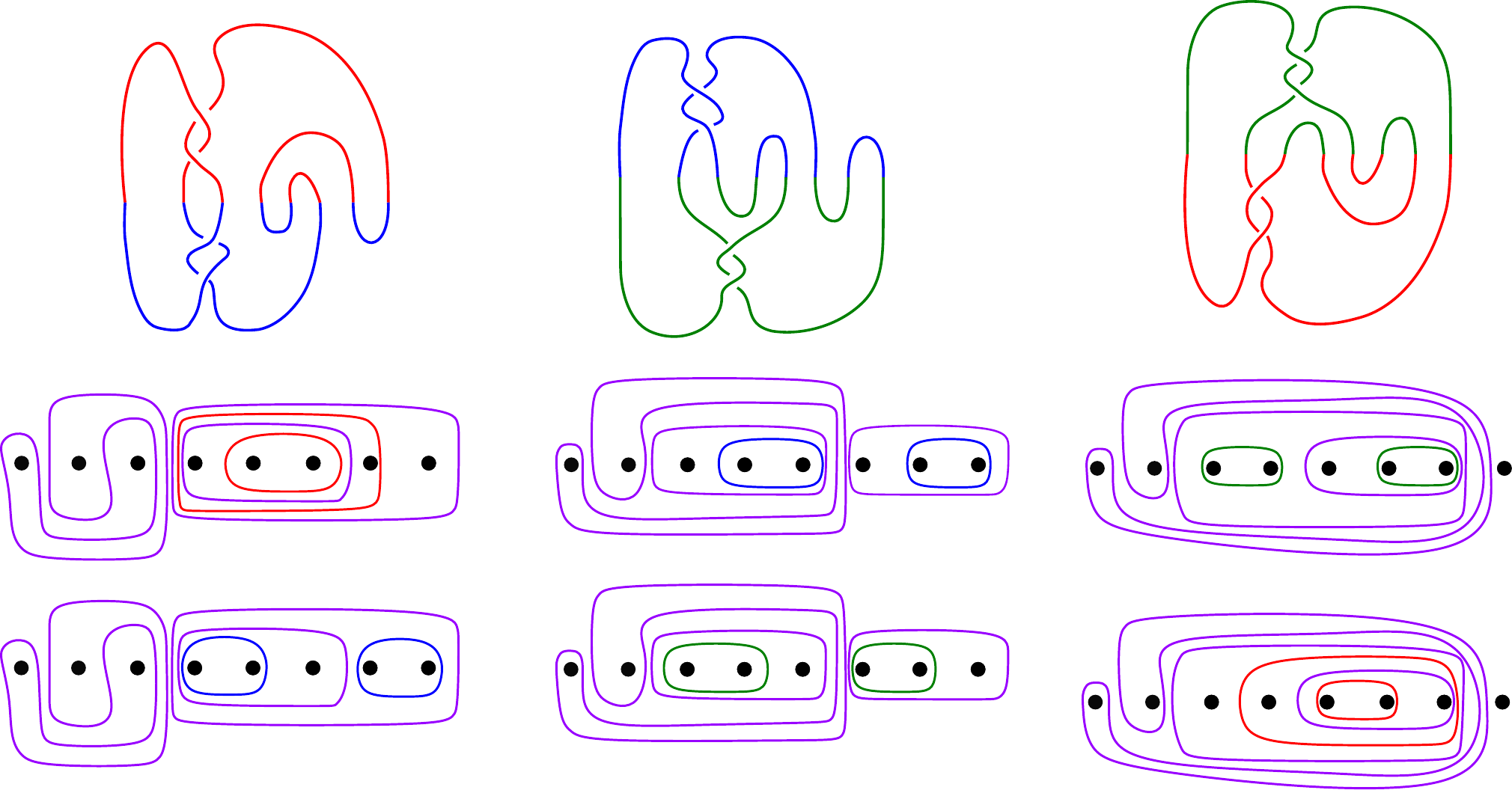}
\centering
\caption{$\Lcal^*$-invariant of the spin of the Hopf link. Each column depicts bridge positions and efficient defining pairs for the unlink $L_i = T_{i}\cup \T_{i+1}$. 
}
\label{fig_spun_hopf_1}
\end{figure}
\begin{figure}[ht!]
\includegraphics[width=15cm]{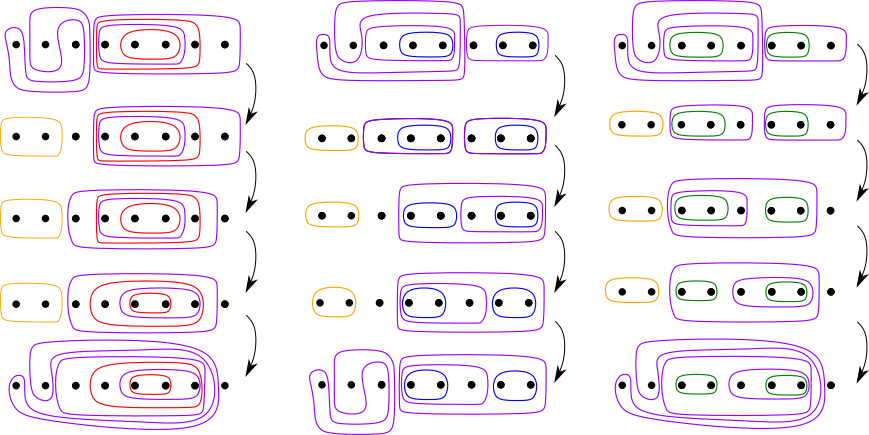}
\centering
\caption{$\Lcal^*$-invariant of the spin of the Hopf link. A path of length four connecting $P^i_{ij}$ and $P^i_{ik}$. 
}
\label{fig:dualtrefoilpath}
\end{figure}
\begin{figure}[ht!]
\includegraphics[width=12cm]{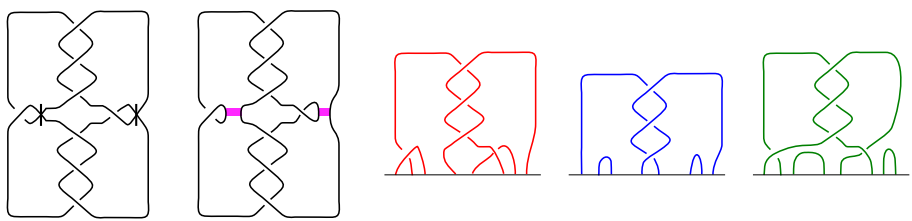}
\centering
\caption{(Left) Yoshikawa's diagram of $10_2$, which represents the 2-twist spun trefoil. (Middle) A banded bridge splitting of $10_2$. (Right) A tri-plane diagram of $10_2$.}
\label{fig:Yoshi1}
\end{figure}

\begin{figure}[ht!]
\includegraphics[width=12cm]{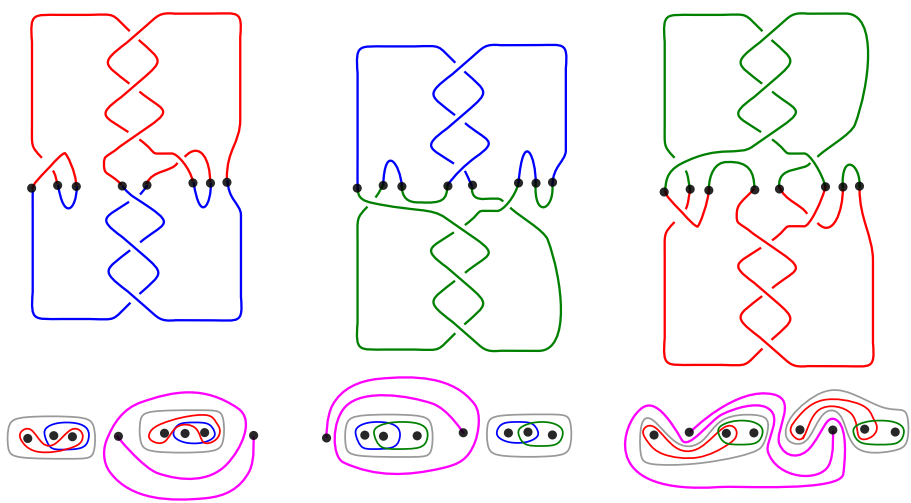}
\centering
\caption{$\Lcal^*$-invariant of the 2-twist spun trefoil. 
Efficient defining pairs for the unlink $T_{i}\cup \T_{i+1}$.}
\label{fig:Yoshi2}
\end{figure}

\begin{figure}[ht!]
\includegraphics[width=12cm]{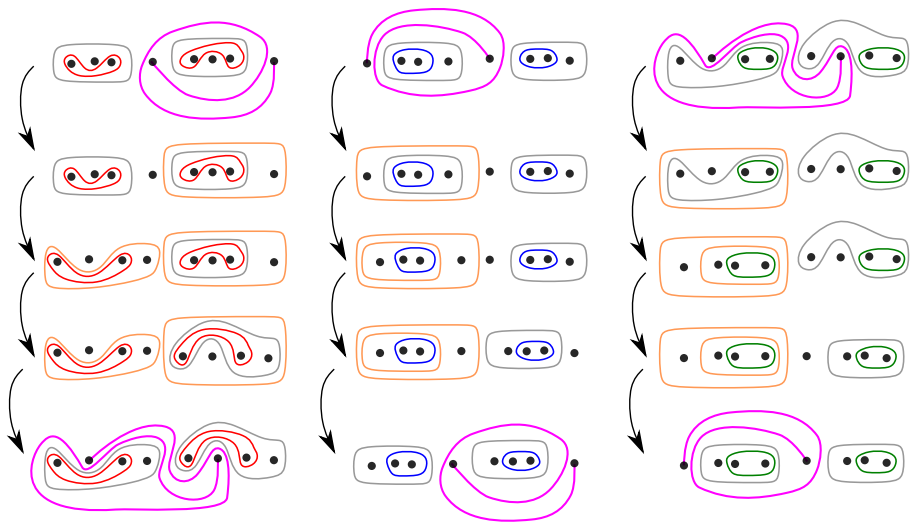}
\centering
\caption{$\Lcal^*$-invariant of the 2-twist spun trefoil. A path 
connecting $P^i_{ij}$ and $P^i_{ik}$.}
\label{fig:Yoshi3}
\end{figure}

\section{Estimates for $\Lcal$-invariants. }\label{section:estimates_L}

In this section we give a lower bound for the $\Lcal$-invariant of $(b,c)$-bridge trisections. The careful reader might notice that these techniques can be used to prove the estimates for general $(b;c_1,c_2,c_3)$-bridge trisections. 

\begin{proposition}\label{prop_at_least_twice}
Let $T$ be irreducible and unstabilized $(b;c)$-bridge trisection. Let $(P^i_{ij}, P^j_{ij})$ and $(P^i_{ik},P^k_{ik})$ be efficient defining pairs for $T_i\cup \T_j$ and $T_i\cup \T_k$, respectively. If $c\geq 2$, then the distance $d(P^i_{ij}, P^i_{ik})$ in $\Pcal(\Sigma)$ is at least $b+c-2$. 
\end{proposition}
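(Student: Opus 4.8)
The plan is to argue by contradiction: I will assume $d(P^i_{ij},P^i_{ik})\le b+c-3$ in $\Pcal(\Sigma)$ and deduce that $\mc{T}$ is reducible or stabilized, contrary to hypothesis. Since $\Pcal(\Sigma)$ is a subgraph of $C^*(\Sigma)$, Theorem \ref{thm_lower_L*} forces the distance to be exactly $b+c-3$, realized by a path $\lambda$ that is simultaneously a geodesic of $C^*(\Sigma)$. The first step is to recover the rigid structure of $\lambda$ by rerunning the counting behind Theorem \ref{thm_lower_L*}. Write $P^i_{ij}=\psi\cup g$ and $P^j_{ij}=\psi\cup g'$ as in Lemma \ref{lem_1}, so $|\psi|=b+c-3$, $|g|=b-c$, and each curve of $\psi$ bounds a $c$-disk in both $T_i$ and $T_j$. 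By Proposition \ref{prop_1} none of these $b+c-3$ curves lies in $P^i_{ik}$, so $\lambda$ must move every curve of $\psi$; as $|\lambda|=b+c-3$, it moves each curve of $\psi$ exactly once, the unique image of each such curve survives to the end of $\lambda$, and every remaining curve, i.e. all of $g$, is fixed. Running the same count on the efficient pair for $T_i\cup\T_k$ shows $\psi':=P^i_{ik}\cap P^k_{ik}$ is disjoint from $P^i_{ij}$, hence from $g$; since $|\psi'|=b+c-3$, $|g|=b-c$, and $P^i_{ik}$ has $2b-3$ curves, this yields $P^i_{ij}=\psi\sqcup g$, $P^i_{ik}=\psi'\sqcup g$, with $\lambda$ carrying $\psi$ bijectively onto $\psi'$ and fixing $g$.

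Now I would bring in the extra control available in the pants complex. Applying Proposition \ref{lem_7} to $P^i_{ij}=\psi\cup g$, $P^i_{ik}=\psi'\cup g$ and the path $\lambda$ gives $|x\cap y|\le 2$ for all $x\in\psi$ and $y\in\psi'$. This is where $c\ge 2$ enters: by Lemma \ref{lem_0} every pants decomposition of $\Sigma\cong\Sigma_{0,2b}$ has at least $b-1$ curves bounding an even number of punctures, and $|g|=b-c$, so at least $(b-1)-(b-c)=c-1\ge 1$ curves of $\psi$ bound an even number of punctures. A $c$-disk boundary that bounds an even number of punctures cannot be the boundary of a cut-disk, so such a curve $\gamma_1\in\psi$ bounds a compressing disk on both the $T_i$- and the $T_j$-side; fix one such genuine reducing curve $\gamma_1$ for $T_i\cup\T_j$.

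The proof then finishes exactly as in Lemma \ref{lem:then_nonmin}, whose argument uses only that the reducing curve meets every curve of $P^i_{ik}$ at most twice. Indeed $|\gamma_1\cap x|=0$ for $x\in g$ (both $\gamma_1$ and $g$ lie in the pants decomposition $P^i_{ij}$) and $|\gamma_1\cap x|\le 2$ for $x\in\psi'$, so $|\gamma_1\cap x|\le 2$ for all $x\in P^i_{ik}$. Pushing $\gamma_1$ along a geodesic from $P^i_{ik}$ to $P^k_{ik}$, which moves the curves of $g$ and fixes $\psi'$ (Lemma \ref{lem_1}), and applying Lemma \ref{lem_6} at each A-move, gives $|\gamma_1\cap x|\le 2$ for every $x\in P^k_{ik}\in\Pcal_c(T_k)$. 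If $\gamma_1$ separates no pair $p\sim_{T_k}q$, then by Proposition \ref{lem_3} it bounds a compressing disk for $T_k$, so it bounds a $c$-disk in all three tangles and $\mc{T}$ is reducible; otherwise Proposition \ref{lem_5} yields a shadow $a$ for $T_k$ with $|a\cap\gamma_1|=1$, and since $\gamma_1$ reduces $T_i\cup\T_j$, the stabilization criterion (Lemma \ref{lem_2}) shows $\mc{T}$ is stabilized. Either outcome contradicts the hypothesis, so $d(P^i_{ij},P^i_{ik})\ge b+c-2$.

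I expect the first paragraph to be the main obstacle: one must verify carefully not only that each curve of $\psi$ moves exactly once, but that its single image persists to the endpoint of $\lambda$, so that $g$ is genuinely a common sub-decomposition of $P^i_{ij}$ and $P^i_{ik}$ and Proposition \ref{lem_7} applies verbatim. The second delicate point is the parity count identifying $\gamma_1$ as a compressing-disk curve rather than merely a $c$-disk curve; this is exactly where $c\ge 2$ is needed, and for $c=1$ the argument correctly collapses, consistent with the unknotted torus of Example \ref{rmrk:unknotted_torus} attaining $\Lcal^*=3(b+c-3)$.
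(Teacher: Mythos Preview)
Your proof is correct and follows essentially the same route as the paper's: establish that a path of length $b+c-3$ must fix $g$ and bijectively carry the common curves $\psi$ of one efficient pair to the common curves $\psi'$ of the other, apply Proposition~\ref{lem_7} to bound $|\gamma_1\cap x|\le 2$ for a reducing curve $\gamma_1$ and all $x\in P^i_{ik}$, and then run the argument of Lemma~\ref{lem:then_nonmin}. The only cosmetic differences are that the paper asserts the existence of $\gamma_1$ directly (rather than via your parity count) and invokes Lemma~\ref{lem:then_nonmin} as a black box, whereas you rerun its proof inline.
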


\begin{proof}
Let $\gamma$, $\psi$, $f$, $g$, and $g'$ be sets of curves in $\Sigma$ such that $P^i_{ij}=\gamma \cup f$, $P^i_{ik}=\psi\cup g$, and $P^k_{ik}=\psi\cup g'$ as in Lemma \ref{lem_1}. By Proposition \ref{prop_1}, $\gamma\cap P^i_{ik}=\emptyset$, so $d_{\Pcal(\Sigma)}(P^i_{ij}, P^i_{ik})\geq |\gamma|=b+c-3$. 
Suppose, by way of contradiction, that $\lambda$ is a path of length $|\gamma|$ in $\Pcal(\Sigma)$ connecting $P^i_{ij}$ with $P^i_{ik}$. Such path must fix $f$ and move each curve in $\gamma$ exactly once. By Proposition \ref{prop_1}, $\psi \cap P^i_{ij}=\emptyset$, so $f=g$ and $|\gamma|=|\psi|$. 
Let $\gamma_1\in \gamma$ be a reducing curve for $T_i\cup \T_j$, this exists since $c\geq 2$. By Proposition \ref{lem_7} applied to the path $\lambda$, $|\gamma_1\cap x|\leq 2$ for all $x\in P^i_{ik}$. 
%
By Lemma \ref{lem:then_nonmin}, $\mc{T}$ is reducible or an stabilization. 
\end{proof}

\begin{theorem}\label{thm_lower_L}
Let $\mc{T}$ be a $(b;c)$-bridge trisection with $c\geq 2$. If $\mc{T}$ is irreducible and unstabilized, then $\Lcal(\mc{T})\geq 3(b+c-2)$. 
\end{theorem}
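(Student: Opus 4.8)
The plan is to deduce the lower bound on $\Lcal(\mc T)$ directly from Proposition \ref{prop_at_least_twice}, in complete parallel with the proof of Theorem \ref{thm_lower_L*}. Recall that by Definition \ref{definition:star} (in its pants-complex incarnation, as recorded in the Remark following it), $\Lcal(\mc T)$ is the minimum over all choices of efficient defining pairs $(P^i_{ij},P^j_{ij})$ of the cyclic sum
\[
d_{\Pcal(\Sigma)}(P^1_{12},P^1_{31})+d_{\Pcal(\Sigma)}(P^2_{12},P^2_{23})+d_{\Pcal(\Sigma)}(P^3_{23},P^3_{31}).
\]
Each of the three summands is a distance of the form $d_{\Pcal(\Sigma)}(P^i_{ij},P^i_{ik})$ between two efficient defining pairs sharing the tangle $T_i$. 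Since $\mc T$ is irreducible and unstabilized and $c\geq 2$, Proposition \ref{prop_at_least_twice} applies to each such pair and gives $d_{\Pcal(\Sigma)}(P^i_{ij},P^i_{ik})\geq b+c-2$. Summing the three estimates yields $\Lcal(\mc T)\geq 3(b+c-2)$, as claimed.

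First I would note that the hypotheses of Theorem \ref{thm_lower_L} — $(b;c)$-bridge trisection, $c\geq 2$, irreducible, unstabilized — are exactly the hypotheses of Proposition \ref{prop_at_least_twice}, so there is nothing to check before invoking it. Second I would make explicit that the bound in Proposition \ref{prop_at_least_twice} is uniform: it holds for \emph{every} choice of efficient defining pairs, not just for some optimal one; therefore it persists after taking the minimum that defines $\Lcal(\mc T)$. Third, I would simply add the three lower bounds. One small point worth spelling out: the admissibility hypothesis $\chi(\Sigma)\leq -2$ in Definition \ref{definition:star} is automatic here, since for a $(b;c)$-bridge trisection the bridge surface $\Sigma$ is $\Sigma_{0,2b}$ with $b\geq c+1\geq 3$ (irreducibility forces $b>c$ when $c\geq 1$), so $\chi(\Sigma)=2-2b\leq -4$; hence $\Lcal(\mc T)$ is given by the distance formula and not set to $0$ by fiat.

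I do not expect any genuine obstacle: all the work has been front-loaded into Proposition \ref{prop_at_least_twice} (which in turn rests on Lemma \ref{lem_1}, Proposition \ref{lem_7}, and Lemma \ref{lem:then_nonmin}). The only thing to be careful about is bookkeeping — making sure that each of the three pants-complex distances appearing in the definition of $\Lcal(\mc T)$ really is of the form $d_{\Pcal(\Sigma)}(P^i_{ij},P^i_{ik})$ with a common tangle index $i$, so that Proposition \ref{prop_at_least_twice} is applicable to all three summands simultaneously. Inspecting the cyclic order $P^1_{12},P^2_{12},P^2_{23},P^3_{23},P^3_{31},P^1_{31}$, the three legs are $(P^2_{12},P^2_{23})$, $(P^3_{23},P^3_{31})$, and $(P^1_{31},P^1_{12})$, each sharing a single superscript, so the application is legitimate. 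Thus the proof is a two-line deduction.

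\begin{proof}
By the Remark following Definition \ref{definition:star}, $\Lcal(\mc T)$ is the minimum over all choices of efficient defining pairs of
\[
d_{\Pcal(\Sigma)}(P^1_{12},P^1_{31})+d_{\Pcal(\Sigma)}(P^2_{12},P^2_{23})+d_{\Pcal(\Sigma)}(P^3_{23},P^3_{31}).
\]
Since $\mc T$ is irreducible, unstabilized, and $c\geq 2$, Proposition \ref{prop_at_least_twice} shows that each of the three distances above is at least $b+c-2$, regardless of the chosen efficient defining pairs. Adding the three bounds gives $\Lcal(\mc T)\geq 3(b+c-2)$.
\end{proof}
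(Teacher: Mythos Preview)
Your proposal is correct and follows exactly the paper's approach: the paper's proof consists of the single sentence ``This follows from Proposition \ref{prop_at_least_twice},'' and you have simply unpacked that citation with the appropriate bookkeeping. Your additional remarks about admissibility and the matching of superscripts are accurate and harmless elaborations.
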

\begin{proof} 
This follows from Proposition \ref{prop_at_least_twice}. 
\end{proof}

\begin{proposition}\label{prop_improved_lower_bound_c2}
Let $T$ be irreducible and unstabilized $(b;2)$-bridge trisection. Let $(P^i_{ij}, P^j_{ij})$ and $(P^i_{ik},P^k_{ik})$ be efficient defining pairs for $T_i\cup \T_j$ and $T_i\cup \T_k$, respectively. Then the distance $d(P^i_{ij}, P^i_{ik})$ in $\Pcal(\Sigma)$ is at least $b+1$. 
\end{proposition}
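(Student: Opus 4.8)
The plan is to improve Proposition \ref{prop_at_least_twice} by one unit. Setting $c=2$ there already gives $d_{\Pcal(\Sigma)}(P^i_{ij},P^i_{ik})\ge b$, so it suffices to rule out a path $\lambda$ of length exactly $b$ in $\Pcal(\Sigma)$ joining $P^i_{ij}=\gamma\cup f$ to $P^i_{ik}=\psi\cup g$. I keep the notation of the subsection: $|\gamma|=|\psi|=b-1$ and $|f|=|g|=b-2$, $\gamma$ and $\psi$ each contain a single honest reducing curve $\gamma_1,\psi_1$, and $\gamma_1,\psi_1$ are even while each of $P^i_{ij}$, $P^i_{ik}$ carries the minimal number $b-1$ of even curves (Lemma \ref{lem_0}).

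First I would record the shape of a hypothetical $\lambda$. By Proposition \ref{prop_1} every curve of $\gamma$ is disjoint from $P^i_{ik}$, so each of the $b-1$ curves of $\gamma$ must be removed at least once; with only $b$ A-moves available this leaves exactly one ``spare'' move, and so along $\lambda$ either (I) no curve is moved twice --- then each curve of $\gamma$ moves once and a single curve of $f$ is moved (hence replaced) --- or (II) exactly one curve $\gamma_s\in\gamma$ is moved twice, every other curve of $\gamma$ moves once, and $f$ is fixed.

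The engine is the mechanism of Proposition \ref{prop_at_least_twice}: if I can show that the reducing curve $\gamma_1$ meets every curve of $P^i_{ik}$ at most twice (and lies in the requisite way relative to $P^k_{ik}$), then Lemma \ref{lem:then_nonmin} applied to $r=\gamma_1$ and the efficient pair $(P^i_{ik},P^k_{ik})$ forces $\mc T$ to be reducible or stabilized, a contradiction. To obtain such control I would propagate the bound ``$|\gamma_1\cap\,\cdot\,|\le 2$'' along $\lambda$ via Lemma \ref{lem_6}, which is legitimate at an A-move as long as $\gamma_1$ is disjoint from the curve being moved. In case (I) every curve that ever moves lies in $P^i_{ij}$, hence is disjoint from $\gamma_1$ throughout, so the propagation runs verbatim and yields the contradiction. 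In case (II) with $s\ne 1$ the same is almost true: the only moved curves not manifestly disjoint from $\gamma_1$ lie on the two-step trajectory of $\gamma_s$, and I would argue --- using that the two ends of that trajectory lie in $P^i_{ij}$ and in $P^i_{ik}$ and are disjoint from $\gamma_1$, plus a $4$-holed-sphere analysis --- that $\gamma_1$ can be isotoped off that trajectory and the propagation again goes through.

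The genuinely delicate case is (II) with $s=1$, i.e.\ when the spare move is spent on $\gamma_1$ itself; this is where the pants-complex distance $b$ sits exactly at the threshold, and it must be handled by parity rather than by intersection bounds. Since $P^i_{ij}$ and $P^i_{ik}$ both realize the minimal number $b-1$ of even curves, Lemma \ref{lem:gamma1_moves_first} forbids the first A-move of $\lambda$ from carrying the even curve $\gamma_1$ onto any $c$-disk curve of $T_i$ or $T_j$, and --- reversing $\lambda$ --- forbids its last A-move from producing $\psi_1$ out of a $c$-disk curve of $T_i$ or $T_k$; combined with the uniqueness of compressing disks for $2$-string trivial tangles (the device already used in the proofs of Lemma \ref{lem:gamma1_moves_first} and Theorem \ref{thm:L*_disco_42}), this pins down which A-moves may move $\gamma_1$ and place $\psi_1$. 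Tracking the count of even curves along $\lambda$ --- it begins and ends at $b-1$, never drops below $b-1$ by Lemma \ref{lem_0}, and changes by at most one per A-move --- I expect to corner the two moves of $\gamma_1$ into an impossible configuration, paralleling the endgame of Theorem \ref{thm:L*_disco_42}. The main obstacle is precisely this bookkeeping: making the ``first move / last move'' applications of the disk-set lemmas rigorous for a path in $\Pcal(\Sigma)$ rather than inside a disk set, and controlling the A-moves that may intervene between the two moves of $\gamma_1$.
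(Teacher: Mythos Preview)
Your overall architecture is exactly the paper's: reduce to ruling out a path $\lambda$ of length $b$, split into the case where a curve of $f$ moves (your (I), the paper's Case~1) versus $f=g$ fixed with a single pivot $\theta$ (your (II), the paper's Case~2), and try to feed $\gamma_1$ into Lemma~\ref{lem:then_nonmin}. But two of the steps you treat as routine are precisely where the work lies.

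\textbf{Case (I).} Propagation along $\lambda$ does give $|\gamma_1\cap z|\le 2$ for every $z\in P^i_{ik}$, but that is not the hypothesis of Lemma~\ref{lem:then_nonmin}: one needs $\gamma_1$ \emph{disjoint} from the moving set $g=P^i_{ik}-P^k_{ik}$ (this is what the second propagation, from $P^i_{ik}$ to $P^k_{ik}$, actually uses inside that lemma's proof, and it is how Lemma~\ref{lem_6} is applied throughout the paper). In your case~(I) the set $g$ equals $(f-\{f_1\})\cup\{g_1\}$, and nothing you have said rules out $\gamma_1\cap g_1\ne\emptyset$. The paper splits on exactly this: if $\gamma_1\cap g_1=\emptyset$ then Lemma~\ref{lem:then_nonmin} applies as you say; if not, then (symmetrically) $\psi_1\cap f_1\ne\emptyset$, and a parity count at the A-move $f_1\mapsto y$ forces $y\in\{g_1,\psi_1\}$, which is then killed by Lemma~\ref{lem:gamma1_moves_first}. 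Your write-up skips this sub-case entirely.

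\textbf{Case (II).} Here your plan drifts from what actually happens. The paper does \emph{not} try to push propagation through the pivot trajectory for $s\ne 1$; instead it shows, using Proposition~\ref{lem_7} on the two sub-paths on either side of any pants $P\ni\theta$, that $\gamma_1\notin P$ and $\psi_1\notin P$, and then a parity count on $P$ forces $\theta$ to be even and pins the pivot to be exactly $\gamma_1\mapsto\theta\mapsto\psi_1$ (so $s=1$ is \emph{forced}, not a separate sub-case to be handled). After that point there is no further parity obstruction---the even-curve count is already saturated---so your proposed ``parity bookkeeping, paralleling Theorem~\ref{thm:L*_disco_42}'' cannot close the argument. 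What the paper does instead is geometric: it tracks, through every A-move between $\gamma_1\mapsto\theta$ and $\theta\mapsto\psi_1$, the four boundary curves of the relevant $4$-holed spheres together with two shadow arcs for $T_i$ (Claim~1 and Figures~\ref{fig:shadows}--\ref{fig:shadows_2}), and from this concludes $|\gamma_1\cap\psi_1|\le 2$. Only then does Lemma~\ref{lem:then_nonmin} apply (now $f=g$, so $\gamma_1\cap g=\emptyset$ is automatic). This $4$-holed-sphere/shadow analysis is the heart of the proof and is absent from your sketch; the ``impossible configuration'' you are hoping parity will produce does not exist, because a length-$b$ path with $\gamma_1\mapsto\theta\mapsto\psi_1$ is perfectly consistent with all the parity constraints---what rules it out is the intersection bound $|\gamma_1\cap\psi_1|\le 2$.
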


\begin{proof} 
We use the same notation as in Proposition \ref{prop_at_least_twice}. The condition $c=2$ implies that there is at most one reducing curve in $\gamma$ and $\psi$. 
By Lemma \ref{lem_1}, $|f|=b-2$ and $|\gamma|=b-1$. Thus, by Lemma \ref{lem_0}, there are exactly $b-2$ cut curves in $\gamma$ (resp. $\psi$) and one reducing curve in $\gamma$ (resp. $\psi$). Let $\gamma_1\in \gamma$ and $\psi_1\in \psi$ be the reducing curves. 
Proposition \ref{prop_at_least_twice} shows that the distance between $P^i_{ij}$ and $P^i_{ik}$ is at most $b$. Suppose, by way of contradiction, that $\lambda$ is a path in $\Pcal(\Sigma)$ of length $b=|\gamma|+1$. By Proposition \ref{prop_1}, each curve in $\gamma$ must move at least once. We have two cases depending on whether or not only curves in $\gamma$ move. 

\textbf{Case 1.} A curve in $f$ moves. Let $f_1\in f$ and $g_1\in g$ be the two extra curves that move along $\lambda$. Every curve in $\gamma \cup \{f_1\}$ moves exactly once to $\psi\cup \{g_1\}$, and the set $f-\{f_1\}=g-\{g_1\}$ is fixed. 
Suppose that the reducing curve $\gamma_1$ is disjoint from $g_1$. In particular, $\gamma_1$ is disjoint from all curves in $g$. Proposition \ref{lem_7} applied to the path $\lambda$ implies that $|\gamma_1\cap x|\leq 2$ for all $x\in P^i_{ik}$. Here, 
Lemma \ref{lem:then_nonmin} implies that $\mc{T}$ is a reducible or an stabilization. 
Hence, $\gamma_1$ intersects $g_1$. Analogously, $\psi_1$ must intersect $f_1$. 
This implies that the A-move involving $\gamma_1$ must occur before $g_1$ appears, and the A-move involving $\psi_1$ must occur after $f_1$ moves. 

We focus on the A-move containing $f_1$. Let $E$ be the 4-holed sphere where the A-move $f_1 \mapsto y$ occurs and let $P$ be the pants decomposition containing $f_1$. 
There are at most $b+1$ curves in $P$ bounding an even number of punctures: $\gamma_1$, $\psi_1$, $f_1$, $g_1$ and $f_i=g_i$ for $2\leq i\leq b-2$. Since, $g_1\cap \gamma_1\neq \emptyset$ and $f_1 \cap \psi_1 \neq \emptyset$, we know that $\psi_1\not \in P$ and at most one of $\{g_1, \gamma_1\}$ belongs to $P$. 
By Lemma \ref{lem_0}, every pants decomposition has at least $b-1$ curves bounding even punctures. Thus, one of $\{g_1, \gamma_1\}$, say $x$, belongs to $P$ and $P$ has exactly $b-1$ even curves. This implies that $y$ is not a cut curve and $y\in \{g_1, \psi_1\}$.  
The edge $f_1 \mapsto y$ satisfies the hypothesis of Lemma \ref{lem:gamma1_moves_first}. By the lemma, such move cannot occur. 

\textbf{Case 2.} The set $f=g$ is fixed. Let $\theta$ be the pivot curve. In other words, for some $x\in \gamma$, $y\in \psi$, each curve in $\gamma-\{x\}$ moves once to the set $\psi-\{y\}$, $x$ moves to $\theta$, and $\theta$ moves to $y$. 
Let $P$ be the pants decomposition in $\lambda$ 
that contains $\theta$. Let $\lambda_j$ be the sub-path of $\lambda$ connecting $P^i_{ij}$ and $P$, and let $\lambda_k$ be the path connecting $P$ and $P^i_{ik}$. Observe that $\lambda_i$ and $\lambda_j$ satisfy the hypothesis of Lemma \ref{lem_7}. In particular, if $\gamma_1\in P$, then $|\gamma_1 \cap z|\leq 2$ for all $z\in \psi$. Notice that, since $g$ is fixed, $\gamma_1$ is disjoint from each curve in $g$. 
Then, by Lemma \ref{lem:then_nonmin}, $\mc{T}$ is reducible or stabilized. 
Hence, $\gamma_1\not\in P$ and, by a similar argument, $\psi_1 \not \in P$. 
On the other hand, there are $b-1$ possible curves in $P$ bounding an even number of punctures: $\{\theta\}\cup f$. Since there are at least $b-1$ even curves in $P$ (Lemma \ref{lem_0}), $\theta$ must bound an even number of punctures. By the same reasoning, $\theta$ cannot move to a cut curve and so $\theta$ must move to $\psi_1$. We can also conclude that $\gamma_1$ moves to $\theta$. 

By the previous paragraph, $|\gamma_1 \cap y|\leq 2$ for all $y\in \psi$ that appear before $\psi_1$. Suppose $|\gamma_1\cap \psi_1|\leq 2$ and let $x\mapsto y$ be the edge after $\theta\mapsto \psi_1$. Since $x\in \gamma$, $x$ and $\gamma_1$ are disjoint. 
Hence, every curve $z$ in the pants decomposition containing $x$ satisfy $|\gamma_1 \cap z|\leq 2$. Thus, by Lemma \ref{lem_6}, $|\gamma_1 \cap y|\leq 2$. By induction, one can show that $|\gamma_1 \cap y|\leq 2$ for all $y\in \psi$. Now, since $g=f$, $\gamma_1$ is disjoint from $g$. Hence, 
by Lemma \ref{lem:then_nonmin}, $\mc{T}$ is reducible or stabilized.
Therefore, 
\textbf{if we can show that $|\gamma_1 \cap \psi_1|\leq 2$, the proof of this Proposition will be over. }

Let $E$ be the 4-holed sphere where the A-move $\gamma_1 \mapsto \theta$ occurs and let $Q$ and $P$ be the pants decompositions containing $\gamma_1$ and $\theta$, respectively. Since both curves are even and $P$ has exactly $b-1$ even curves, all four boundaries of $E$ bound an odd number of punctures; i.e., they are cut curves for $T_i$ or boundaries of small disks around one puncture. 
We then apply Proposition \ref{lem_5} with the pants $Q$ and the loop $\theta$. 
There exist punctures $\{p_1,p_2,p_3,p_4\}$ and shadows $\{a,b\}$ for arcs in $T_i$ satisfying $|a\cap \theta|=1$ and $|b\cap \theta|=1$ (see Figure \ref{fig:shadows}(a)). 
Since the curves in $\partial E=\{\partial_l\}_{l=1}^4$ are odd, by the same Proposition, any two other punctures connected by $T_i$ have shadows disjoint from $E$. 

\textbf{Claim 1.} Let $R$ be the pants decomposition containing $\theta$ corresponding to a vertex of $\lambda$ between $\gamma_1\mapsto \theta$ and $\theta \mapsto \psi_1$. There are four curves in $R$, denoted by $\{\partial_l\}_{l=1}^4$ such that (0) $\{\theta, \partial_1, \partial_4\}$ and $\{\theta, \partial_2, \partial_3\}$ are the boundaries of pairs of pants in $\Sigma -R$, (1) $\partial_l$ separates $p_l$ from $\{p_s: s\neq l\}$, (2) $|a\cap \partial_l|$ is one for $l=1,2$ and zero for $l=3,4$, (3) $|b\cap \partial_l|$ is one for $l=3,4$ and zero for $l=1,2$, (4) $|\gamma_1 \cap \partial_l|\leq 2$ for all $l=1,2,3,4$, and (5) $\gamma_1$ intersects at most one curve from the sets $\{\partial_1, \partial_3\}$ and $\{\partial_2, \partial_4\}$. 

If $R=P$, this is the setup discussed in the previous paragraph (Figure \ref{fig:shadows}(a)). 
Suppose, by induction, that the Claim is true for the first $t\geq 0$ pants decompositions after $P$. 
Let $x\mapsto y$ be the $t$-th edge after $P$ with $x\in R$ and suppose $x\in \gamma-\{\gamma_1\}$, $y\in \psi-\{\psi_1\}$. By the inductive hypothesis, $R$ satisfies (0)-(4) with respect to some curves $\{\partial_l\}_{l=1}^4$. We want to show the claim for $R'=(R-\{x\})\cup \{y\}$. Let $E'$ be the 4-holed sphere corresponding to this A-move, denote the boundaries of $E'$ by $\{\delta_l\}_{l=1}^4$. 

Suppose first that $\theta$ is a component of $\partial E'$. Without loss of generality, assume that $E'$ is on the same side of $\theta$ than $\{p_1, p_4\}$. 
Since $x\in \gamma$ is a cut-curve, it must separate $p_1$ and $p_4$. Without loss of generality, assume that $p_4$ and $\theta$ are on the same side of $x$ in $E'$. After a surface diffeomorphism, we can depict the curves in $E$ as in Figure \ref{fig:shadows}(b).  Moreover, since $\{\partial_l\}_{l=1}^4$ and $\partial E'$ are curves in $R$, $x$ has to be equal to $\partial_1$. 
In particular, $x=\partial_1$ is disjoint from $b$ and intersects $a$ once. Since $x$ bounds an odd number of punctures, $\delta_4$ has to be even, so $\delta_4 \in f=g$ and the 1-manifolds $\theta$, $\gamma_1$, $a$ and $b$ are disjoint from $\partial_4$.  
On the other hand, we know that $|\gamma_1\cap \theta|=2$, $\gamma_1 \cap x=\emptyset$, $|\gamma_1 \cap \partial 4|\leq 2$, and $\gamma_1$ separates $\{\partial_1, \partial_4\}$. Thus, after a surface diffeomorphism, we can draw the curves in $E'$ as in Figures \ref{fig:shadows}(b) or (c). To end, since $y\in \psi$ is a cut-curve for $T_i$, $y$ separates $\{p_1,p_4\}$. Hence, since $|\gamma_1 \cap y|\leq 2$, the condition $|x\cap y|=2$ forces $y$ to look as in Figures \ref{fig:shadows}(b)-(c). 
Let $\partial'_1=\delta_2$, $\partial'_2=\partial_2$, $\partial'3_3=\partial_3$, and $\partial'_4=y$. One can see that $\{\partial'_l\}_{l=1}^4$ satisfy conditions (0)-(5) with respect to the pants decomposition $R'=(R-\{x\})\cup \{y\}$, as desired. 

Suppose now that $\theta$ is not a component of $\partial E'$. Conditions (0) and (1) imply that $x\not\in \{\partial_l\}_{l=1}^4$ and that at most one curve in $\{\partial_l\}_{l=1}^4$ is a component of $\partial E'$. In particular, the same set of curves $\{\partial_l\}_{l=1}^4$ works for $R'$. This concludes the proof of Claim 1. 
\begin{figure}[ht!]
\labellist \small\hair 2pt  
\pinlabel {(a)} at 01 95
\pinlabel {$a$} at 50 75
\pinlabel {$b$} at 51 26
\pinlabel {{\color{red}$\gamma_1$}} at 85 90
\pinlabel {$\theta$} at 42 8

\pinlabel {(b)} at 115 95
\pinlabel {$\delta_4$} at 182 20
\pinlabel {$a$} at 155 75
\pinlabel {$b$} at 140 40
\pinlabel {$y$} at 210 65
\pinlabel {$x$} at 205 45
\pinlabel {{\color{red}$\gamma_1$}} at 115 5

\pinlabel {(c)} at 225 95
\pinlabel {$\delta_4$} at 290 20
\pinlabel {$a$} at 263 75
\pinlabel {$b$} at 248 40
\pinlabel {$y$} at 318 65
\pinlabel {$x$} at 313 45
\pinlabel {{\color{red}$\gamma_1$}} at 230 40

\scriptsize 

\pinlabel {$p_1$} at 24 75
\pinlabel {$p_2$} at 70 75
\pinlabel {$p_3$} at 24 25
\pinlabel {$p_4$} at 70 25
\pinlabel {$\partial_1$} at 35 57
\pinlabel {$\partial_2$} at 85 62
\pinlabel {$\partial_3$} at 15 34
\pinlabel {$\partial_4$} at 83 32

\pinlabel {$\delta_1=\theta$} at 137 70
\pinlabel {$\delta_2=\partial_1$} at 183 70
\pinlabel {$\delta_3=\partial_4$} at 137 20

\pinlabel {$\delta_1=\theta$} at 245 70
\pinlabel {$\delta_2=\partial_1$} at 291 70
\pinlabel {$\delta_3=\partial_4$} at 245 20

\endlabellist \centering
\includegraphics[width=15cm]{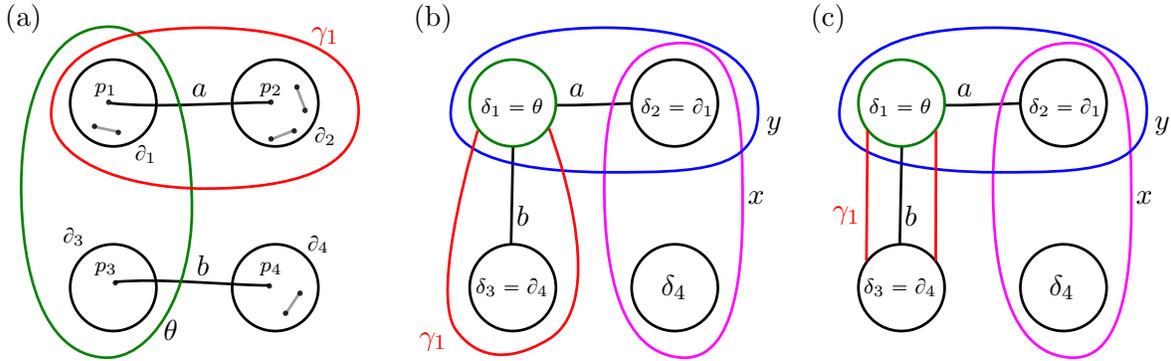}
\centering
\caption{Close look at the curves in $E$ and $E'$.}
\label{fig:shadows}
\end{figure}

Let $R$ be the pants decomposition containing $\theta$ that corresponds to the A-move $\theta\mapsto \psi_1$. By Claim 1, there are curves $\{\partial_l\}_{l=1}^4$ in $R$ satisfying (0)-(5). Moreover, the boundaries of the 4-holed $E''$ corresponding to this A-move are $\{\partial_l\}_{l=1}^4$. 
Recall that $\psi_1$ bounds a compressing disk for $T_i$ and that $p_1\sim_i p_2$. Thus, $\psi_1$ is a curve in $E''$ bounding $\{\partial_1, \partial_2\}$. The condition $|\theta\cap \psi_1|=2$ forces $\psi_1$ to have slope $n/1$ for some $n\in \mathbb{Z}$ (See Figure \ref{fig:shadows_2}(a)). 
By Proposition \ref{lem_5} applied to the pants decomposition $(R-\{\theta\})\cup \{\psi_1\} \in \Pcal_c(T_i)$ and the curve $\theta$, there are shadows $\{a',b'\}$ for arcs in $T_i$ such that conditions (1)-(3) of Claim 1 are satisfied, (4') $\psi_1 \cap (a'\cup b')=\emptyset$, and (5') $|a'\cap \theta|=|b'\cap \theta|=1$ (See Figure \ref{fig:shadows_2}(a)). 
By the same proposition, all other shadows for $T_i$ can be chosen to be disjoint from $E''$. 
Notice that $\{a,b\}$ and $\{a',b'\}$ are shadows for the same arcs in $T_i$. Thus, together, must form a collection of shadows for a 2-component unlink inside $T_i \cup \T_i$. This is impossible 
unless $a\cup b$ and $a'\cup b'$ can be made isotopic inside $E''$. In other words, $\psi_1$ must be disjoint from $a\cup b$, and so it has slope $0/1$ in $E''$ (See Figure \ref{fig:shadows_2}(b)). To end, Recall that the curve $\gamma_1$ satisfies conditions (4), (5), $\gamma_1 \cap (a\cup b)=\emptyset$, and $|\gamma_1 \cap \theta|=2$. There are finitely many ways to draw such $\gamma_1$ curve in Figure $E''$ (See Figure \ref{fig:shadows_2}(b)-(c)). In all such, notice that $|\gamma_1\cap \psi_1|$ is at most two. 
We then reach a contradiction using the argument in paragraph 2 of this case. This finishes the proof of Proposition \ref{prop_improved_lower_bound_c2}.

\begin{figure}[ht!]
\labellist \small\hair 2pt  
\pinlabel {(a)} at -3 105
\pinlabel {$a$} at 60 88
\pinlabel {$a'$} at 100 65
\pinlabel {$b$} at 60 39
\pinlabel {$b'$} at 70 59
\pinlabel {$\theta$} at 32 8
\pinlabel {$\psi_1$} at 57 3

\pinlabel {(b)} at 122 105
\pinlabel {$a$} at 181 87
\pinlabel {$b$} at 180 38
\pinlabel {$\psi_1$} at 220 65
\pinlabel {$\theta$} at 170 18
\pinlabel {{\color{red} $\gamma_1$}} at 188 64

\pinlabel {(c)} at 235 105
\pinlabel {$a$} at 291 87
\pinlabel {$b$} at 290 38
\pinlabel {$\psi_1$} at 335 68
\pinlabel {$\theta$} at 280 18
\pinlabel {{\color{red} $\gamma_1$}} at 335 48

\scriptsize 

\pinlabel {$p_1$} at 27 89
\pinlabel {$p_2$} at 78 89
\pinlabel {$p_3$} at 32 32
\pinlabel {$p_4$} at 86 32 
\pinlabel {$\partial_1$} at 24 78
\pinlabel {$\partial_2$} at 80 78
\pinlabel {$\partial_3$} at 22 28
\pinlabel {$\partial_4$} at 78 28

\pinlabel {$\partial_1$} at 146 80
\pinlabel {$\partial_3$} at 152 25
\pinlabel {$\partial_2$} at 212 75
\pinlabel {$\partial_4$} at 202 28
\pinlabel {$p_1$} at 158 88
\pinlabel {$p_3$} at 150 38
\pinlabel {$p_2$} at 195 88
\pinlabel {$p_4$} at 195 38

\pinlabel {$\partial_1$} at 258 80
\pinlabel {$\partial_3$} at 264 25
\pinlabel {$\partial_2$} at 326 77
\pinlabel {$\partial_4$} at 314 28
\pinlabel {$p_1$} at 270 88
\pinlabel {$p_3$} at 262 38
\pinlabel {$p_2$} at 307 88
\pinlabel {$p_4$} at 307 38

\endlabellist \centering
\includegraphics[width=15cm]{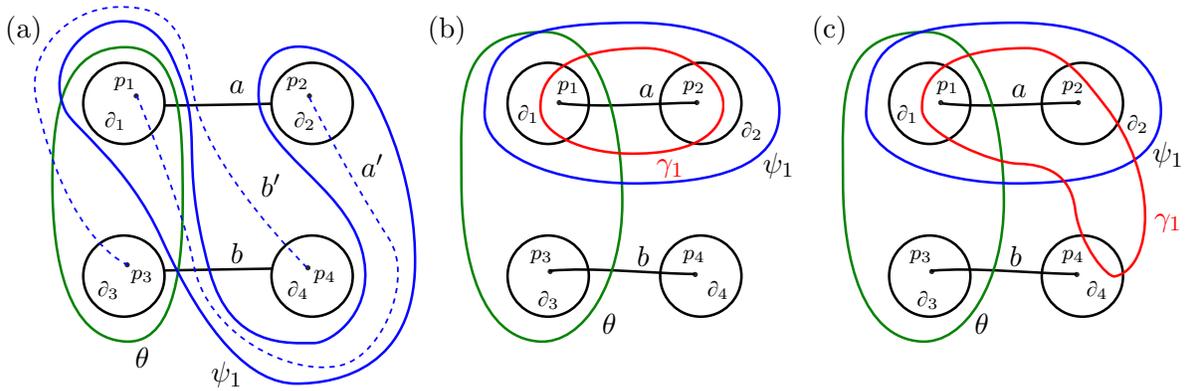}
\centering
\caption{Close look at the curves in $E''$.}
\label{fig:shadows_2}
\end{figure}
\end{proof}

$\quad$\\$\quad$

\begin{theorem}\label{thm_lower_L_c2}
Let $\mc{T}$ be a $(b;2)$-bridge trisection for an embedded surface $F\subset S^4$. If $\mc{T}$ is irreducible and unstabilized, then $\Lcal(\mc{T})\geq 3(b+1)$. 
\end{theorem}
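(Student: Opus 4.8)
The plan is to deduce the theorem immediately from Proposition \ref{prop_improved_lower_bound_c2}, in exactly the way that Theorem \ref{thm_lower_L} followed from Proposition \ref{prop_at_least_twice}. Recall that, by Definition \ref{definition:star} with the pants-complex metric $d_{\Pcal(\Sigma)}$ substituted for $d$, the invariant $\Lcal(\mc{T})$ is the minimum over all choices of efficient defining pairs $\left(P^i_{ij},P^j_{ij}\right)$ of the quantity
\[
d_{\Pcal(\Sigma)}\!\left(P^1_{12},P^1_{31}\right)+d_{\Pcal(\Sigma)}\!\left(P^2_{12},P^2_{23}\right)+d_{\Pcal(\Sigma)}\!\left(P^3_{23},P^3_{31}\right).
\]
Each of the three summands has the shape $d_{\Pcal(\Sigma)}\!\left(P^i_{ij},P^i_{ik}\right)$, where $\{i,j,k\}=\{1,2,3\}$ and $\left(P^i_{ij},P^j_{ij}\right)$, $\left(P^i_{ik},P^k_{ik}\right)$ are the two efficient defining pairs sharing the index $i$.

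Since $\mc{T}$ is an irreducible, unstabilized $(b;2)$-bridge trisection, the hypotheses of Proposition \ref{prop_improved_lower_bound_c2} are met verbatim at each index $i$, so $d_{\Pcal(\Sigma)}\!\left(P^i_{ij},P^i_{ik}\right)\geq b+1$ for $i=1,2,3$. Adding the three inequalities and taking the minimum over efficient defining pairs gives $\Lcal(\mc{T})\geq 3(b+1)$, which is the assertion. The only bookkeeping is to match the three distances appearing in the definition of $\Lcal$ with the three instances of Proposition \ref{prop_improved_lower_bound_c2}, and to observe that the global hypotheses (irreducible, unstabilized, $c=2$) are inherited without change.

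There is no genuine obstacle at this stage: the entire content of the theorem has already been absorbed into the proof of Proposition \ref{prop_improved_lower_bound_c2}, whose Case 1 / Case 2 analysis and the inductive Claim 1 about the $4$-holed spheres $E$, $E'$, $E''$ are where the real difficulty lies. If one also wished to record that the bound is sharp, one would invoke Example \ref{example:spin_toruslink_L}; likewise, setting $b=4$ recovers Theorem 3.15 of \cite{aranda2021bounding}. Neither remark is needed for the statement itself.
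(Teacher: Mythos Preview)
Your proposal is correct and follows exactly the paper's own argument: the theorem is an immediate corollary of Proposition \ref{prop_improved_lower_bound_c2}, applied once at each index $i$ and summed. The additional bookkeeping you spell out (matching the three summands in the definition of $\Lcal$ with the three instances of the proposition) is accurate and merely makes explicit what the paper's one-line proof leaves implicit.
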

\begin{proof} 
This follows from Proposition \ref{prop_improved_lower_bound_c2}. 
\end{proof}

\subsubsection*{Computations of $\Lcal$-invariants}

\begin{example}[Spin of (2,n)-torus links]\label{example:spin_toruslink_L}
Let $n\in \mathbb Z$ with $|n|\geq 2$. Let $L$ be a {$(2,n)$-torus} link (or knot) and let $F=S(L)$. 
In particular $L$ is a 2-bridge knot with Conway number 1/n. 
Proposition \ref{prop:spun2b} implies that every minimal $(b;c_1,c_2,c_3)$-bridge trisection for $F$ is irreducible and satisfies $b=4$ and $c_i=2$. Thus, By Theorem \ref{thm_lower_L_c2}, $\Lcal(F)\geq 3(4+1)=15$. On the other hand, Corollary 4.5 of \cite{aranda2021bounding} states that $\Lcal(F)\leq 6d(1/n,0)+9=6\cdot 1 +9=15$. Hence $\Lcal(F)=15$. 
\end{example}

\begin{example}[$T^2$-spin of (2,n)-torus links]
Let $L$ be a $(2,n)$-torus link (or knot) for some $n\in \mathbb{Z}-\{0,\pm 1\}$ and let $F=T(L)$. By lemmas \ref{lem:6bridge} and \ref{lem:bridget2}, every minimal bridge $(b;c_1,c_2,c_3)$-bridge trisection of $F$ is irreducible and satisfies $b=6$ and $c_i=2$. Thus, by Theorem \ref{thm_lower_L_c2}, $\Lcal(F)\geq 3 (6+1)=21$. We now discuss the opposite inequality.  
Figure \ref{fig:t2fig1} shows a picture of the three links $L_i=T_i \cup \T_{i+1}$ when $L$ is a Hopf link ($n=2$). 
For arbitrary $n$, the pants decompositions will be the same except for the long loops on each $P^i_{ij}$ which will resemble the Conway number of $L$ ($1/n$).  
The sequence of paths in $\Pcal(\Sigma)$ described in Figure \ref{fig:t2Lfig2} shows that $\Lcal(F)\leq 7+7+7=21$. 
Hence, $\Lcal(F)=21$.
\end{example}
\clearpage

\begin{figure}[ht!]
\includegraphics[width=16cm]{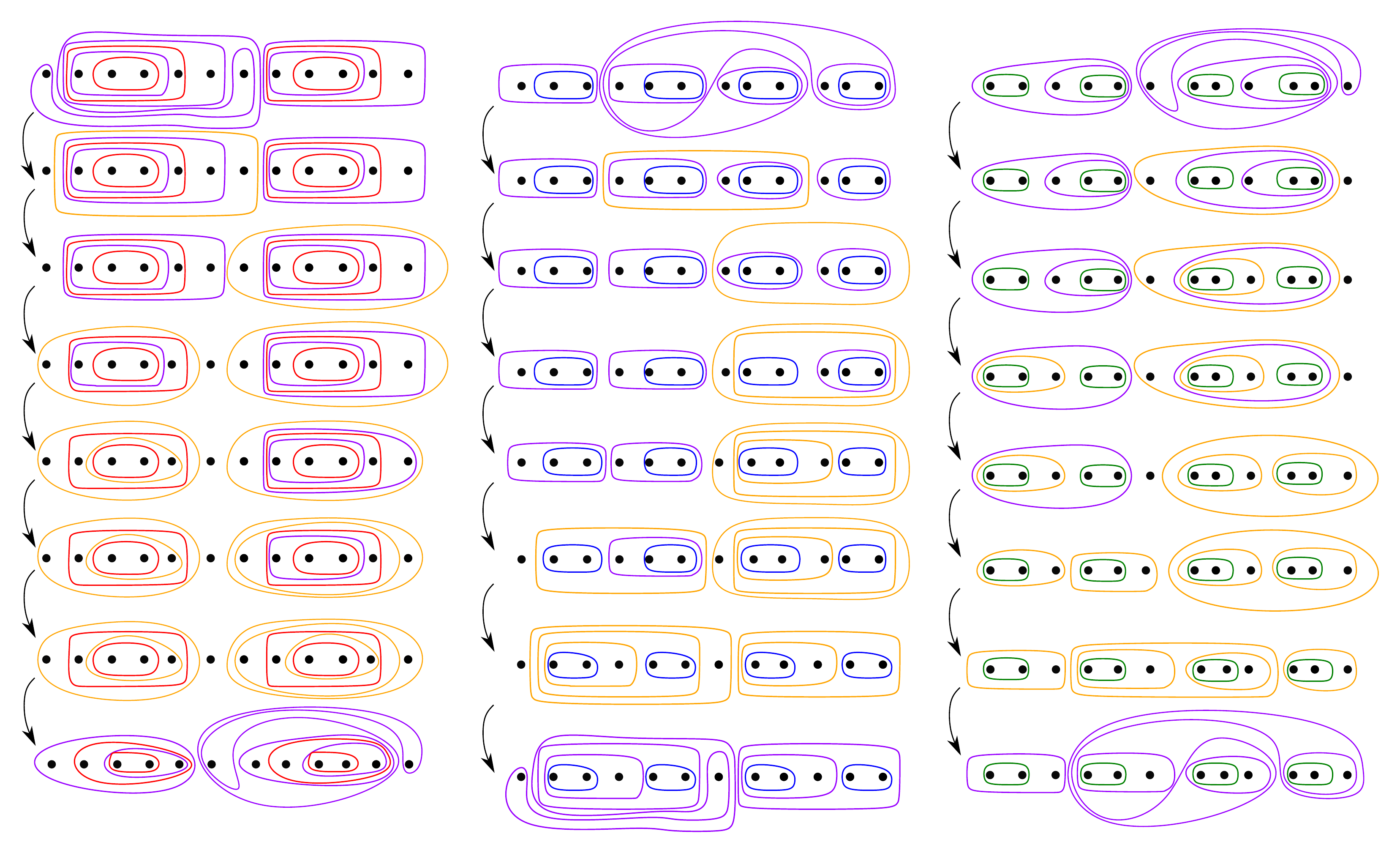}
\centering
\caption{$\Lcal$-invariant of the $T^2$-spin of a $(2,n)$-torus link. A path 
connecting $P^i_{ij}$ and $P^i_{ik}$. }
\label{fig:t2Lfig2}
\end{figure}




\section{Results in the relative setting}\label{section:relative}
We now turn to the relative setting. For more detailed definitions, readers can consult \cite{meier2020filling,blair2020kirby}. 
Let $Z$ be a 3-ball parametrized as $D^2\times I$. We call $\partial_{+}Z = D^2\times \lbrace 1 \rbrace$ the \textbf{positive boundary} and $\partial_{-}Z = D^2\times \lbrace -1 \rbrace$ the \textbf{negative boundary}. 
A \textbf{trivial relative tangle} $(Z,T)$ is a 3-ball $Z$ containing a properly embedded 1-manifold $T$ such that $\partial T$ is contained in the interior of $\partial_{-}Z \cup \partial_{+}Z$ such that there is a properly embedded arcs $\alpha\subset \partial_+Z$ such that $T$ can be isotoped, relative to $\partial T$, into $\alpha\times I$. 
A relative tangle is \textbf{strictly trivial} is $T$ is trivial and has no closed components and no components with both endpoints on $\partial_-Z$. A relative tangle is \textbf{spanning} if each arc component of $T$ has one endpoint on each $\partial_-Z$ and $\partial_+Z$. 
We will write \textbf{r.s.t. tangle} to refer to a relative spanning trivial tangle.  
Recall that a \textbf{trivial disk system} is a pair $(X,\mathcal{D}),$ where $X$ is the 4-ball, and $\mathcal{D}$ is a collection of properly embedded disks simultaneously isotopic, relative to the boundary, into $\partial X$. 


\begin{definition}
A \textbf{bridge trisection} of a properly embedded surface in the 4-ball $B^4$ is a decomposition $(B^4,F)=(X_1,\mathcal{D}_1) \cup (X_2,\mathcal{D}_2)\cup (X_3,\mathcal{D}_3)$, such that for 
$i\in \mathbb Z / 3\mathbb Z$,
\begin{enumerate}
\item $(X_i,\mathcal{D}_i)$ is a trivial disk system inside a 4-ball, 
\item $(Z,T_i)=(X_{i},\mc{D}_{i})\cap (X_{i+1},\mc{D}_{i+1})$ is a trivial relative tangle, and 
\item the triple intersection $(X_1,\mc{D}_1) \cap (X_2,\mc{D}_2) \cap (X_3,\mc{D}_3)$ is a $2b$ punctured 2-disk $\Sigma$. 
\end{enumerate}
\end{definition}
%
%

\textbf{Notation.} For a given bridge trisection of a properly embedded surface $F$ in $B^4$, we keep track of certain numerical invariants. Henceforth, we reserve these letters to mean the following:
\begin{itemize}
\item the \textbf{bridge number} of $\mc{T}$ is the quantity $b(\mc{T})=|\Sigma\cap F|/2$,
\item $b(F)$ denotes the minimal $b(\mc{T})$ among all bridge trisections of $F$, 
\item $b'$ denotes the number of bridge arcs in each strictly trivial relative tangle $(Z_{i},T_{i})$, 
\item $c_i$ denotes the number of closed components in each trivial spanning relative tangle $(Z_i,T_i)$, 
\item $v$ denotes the number of vertical arcs in each trivial spanning relative tangle $(Z_i,T_i)$.
\end{itemize}

We will refer to such $\mc T$ as a a $(b',c_1,c_2,c_3;v)$-bridge trisection. 
We say a properly embedded disk $D$ in a relative trivial tangle $(Z,T)$ is a \textbf{$c$-disk} if $\partial D \subset \partial_+Z$ is essential, int$(D)$ is disjoint from $\partial_+Z$, and $D$ intersects $T$ transversely in at most one point. 
An annulus $A$ properly embedded in $Z$ such that int$(A)$ disjoint from $T$ and with one boundary component a curve on each of $D^2 \times \{\pm 1\}$ is called a \textbf{spanning annulus}. A spanning annulus is \textbf{even} if it bounds a solid cylinder containing an even number of arcs. 
A collection of pairwise disjoint essential simple closed curves on a disk with punctures $\partial_-(Z,T) = \Sigma'$ is a \textbf{weak pants decomposition} if it is either empty and $\Sigma'$ has two or fewer punctures, or if it cuts $\Sigma'$ into pairs of pants and annuli, such that at most one of the pairs of pants is a once-punctured annulus and that annulus, if it exists, has $\partial \Sigma'$ as one of its boundary components. 
We say that a vertex $v \in C^*(\Sigma)$ or $\Pcal(\Sigma)$ belongs to the \textbf{disk set} $\mathcal{P}_c(T)$ if every curve of $v$ bounds a $c$-disk or an even spanning annulus in $(Z,T)$ and the boundaries of even spanning annuli in $\partial_-(Z,T)$ form a weak pants decomposition.
For a r.s.t. tangle $L$ in $B^3 = D^2\times I$ with bridge surface $\Sigma$ dividing $(B^3,L)$ into strictly trivial relative tangles $(Z_1,T_1)\cup_{\Sigma} (Z_2,T_2)$. 
A pair of vertices $u \in \mathcal{P}_c(T_1)$ and $w \in \mathcal{P}_c(T_2)$ is said to be an \textbf{efficient defining pair} if the distance $d(u,w)$ in $\C^*(\Sigma)$ is minimal.

\begin{definition}
Let $F$ be a properly embedded surface in $B^4$. If a bridge surface $\Sigma$ for $F$ is not admissible, then we define $\mathcal{L}^*(F) = 0$. 
For each $i<j$, let $\left(P^i_{ij}, P^j_{ij}\right)$ be an efficient pair for the r.s.t. tangle $T_i\cup \T_j$. 
We define $\mathcal{L}^*(\mathcal{T})$ of a bridge trisection $\mathcal{T}$ to be 
\[
d(P^1_{12},P^1_{31})+d(P^2_{23},P^2_{23})+d(P^3_{31},P^3_{23}),
\]
minimized over all choices of efficient defining pairs. The knotted surface invariant \textbf{$\mathcal{L}^*(F)$} is defined to be the minimum $\mathcal{L}^*(\mathcal{T})$ over all bridge trisections $\mathcal{T}$ for $F$ such that $b(\mathcal{T})=b(F)$.
\end{definition}

We restate Lemmas 5.8, 5.9 and 5.10 of \cite{blair2020kirby}, which we will often refer to, here for convenience. 
The proofs of these lemmas did not make use of the fact that the move involved is an edge in the pants complex, and so the lemmas also hold in the dual curve complex setting.
\begin{lemma}[Lemma 5.8 from \cite{blair2020kirby}]\label{lem:numberofreducingcompressions}
Suppose that $(Z,L)$ is a  
r.s.t. tangle equipped with a bridge splitting. Let $S$ be a collection of pairwise disjoint unpunctured reducing spheres or even reducing annuli for $\Sigma$ such that no two curves of $S \cap \Sigma$ bound an unpunctured annulus in $\Sigma-S$. Let $c$ be the number of closed components of $L$ and $v$ the number of arcs. Then, $2c + v - 3 + |\partial \Sigma| \geq |S|$.
\end{lemma}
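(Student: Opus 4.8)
The statement to prove is Lemma 5.8 of \cite{blair2020kirby}, restated here: if $(Z,L)$ is a r.s.t.\ tangle with a bridge splitting and $S$ is a collection of pairwise disjoint unpunctured reducing spheres or even reducing annuli for $\Sigma$ such that no two curves of $S\cap\Sigma$ cobound an unpunctured annulus in $\Sigma-S$, then $2c+v-3+|\partial\Sigma|\geq|S|$, where $c$ and $v$ count the closed components and arcs of $L$ respectively.

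\medskip

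\textit{Proof proposal.} The plan is to cut the 3-manifold $B^3$ (equivalently $Z$) open along the spheres and annuli in $S$ and run an Euler-characteristic bookkeeping argument on the bridge surface. First I would observe that since the spheres in $S$ are unpunctured and the annuli are even reducing annuli, cutting $(B^3,L)$ along $S$ produces a disjoint collection of strictly trivial relative tangles $(Z^{(1)},L^{(1)}),\dots,(Z^{(k)},L^{(k)})$, each inheriting a bridge splitting from $\Sigma$; the hypothesis that no two curves of $S\cap\Sigma$ cobound an unpunctured annulus in $\Sigma-S$ guarantees no piece is a trivial ``product'' piece that could be discarded, so every piece genuinely carries bridge data. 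I would track two quantities across this cutting operation: the number of pieces, which starts at $1$ and increases by exactly $1$ for each reducing sphere and by exactly $1$ for each reducing annulus (a sphere separates; an annulus in this relative-trivial setting also separates into two relative tangles, one of which may be a product region—here is where I must be careful), and the ``complexity'' $2c+v+|\partial\Sigma|$, which is additive (up to a correction from the new boundary created) over the pieces.

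\medskip

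The key steps, in order, would be: (1) set up the invariant $\kappa(Z,L) := 2c + v + |\partial_-\Sigma|$ attached to a bridge splitting of an r.s.t.\ tangle, and show $\kappa \geq 3$ for any such tangle with admissible bridge surface—this is the base estimate, coming from the fact that a strictly trivial relative tangle with a genuine bridge splitting must have at least three ``marked-point-like'' features (two punctures on $\partial_-$ plus one arc, or analogous minimal configurations); (2) show that cutting along one reducing sphere replaces $(Z,L)$ by two pieces with $\kappa$-values summing to $\kappa(Z,L) + 2\cdot 1$, since each side of the sphere acquires no new negative-boundary punctures but the closed-component count is partitioned and a $+3$-type minimum kicks in on each side—more precisely the arithmetic should give that $k$ cuts yield total $\kappa \geq \kappa(Z,L) - 3 + 3k$ after rearranging, hence $k \leq \kappa(Z,L) - 3 = 2c+v+|\partial\Sigma|-3$; wait—I need $|S|$ on the left, and $|S| = k-1$ if every cut increases the piece count by one, but annuli might not always separate, so (3) handle the annulus case carefully: an even reducing annulus for a relative tangle, when we cut, either separates into two relative tangles or cuts off a solid-torus/collar piece, and in the latter case the hypothesis forbidding cobounding unpunctured annuli in $\Sigma-S$ is exactly what rules out the degenerate behavior, forcing the annulus to behave like a separating surface for counting purposes; (4) assemble: $|S| \le (\text{number of pieces}) - 1 \le (2c+v+|\partial\Sigma|) - 3$.

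\medskip

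The main obstacle I anticipate is step (3), the annulus bookkeeping. Reducing spheres in a 4-ball/3-ball setting separate cleanly, but even spanning/reducing annuli interact with $\partial_-\Sigma = \Sigma'$ in a more delicate way: cutting along an annulus whose core bounds an even number of arcs can either genuinely split the tangle or merely collar off a product piece, and distinguishing these requires using the ``weak pants decomposition'' condition on the boundaries of even spanning annuli together with the non-cobounding hypothesis. I would resolve this by first reducing to the case where $S$ consists only of spheres (handling annuli by a separate innermost/outermost argument, capping off or absorbing annuli that collar the boundary), and then the sphere case is the clean additivity argument above. An alternative, possibly cleaner, route is to dualize: build a graph whose vertices are the components of $B^3$ cut along $S$ and whose edges are the elements of $S$, observe this graph is a forest (since $S$ is a collection of separating surfaces, given the hypotheses), so $|S| = (\#\text{vertices}) - (\#\text{components of the forest}) \le (\#\text{vertices}) - 1$, and then bound $\#\text{vertices}$ by $2c+v+|\partial\Sigma|-2$ via the per-piece estimate $\kappa(\text{piece}) \ge 3$ combined with additivity of $\sum(\kappa-2)$ over the forest. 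This forest formulation is the version I would actually write up, since it isolates the one real inequality ($\kappa \ge 3$ per piece) and makes the counting transparent.
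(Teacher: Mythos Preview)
The paper does not give its own proof of this lemma; it is restated from \cite{blair2020kirby} (their Lemma~5.8) and used as a black box, so there is no in-paper argument to compare against.

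Your forest formulation is the right shape and the arithmetic closes once set up cleanly. Each curve of $\mathcal{C}=S\cap\Sigma$ separates the planar surface $\Sigma$, so cutting along $\mathcal{C}$ yields $|S|+1$ pieces $\Sigma^{(j)}$; one has $\sum_j|\partial\Sigma^{(j)}|=|\partial\Sigma|+2|S|$, and because every component of $L$ is disjoint from $S$ and hence lands in a single piece, $\sum_j(2c_j+v_j)=2c+v$. If $\kappa_j:=2c_j+v_j+|\partial\Sigma^{(j)}|\geq 3$ for every $j$, summing gives exactly $2c+v+|\partial\Sigma|-3\geq|S|$. Your worry that reducing annuli might fail to separate is misplaced: $Z$ is a ball, so $H_2(Z,\partial Z)=0$ and every properly embedded surface separates.

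The genuine gap is in justifying $\kappa_j\geq 3$ for every piece. Innermost-disk pieces are fine (a bounding sphere forces $v_j=0$ and $c_j\geq 1$; a bounding even annulus forces $v_j$ even and, by essentiality of the curve in $\Sigma$, $2c_j+v_j\geq 2$), and pieces with three or more boundary circles are automatic. The problem is an unpunctured \emph{annular} piece. The hypothesis excludes this when both boundary circles lie in $\mathcal{C}$, but says nothing when one of them is a component of $\partial\Sigma$; that piece has $\kappa_j=2$ and costs you exactly one in the final bound. You invoke the weak-pants-decomposition condition to rescue this, but that condition belongs to the definition of the disk set $\mathcal{P}_c(T)$, not to the hypotheses of the present lemma. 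You need either to argue directly that such a boundary-parallel curve in $S\cap\Sigma$ is excluded by whatever essentiality convention for reducing spheres and annuli is in force in \cite{blair2020kirby}, or to isolate it as a single exceptional piece and verify the inequality still holds.
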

\begin{lemma}[Lemma 5.9 and 5.10 from \cite{blair2020kirby}]
Suppose that $(Z,L)$ is either a spanning trivial relative tangle equipped with a bridge splitting with $b\geq 2$. Let $(u,w)$ be an efficient pair. If $d(u,w)>0$, then there exists a cut-reducing curve. Furthermore, any curve $\gamma$ in $v$ bounding a cut disk on one side is a curve in $w$ bounding a cut disk on the other side and $\gamma$ is a common curve in any geodesics from $u$ to $w.$ \label{lem:existencecommoncurve}
\end{lemma}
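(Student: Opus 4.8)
The plan is to rerun, in the dual curve complex, the argument of Lemmas~5.9 and~5.10 of \cite{blair2020kirby}, checking at each step that replacing an A-move by an arbitrary edge of $C^*(\Sigma)$ changes nothing. The statement has two halves: an existence assertion (a cut-reducing curve exists once $d(u,w)>0$) and a structural assertion (every curve of $u$ that bounds a cut-disk on one side already lies in $w$, bounds a cut-disk on the other side, and is missed by every geodesic from $u$ to $w$). I would prove the structural half first, by induction on the bridge number $b$, and then deduce existence from it. The combinatorial inputs that must survive the passage to $C^*(\Sigma)$ are exactly the parity bookkeeping for punctured planar surfaces and the uniqueness of the compressing disk of a $2$-string tangle, and these are already available in dual-complex form as Lemma~\ref{lem_0}, Lemma~\ref{lem_6}, Proposition~\ref{lem_7}, and Lemma~\ref{lem:gamma1_moves_first}, alongside the reducing-collection count of Lemma~\ref{lem:numberofreducingcompressions}.

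For the structural half, fix a curve $\gamma\in u$ bounding a cut-disk $D$ in $T_1$. First I would record the parity constraint: capping $\gamma$ by the component $F$ of $\Sigma\setminus\gamma$ on one side yields a sphere whose intersection with $L$ is even, so, since it meets $D$ once, $\gamma$ encloses an odd number of punctures; hence $\gamma$ can bound neither a compressing disk nor an even spanning annulus on either side, and once we know $\gamma\in w$ it necessarily bounds a cut-disk in $T_2$ and is cut-reducing. So the real content is that $\gamma$ is fixed by every geodesic. Suppose a geodesic $\ell$ moves $\gamma$; take the first edge $\gamma\mapsto\gamma'$ of $\ell$ that does so, and perform an innermost-disk exchange near this edge, using $D$, exactly as in \cite{blair2020kirby}, to produce a path of the same length in which the first move of $\gamma$ occurs strictly later. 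Iterating, $\ell$ may be rechosen to fix $\gamma$, so in particular $\gamma\in w$; then cutting $(Z,L)$ along the $c$-disks bounded by $\gamma$ on the two sides produces bridge splittings of strictly smaller bridge number (the standard reduction of \cite{zupan2013bridge,blair2020kirby}) to which $u$, $w$, and the rechosen geodesic restrict as efficient pairs and geodesics, so the inductive hypothesis closes the loop. The base case $b=2$ is direct: $\Sigma$ has four punctures, a (weak) pants decomposition is essentially a single curve, and uniqueness of the compressing disk of a $2$-string tangle (Lemma~\ref{lem:gamma1_moves_first}) forces the cut-curve of $u$ and that of $w$ to coincide.

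For existence, suppose $d(u,w)>0$ and, for contradiction, that no cut-reducing curve exists. By the structural half, no curve of $u$ bounds a cut-disk in $T_1$, so every curve of $u$ bounds a compressing disk or an even spanning annulus there, and similarly for $w$ in $T_2$; a common curve of $u$ and $w$ would then be reducing, and cutting along it would split off a piece of smaller bridge number on which the statement holds, so we may assume $u$ and $w$ share no curve. Then any geodesic must move each of the $2b-3$ or more curves of $u$, giving $d(u,w)\ge 2b-3$; on the other hand the triviality of $T_1\cup\overline{T_2}$ makes its bridge splitting perturbed (the relative analogue of \cite{otal1982perturbed} used in \cite{blair2020kirby,meier2020filling}), which yields an efficient pair realizing a distance bounded by $2c+v-3+|\partial\Sigma|$ via Lemma~\ref{lem:numberofreducingcompressions}. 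For $b$ beyond a small bound in $c$ and $v$ these contradict each other, and the remaining small values of $b$ coincide with the base cases already treated.

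The step I expect to be the genuine obstacle is the innermost-disk exchange that pushes the first move of $\gamma$ later along a geodesic. Unlike an A-move, an edge of $C^*(\Sigma)$ only constrains the new curve to be disjoint from the curves it leaves fixed and may change $|\gamma\cap\gamma'|$ wildly, so one must verify that the exchange still produces a legitimate path of the same length and that it genuinely advances the first move of $\gamma$; one must also carry the weak-pants-decomposition condition and the even spanning annuli through the surgery and restriction, neither of which has an analogue in the closed case treated by Lemma~\ref{lem_1}.
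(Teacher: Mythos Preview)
The paper does not give an independent proof of this lemma: it is stated as Lemmas~5.9 and~5.10 of \cite{blair2020kirby}, and the only justification provided in the present paper is the single sentence preceding the statement, namely that ``the proofs of these lemmas did not make use of the fact that the move involved is an edge in the pants complex, and so the lemmas also hold in the dual curve complex setting.'' Your proposal is therefore the same approach as the paper's---rerun the argument of \cite{blair2020kirby} and observe that nothing in it requires the intersection-number-two condition of an A-move---only you have written out in considerably more detail what that rerun would look like. The additional care you take (flagging the innermost-disk exchange and the bookkeeping for even spanning annuli and weak pants decompositions as the places where the passage from $\mathcal P(\Sigma)$ to $C^*(\Sigma)$ needs checking) is appropriate, but goes beyond what the paper itself supplies.
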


\begin{lemma}[Strengthened form of Lemma 5.7 of \cite{blair2020kirby}]\label{lem:efficientpairrelative}
Let $L$ be a spanning relative tangle with bridge number at least $b\geq 3/2$ equipped with a bridge splitting $(Z_1,T_1)\cup(Z_2,T_2)$. Take $u\in \mathcal{P}_c(T_1)$ and $w\in \mathcal{P}_c(T_2)$ such that $(u,w)$ is an efficient pair. Then, $d(u,w)=b-c+\frac{v}{2}$. Furthermore, each curve in $u$ moves at most once as we traverse a geodesic. 
\end{lemma}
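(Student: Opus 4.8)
The plan is to reduce the statement to a result already in the literature by showing that, for a spanning relative tangle, the efficient-pair distance in the dual curve complex $C^*(\Sigma)$ is the same as in the pants complex $\mathcal{P}(\Sigma)$, and then quote the pants-complex computation. First I would recall (Lemma 5.7 of \cite{blair2020kirby}) that in the pants complex one has $d_{\mathcal{P}(\Sigma)}(u,w) = b - c + \tfrac{v}{2}$ for an efficient pair, with each curve in $u$ moving at most once. Since $\mathcal{P}(\Sigma)$ is a subgraph of $C^*(\Sigma)$, this immediately gives the upper bound $d(u,w) \leq b - c + \tfrac{v}{2}$. The substance of the proof is the matching lower bound in $C^*(\Sigma)$.

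For the lower bound I would argue by induction on $b$, mimicking the structure of the proof of Lemma \ref{lem_1} and incorporating Lemma \ref{lem:existencecommoncurve}. The base case is $b = 3/2$ (one vertical arc, no bridge arcs), where the disk sets are small enough to check directly. For the inductive step, suppose $(u,w)$ is an efficient pair with $d(u,w) > 0$; by Lemma \ref{lem:existencecommoncurve} there is a cut-reducing curve $\gamma$, i.e.\ a curve bounding a cut disk on both sides (or, handling the case $d(u,w)=0$ separately, a reducing sphere/even annulus), and $\gamma$ lies on every geodesic from $u$ to $w$. Surger $(Z,L)$ along the cut sphere (resp.\ reducing sphere or even spanning annulus) determined by $\gamma$ to obtain a decomposition into two spanning relative tangles $(Z_1,L_1)$ and $(Z_2,L_2)$ with strictly smaller bridge numbers $b_1, b_2$ and component/vertical-arc counts $c_1,c_2,v_1,v_2$ satisfying $b_1 + b_2 = b$, $c_1 + c_2 = c + (\text{correction})$, $v_1 + v_2 = v$ (the exact bookkeeping depends on whether the surgery is along a cut disk, a compressing disk, or an even annulus, and on how $|\partial\Sigma|$ distributes). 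The restrictions $u_i, w_i$ of $u,w$ to $\Sigma_i$ are efficient pairs for the pieces, a $C^*(\Sigma)$-geodesic from $u$ to $w$ restricts to $C^*(\Sigma_i)$-geodesics on the pieces (because $\gamma$ is never moved), and the inductive hypothesis bounds the restricted lengths below by $b_i - c_i + \tfrac{v_i}{2}$; summing and using the additivity of the numerical data recovers $d(u,w) \geq b - c + \tfrac{v}{2}$. Finally, the "moves at most once" conclusion is inherited: any $C^*(\Sigma)$-geodesic realizing the minimal length must, on each surgered piece, be a geodesic of the predicted length, so by induction each curve outside the common curve $\gamma$ moves at most once, and $\gamma$ itself is fixed.

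The main obstacle I expect is the careful case analysis in the surgery step: unlike the closed $S^3$ setting of Lemma \ref{lem_1}, here $\gamma$ may bound a cut disk, a compressing disk, or an even spanning annulus, and in the relative setting surgering along an even spanning annulus changes $\partial\Sigma$ and $v$ in a way that must be tracked so that the quantity $b - c + \tfrac{v}{2}$ adds up correctly across the two pieces; one also has to confirm that each piece still satisfies the hypotheses ($b_i \geq 3/2$, spanning, and the weak-pants-decomposition condition on even spanning annuli survives the restriction) so that the inductive hypothesis applies. The auxiliary counting input for this bookkeeping is Lemma \ref{lem:numberofreducingcompressions}, which controls how many disjoint reducing spheres/even annuli can coexist and hence guarantees the surgery genuinely decreases complexity. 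Once the numerical accounting is pinned down, the rest is a direct transcription of the pants-complex argument into the dual curve complex, using that the relevant lemmas of \cite{blair2020kirby} never used that the moves were A-moves.
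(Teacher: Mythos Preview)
Your proposal is correct and follows essentially the same route as the paper: cite Lemma~5.7 of \cite{blair2020kirby} for the upper bound and the base case, then induct on $b$ by finding a common curve, surgering along the resulting sphere or even annulus to split into two smaller spanning relative trivial tangles, and summing the inductive estimates on the two pieces. The paper's write-up is slightly terser---it simply asserts the existence of a common curve (of any of the three types) rather than routing through Lemma~\ref{lem:existencecommoncurve}, and it does not invoke Lemma~\ref{lem:numberofreducingcompressions} at all, so you can drop that from your bookkeeping worries; the additivity $b_1+b_2=b$, $c_1+c_2=c$, $v_1+v_2=v$ (with the obvious correction depending on the surgery type, as illustrated in the paper's Figure~\ref{fig:surgery}) is all that is used.
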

\begin{proof}
In Lemma 5.7 of \cite{blair2020kirby}, the authors constructed an efficient pair with distance $d^P(u,w)=b-c+\frac{v}{2}$ so that each curve in $u$ moves at most once as we traverse a geodesic. Furthermore, they showed that the statements hold for all bridge trisections with $b \leq 3/2$. 

We induct on half integers $b\geq 2$. Suppose that the result holds for bridge splitting disk $\Sigma$ with bridge number $b'<b.$ Since we are in the spanning relative trivial tangle setting, there is a common curve $\gamma$. Such a common curve bounds cut disks or compressing disks or even spanning annuli on both sides. In other words, there is a reducing sphere, a connected sum sphere, or a reducing even annulus $S$ for our bridge splitting disk. We can surger $(B^3,L)$ along $S$ to get two bridge split spanning relative trivial tangles $(B^3,L_1) \cup (B^3,L_2)$ with bridge surfaces $\Sigma_1$ and $\Sigma_2$ (see Figure \ref{fig:surgery}). Again, if we let $u_i$ and $w_i$ denote the restriction of $v$ and $w$ to $\Sigma_i$, then we get that a geodesic $l$ connecting $u$ to $w$ restricts to a geodesic $l_i$ connecting $u_i$ to $w_i.$ Since the bridge number for $\Sigma_i$ is strictly less than $b$, 
we get that the length of $l_i$ is at least $b_i-c_i+\frac{v_i}{2}$ using the induction hypothesis. Therefore, $d(u,w)\geq b-c+\frac{v}{2}$.
\begin{figure}[ht!]
\centering
\includegraphics[width=1\textwidth]{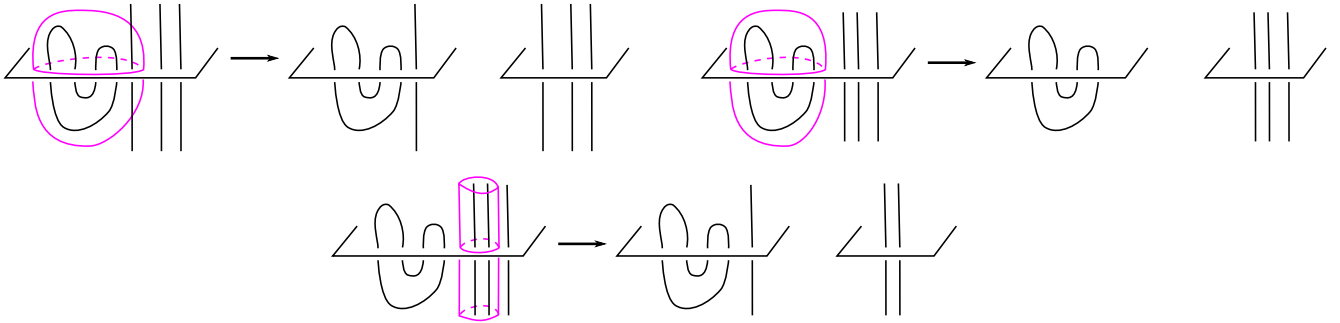}
\caption{Cutting the bridge splitting disk along a reducing sphere, a decomposing sphere, and a spanning even annulus.}
\label{fig:surgery}
\end{figure}
\end{proof}
By realizing that all cut-reducing curves must move, we get a slightly better bound than what was found in \cite{blair2020kirby}.
\begin{thm}\label{thm:lower_relcase}
Let $\mc T$ be an irreducible $(b',c_1,c_2,c_3;v)$-bridge trisection for a properly embedded surface $F$ in $B^4$. 
\begin{enumerate}[(i)]
\item If $2c_i +v \geq 2$ $\forall i$, then $\Lcal^*(\mc T) \geq 2b(\mc T) -\chi(F)$. 
\item If $2c_i +v < 2$ $\forall i$, then $\Lcal^*(\mc T) \geq 4b(\mc T) + \chi(F) -6$.
\end{enumerate}
\end{thm}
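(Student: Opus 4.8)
The plan is to mimic the closed-case arguments (Theorems \ref{thm_lower_L*} and \ref{thm_lower_L}) in the relative setting, replacing Lemma \ref{lem_1} by Lemma \ref{lem:efficientpairrelative} and using Lemma \ref{lem:existencecommoncurve} to control which curves in an efficient pair must move. The key quantity is the distance $d(P^i_{ij},P^i_{ik})$ in $C^*(\Sigma)$ between two efficient defining pairs sharing the common tangle $T_i$; once we bound this below by the appropriate linear expression in $b'$, $c_i$, $v$, summing over $i\in\mathbb{Z}/3\mathbb{Z}$ gives the stated lower bound for $\Lcal^*(\mc T)$. So first I would translate the right-hand sides: by the relative bridge-number bookkeeping one has $b(\mc T)=b'+c_i+v$ (closed components and vertical arcs each contribute, bridge arcs contribute $b'$), and $\chi(F)$ decomposes across the three tangles in terms of the $c_i$ and $v$; the two cases $2c_i+v\geq 2$ versus $2c_i+v<2$ are exactly the dichotomy between "there is a cut-reducing curve forced to move'' and "the disk sets are essentially trivial,'' paralleling the hypotheses of Lemmas \ref{lem:numberofreducingcompressions} and \ref{lem:existencecommoncurve}.

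For part (i), the argument runs as follows. By Lemma \ref{lem:efficientpairrelative}, an efficient pair $(P^i_{ij},P^j_{ij})$ for $T_i\cup\T_j$ has $d=b'+\tfrac{v}{2}$ (here $c$ for the relative unlink $T_i\cup\T_j$ is $c_i$, and $b$ is $b'+c_i+\tfrac{v}{2}$, so $b-c+\tfrac v2$ simplifies as needed), and each curve in $P^i_{ij}$ moves at most once along any geodesic. The relative analogue of Proposition \ref{prop_1} — proved by the same reduction, using Lemma \ref{lem:existencecommoncurve} to locate a cut-reducing curve, then feeding it into the shadow-tracking machinery of Propositions \ref{lem_5} and \ref{lem_3} together with the stabilization criterion — shows that no c-reducing curve of $T_i\cup\T_j$ lies in $P^i_{ik}$. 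Hence a path from $P^i_{ij}$ to $P^i_{ik}$ must move all curves that differ, and counting the common curves via Lemma \ref{lem:efficientpairrelative} (the two pairs share $|\psi|$-type curves but each has its own "moving'' block of size $b'+\tfrac v2$-minus-overlap) yields $d(P^i_{ij},P^i_{ik})\geq b(\mc T)-\chi(F)/3$ — summing the three gives (i). For part (ii), the hypothesis $2c_i+v<2$ forces $v=0$ and $c_i\leq \tfrac12$, i.e. $c_i=0$: each $T_i\cup\T_j$ is a relative tangle with no closed components and only bridge arcs, so the disk set is as constrained as possible; here the distances are governed by the $|\partial\Sigma|$ term in Lemma \ref{lem:numberofreducingcompressions}, and the bound becomes $4b(\mc T)+\chi(F)-6$ after the analogous counting (the coefficient $4$ and the $-6$ appear because each of the three distances contributes roughly $2b'$ rather than $b'$, and the Euler characteristic now enters with the opposite sign).

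The main obstacle I anticipate is the bookkeeping that converts the intrinsic counts $b'$, $c_i$, $v$, $|\partial\Sigma|$ into the clean expressions $2b(\mc T)-\chi(F)$ and $4b(\mc T)+\chi(F)-6$: one must carefully relate $\chi(F)$ to the tangle data (each band/saddle in the banded presentation, each closed component, and each vertical arc contributes a definite amount), verify that $b(\mc T)=b'+c_i+v$ is independent of $i$ as the notation presupposes, and check that the "share the common curve / all differing curves must move'' count produces exactly these coefficients with no off-by-one error. A secondary technical point is confirming that the shadow-based propositions of Section 4, stated for trivial tangles in a ball, apply verbatim to relative trivial tangles once even spanning annuli are folded into the notion of c-disk — the remark preceding Lemma \ref{lem:numberofreducingcompressions} asserts the relevant lemmas carry over, and I would spell out that the graph $\Gcal(P)$ construction and Lemma \ref{lem_4} still make sense with $\partial\Sigma$ treated as an extra boundary that no shadow crosses.
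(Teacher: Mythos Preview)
Your approach has a genuine gap: the theorem assumes only that $\mc T$ is irreducible, not that it is unstabilized, so you cannot invoke the stabilization criterion (Lemma~\ref{lem_2}) or the shadow-tracking machinery behind Lemma~\ref{lem:then_nonmin} and Proposition~\ref{prop_1}. Porting those results to the relative setting would at best yield ``$\mc T$ is reducible or stabilized,'' which is no contradiction here. The paper's proof avoids this entirely and is much more direct. It does not try to show that \emph{every} c-reducing curve for $T_i\cup\T_{i+1}$ must leave $P^i_{i-1,i}$; instead it focuses only on the cut-curves lying in the \emph{common} part $P^i_{i,i+1}\cap P^{i+1}_{i,i+1}$ of the efficient pair. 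If such a common cut-curve $\gamma$ is fixed along the geodesic $\lambda$ from $P^i_{i,i+1}$ to $P^i_{i-1,i}$, then $\gamma\in P^i_{i-1,i}$, and the \emph{second} clause of Lemma~\ref{lem:existencecommoncurve} (a curve bounding a cut-disk on one side of an efficient pair bounds one on the other side as well) forces $\gamma$ to bound cut-disks in all three tangles, contradicting irreducibility. No shadows, no stabilization.

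The dichotomy (i)/(ii) then comes from a pure count that you do not identify. The number of common curves in an efficient pair is $(2b-2)-d(P^i_{i,i+1},P^{i+1}_{i,i+1})=b+c_i-2+\tfrac{v}{2}$, and Lemma~\ref{lem:numberofreducingcompressions} bounds the number of \emph{even} reducing curves among these by $2c_i+v-2$. If $2c_i+v\geq 2$ this bound is nonnegative, and subtracting gives at least $b-c_i-\tfrac{v}{2}$ common cut-curves, each of which must move; summing and using $\chi(F)=c_1+c_2+c_3+v-b'$ and $2b=2b'+v$ gives (i). If $2c_i+v<2$ the bound from Lemma~\ref{lem:numberofreducingcompressions} is negative, so there are \emph{no} even reducing curves and all $b+c_i-2+\tfrac{v}{2}$ common curves are cut-reducing and must move, yielding the larger bound (ii). Your reading of case~(ii) as ``$v=0$ and $c_i=0$, disk sets trivial'' is off (one can have $v=1$), and in any case the mechanism is not a different argument but the same count with a vacuous subtraction. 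Finally, note the bookkeeping: $b(\mc T)=b'+\tfrac{v}{2}$ (half the punctures of $\Sigma$), not $b'+c_i+v$.
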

\begin{proof}
Let $(P^i_{i,i+1},P^{i+1}_{i,i+1})$ be efficient defining pairs for each tangle $L_i$, $i=1,2,3$. 
Let $\lambda$ be a geodesic in $C^*(\Sigma)$ connecting $P^i_{i,i+1}$ and $P^i_{i-1,i}$. 
Suppose there is a cut-curve $\gamma$ in $P^i_{i,i+1} \cap P^{i+1}_{i,i+1}$ that is fixed along $\lambda$. 
In particular, $\gamma\in P^i_{i-1,i}$ bounds a cut-disk in $(Z_{i-1},T_{i-1})$. By Lemma \ref{lem:existencecommoncurve}, the curve $\gamma$ bounds a cut disk in all three tangles in the spine. In other words, the bridge trisection decomposes as a connected sum, contradicting the fact that $\mc T$ is irreducible. 
Hence, $\lambda$ must move each cut-curve in $P^i_{i,i+1} \cap P^{i+1}_{i,i+1}$. \\
(i) There are $2b-2$ curves in each pants decomposition of $\Sigma$. By Lemma \ref{lem:efficientpairrelative}, the distance between $P^i_{i,i+1}$ and $P^{i+1}_{i,i+1}$ pair is $b - c_i + \frac{v}{2}$. This means that $|P^i_{i,i+1}\cap P^{i+1}_{i,i+1}|$ is at least $(2b-2)-(b-c_i-\frac{v}{2})=b+c_i-2+\frac{v}{2}$. Lemma \ref{lem:numberofreducingcompressions} implies that, among these common curves, at most $(2c_i+v -2)$ bound an even number of punctures. Thus, if $2c_i+v -2\geq 0$, then there are at least $(b+c_i-2+\frac{v}{2})-(2c_i+v -2)= b-c_i-\frac{v}{2}$ cut reducing curves in $P^i_{i,i+1}\cap P^{i+1}_{i,i+1}$. Hence, $d\left(P^i_{i,i+1}, P^i_{i-1,i}\right)\geq b-c_i-\frac{v}{2}$. 
By definition of $\Lcal^*(\mc T)$, we get that \[\mathcal{L}^*(\mc T)\geq (b-c_1-\frac{v}{2})+ (b-c_2-\frac{v}{2})+(b-c_3-\frac{v}{2}) = 3b-(c_1+c_2+c_3)-3\cdot \frac{v}{2}.\] 
Since $\chi(F) = c_1+c_2+c_3+v-b'$ and $2b=2b'+v$, we conclude that \[\mathcal{L}^*(\mc T)\geq 3\cdot \frac{2b'+v}{2}-(c_1+c_2+c_3)-3\cdot \frac{v}{2}=2b'+v-\chi(F)=2b-\chi(F).\]
(ii) On the other hand, if $2c_i+v<0$ for all $i$, then there are no even reducing curves in $P^i_{i,i+1}\cap P^{i+1}_{i,i+1}$. Thus, there are at least $b+c_i-2+\frac{v}{2}$ cut reducing curves in $P^i_{i,i+1}\cap P^{i+1}_{i,i+1}$. 
By definition of $\Lcal^*(\mc T)$ we conclude that, 
\[ \Lcal^*(\mc T) \geq 3b+c_1+c_2+c_3-6+3\cdot \frac{v}{2}=4b + \chi(F) -6.\]
\end{proof}

\begin{example}
Let $U\subset B^4$ be an unknotted 1-holed torus with unknotted boundary. We consider the bridge trisection $\mc{T}$ of $U$ built in Example 7.13 of \cite{meier2020filling}. 
The top row of Figure \ref{fig:unknottedtorus} shows the result of gluing pairwise trivial tangles of $\mc T$ together. In the bottom row, the red, blue, and green curves bound compressing disks in the red tangle, blue tangle, and green tangle, respectively. The gray curve bounds cut disks on two sides. To see that the distance in each disk set is one, the reader can pick two out of the three pictures in the second row and remove all simple closed curves that receive colors that are not common in the two pictures. The result is two pictures differing in only the gray curve. The two gray curves will differ by an edge in the dual curve complex. Therefore $\mathcal{L}(\mc T) \leq 3$. 
On the other hand, Theorem \ref{thm:lower_relcase} implies that $\Lcal^*(\mc T)\geq 4\cdot \frac{5}{2} + (-1) -6=3$. Therefore, $\Lcal^*(\mc T) =\Lcal(\mc T)=3$. 
\end{example}
\clearpage
\begin{figure}[ht!]
\includegraphics[width=17cm]{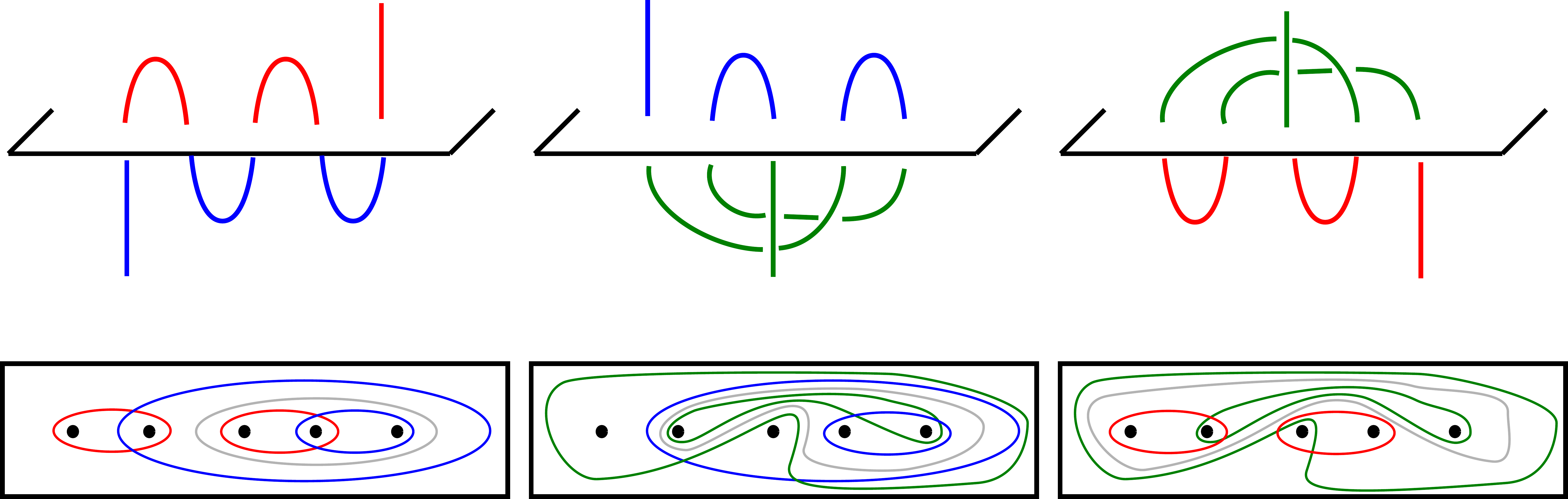}
\caption{A $b=2$ bridge trisection for the 1-holed unknotted torus $U$. The second row shows efficient defining pairs for each r.s.t. tangle $L_i$.}
\centering\label{fig:unknottedtorus}
\end{figure}


\subsection*{Estimating $\Lcal^*$-invariant of a ribbon disk}


%
%
Let $F$ be the standard ribbon disk for the square knot. The goal of this subsection is to estimate $\Lcal^*(F)$. Recall that $\mathcal{L}$ is defined for relative trisections realizing the bridge number. 

\begin{proposition}\label{prop:bridge_of_disk_minimal}
The minimum number of punctures that a bridge trisection disk for $F$ can have is seven.
\end{proposition}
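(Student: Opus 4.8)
The plan is to reduce the puncture count to the elementary invariants of a bridge trisection and then constrain those via the boundary knot. So, let $\mc{T}$ be any bridge trisection of $F$ with bridge trisection disk $\Sigma$, and let $b'$, $c_1$, $c_2$, $c_3$, $v$ be as in the notation above, so that $\Sigma$ has $2b(\mc{T})=2b'+v$ punctures and $\chi(F)=c_1+c_2+c_3+v-b'$. Since $F$ is a disk, $\chi(F)=1$, hence $b'=c_1+c_2+c_3+v-1$, and therefore
\[
\#\{\text{punctures of }\Sigma\}\;=\;2b'+v\;=\;2(c_1+c_2+c_3)+3v-2 .
\]
As each $c_i\ge 0$, the entire lower bound reduces to the inequality $v\ge 3$; and since $7=2(0)+3(3)-2$, attaining the value $7$ will force $v=3$ and $c_1=c_2=c_3=0$.

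For the lower bound I would invoke the fact, built into the relative trisection framework of \cite{meier2020filling}, that a bridge trisection of a surface in $B^4$ restricts on $S^3=\partial B^4$ to a bridge splitting of the boundary in which each side is a trivial tangle with $v$ strands. The boundary of $F$ is the square knot $3_1\#\overline{3_1}$, whose bridge number equals $2+2-1=3$ by Schubert's additivity of bridge number under connected sum; hence every bridge splitting of $\partial F$ has at least three strands, so $v\ge 3$. Plugging in gives at least $2\cdot 0+3\cdot 3-2=7$ punctures, which is the lower bound.

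For sharpness I would exhibit a bridge trisection of $F$ realizing seven punctures (equivalently, by the displayed identity, one with $v=3$, $c_1=c_2=c_3=0$, $b'=2$). The starting data is the standard ribbon description of $F$: a single band turns $\partial F$ into the two-component unlink, which is then capped off by two minima. After isotoping $\partial F$ into its $3$-bridge position so that the band is carried by a bridge disk, I would apply the relative version of the Meier--Zupan passage from a banded bridge splitting to a bridge trisection (as used in \cite{meier2020filling,blair2020kirby}) and write down the three resulting tangles, verifying directly from the picture that there are exactly three vertical arcs, no closed components in any sector, and two bridge arcs per sector. Combined with the lower bound, this shows the minimum is exactly $7$.

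The step I expect to be the real obstacle is the sharpness construction: one must produce a concrete trisection that simultaneously keeps $v=3$ and makes all three $c_i$ vanish, since a naive construction from the ribbon movie typically leaves a stray closed component in one sector, which would raise the puncture count to $9$. This will likely require a careful choice of band together with some destabilizations, and an explicit triplane diagram. A secondary, more routine point is to cite (rather than reprove) the statement that the trisection restricts to a genuine $v$-strand bridge splitting of $\partial F$; this is part of the relative bridge trisection setup of \cite{meier2020filling}.
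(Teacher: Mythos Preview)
Your overall strategy matches the paper's: rewrite the puncture count via the Euler-characteristic identity, reduce the lower bound to $v\ge 3$, and cite an explicit example for sharpness. There is, however, a genuine gap in your justification of $v\ge 3$. In the relative framework of \cite{meier2020filling} used here, the structure induced on $\partial B^4=S^3$ is an open book with disk pages $\partial_-Z_i$, and $\partial F$ sits in \emph{braid position} with respect to it, meeting each page transversely in $v$ points; it is \emph{not} a bridge splitting. Thus the invariant that bounds $v$ from below is the \emph{braid index} of the square knot, not its bridge number, and Schubert's additivity for bridge number is not the relevant input. For the square knot both invariants happen to equal $3$, so your numerical conclusion survives, but the argument should invoke the braid index (e.g.\ $3_1$ is a $2$-braid, the connected sum is a $3$-braid, and the square knot is not a $2$-braid since only torus knots are), exactly as the paper does.

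For the upper bound you are making life harder than necessary: there is no need to construct a new trisection or destabilize stray closed components. Meier already exhibits a $(b'{=}2,\;c_1{=}c_2{=}c_3{=}0,\;v{=}3)$ bridge trisection of this ribbon disk (Example~3.15 of \cite{meier2020filling}), reproduced in this paper as Figure~\ref{fig:slicedisk}, and the paper simply cites it.
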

\begin{proof}
 Since the square knot has braid index 3, we know that $b(F)\geq 3/2.$ Also,   $\chi(F)=c_1+c_2+c_3+v-b'$, which means that the minimum number of bridge arcs we can have is $b'\geq 2+\min c_1+\min c_2+\min c_3.$ Meier in \cite{meier2020filling}, produced a diagram showing that $\min c_i=0$ for $i=1,2,3$. Each bridge arc contributes two punctures to the bridge trisection disk, and therefore, the bridge number is at least $7/2$. The top half of Figure \ref{fig:slicedisk}, which also appeared in \cite{meier2020filling} shows that the bridge number is at most 7/2 and the equality follows.
\end{proof}

\begin{proposition}
Every minimal bridge trisection of the standard ribbon disk $F$ for \[T(2,2k-1)\# -T(2,2k-1)\] is irreducible.\label{prop:ribbon_irred}
\end{proposition}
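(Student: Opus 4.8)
The strategy is to assume, for contradiction, that $\mc T$ is a minimal bridge trisection of $F$ which is reducible. Since $F$ is connected, $\mc T$ cannot be a distant sum, so there is an essential curve $r\subset\Sigma$ bounding $c$-disks (or even spanning annuli) in all three tangles, and correspondingly $\mc T$ decomposes as a connected sum $\mc T=\mc T_1\#\mc T_2$ of bridge trisections of surfaces $F_1$ and $F_2$. Write $J=T(2,2k-1)$, so $\partial F=J\#-J$. The first step is to split into two cases according to whether the connected sum separates $\partial F$, i.e.\ whether it is an interior connected sum $F=F_1\# F_2$ or a boundary connected sum $F=F_1\natural F_2$, and to pin down the two pieces using Euler characteristics together with orientability of $F$. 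In the interior case exactly one summand, say $F_1$, is closed in $S^4$ (as $\partial F$ is connected) and $\partial F_2=J\#-J$; since $\chi(F_1)+\chi(F_2)=\chi(F)+2=3$, we get $F_1\cong S^2$ and $F_2\cong D^2$. In the boundary case both summands have nonempty boundary, $\partial F_1\#\partial F_2=J\#-J$, and $\chi(F_1)+\chi(F_2)=\chi(F)+1=2$ forces both $F_1$ and $F_2$ to be disks.

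I would dispose of the boundary case first. By Schubert's uniqueness of prime knot decomposition and primeness of torus knots, $\{\partial F_1,\partial F_2\}$ is either $\{\text{unknot},\,J\#-J\}$ or $\{J,-J\}$. If one boundary is the unknot, the corresponding summand is a disk with unknotted boundary, the connected sum is trivial, and $r$ is isotopic into a once-punctured disk of $\Sigma$ — contradicting that $r$ is essential. If $\{\partial F_1,\partial F_2\}=\{J,-J\}$, then (say) $F_1$ is a slice disk for $J=T(2,2k-1)$; but torus knots are not slice (their signatures are nonzero, equivalently their Alexander polynomials fail the Fox--Milnor condition), a contradiction. Hence the decomposition must be an interior connected sum.

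For the interior case the endgame is a bridge-number count. Setting $b_i=b(\mc T_i)$, a connected sum along a puncture satisfies $b(\mc T)=b_1+b_2-1$, and minimality of $\mc T$ gives $b(\mc T)=b(F)=7/2$ by Proposition \ref{prop:bridge_of_disk_minimal} (the same computation yields $b(F)=7/2$ for every $k$). Since $\partial F_2=J\#-J$ has braid index $3$, we have $b_2\ge b(F_2)\ge 3/2$, hence $b_1\le 3$. By the Meier--Zupan classification a closed bridge trisection with bridge number at most $3$ is either an unlink of unknotted surfaces or the unique trisection of the unknotted torus; as $F_1\cong S^2$ this forces $F_1$ to be the unknotted $2$-sphere. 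Consequently $F=F_1\# F_2=F_2$, so $\mc T_2$ is a bridge trisection of $F$ with $b_2\ge b(F)=7/2$. If $b_1\ge 2$, this gives $b(\mc T)=b_1+b_2-1\ge 9/2>7/2$, contradicting minimality; if $b_1=1$, then $\mc T_1$ is the $(1;1,1,1)$-trisection of $S^2$, whose single surviving puncture forces $r$ to bound a once-punctured disk in $\Sigma$ and hence to be inessential — again a contradiction. All cases being impossible, $\mc T$ is irreducible.

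The step requiring the most care is the first one: extracting, from the abstract reducibility of a \emph{relative} bridge trisection, the precise dichotomy (interior vs.\ boundary connected sum, and the role of even spanning annuli in the definition of the disk set) and aligning it with the conventions of \cite{blair2020kirby,meier2020filling}; the remaining ingredients (Euler characteristic arithmetic, non-sliceness and primeness of torus knots, braid-index additivity, the $\le 3$-bridge classification, and $b(F)=7/2$) are standard. As an alternative to the boundary case and to the input $b_1\le 3$, one could run a van Kampen argument in the spirit of Lemma \ref{lem:bridget2}: since $F$ is the standard ribbon disk of $J\#-J$, its complement group is the torus knot group $\pi_1(S^3\setminus J)$, which is not a nontrivial amalgamated product over a meridional $\mathbb Z$; the splitting $\pi_1(B^4\setminus\nu F)\cong\pi_1(S^4\setminus\nu F_1)\ast_{\mathbb Z}\pi_1(B^4\setminus\nu F_2)$ then has a $\mathbb Z$ factor, and either $F_1$ is unknotted (reducing to the bridge-number contradiction) or $F_2$ has infinite cyclic complement, the latter forcing $\Delta_{J\#-J}\doteq 1$, in contradiction with $\Delta_{J\#-J}=\Delta_J^2\ne 1$.
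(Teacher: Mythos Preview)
Your interior (cut-disk) case is essentially the paper's argument: a cut-reducing curve forces $F=F_1\# F_2$ with $F_1$ a closed surface of bridge number at most $3$; since $F$ is a disk, $F_1\cong S^2$, and by the Meier--Zupan classification $F_1$ is unknotted, so $\mc T_1$ is stabilized, contradicting minimality. That part is fine and matches the paper.

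The problem is your boundary-connected-sum case. You identify it with $r$ bounding even spanning annuli in all three tangles, but that is a misreading of the relative setup: by definition a spanning annulus has interior \emph{disjoint} from $T_i$, so the three annuli glue to a $2$-complex in the spine disjoint from $F\cap(\text{spine})$, and its thickening is a properly embedded $3$-ball in $B^4$ missing $F$. Hence even spanning annuli produce a \emph{distant-sum} decomposition (a disconnected $F$), not $F_1\natural F_2$. The paper therefore dismisses this case in one sentence (``$F$ is disconnected''), and disposes of the compressing-disk case just as quickly by observing that every minimal trisection of $F$ has $c_i=0$. No boundary-sum case ever arises from reducibility as defined here, so your entire second paragraph is superfluous.

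Worse, the argument you give for that phantom case has a genuine gap: when $\partial F_1$ is the unknot you assert that ``the connected sum is trivial, and $r$ is isotopic into a once-punctured disk of $\Sigma$.'' Neither follows. A smooth disk in $B^4$ with unknotted boundary is not known to be standard, and even if $F_1$ were standard there is no reason $\mc T_1$ is its minimal $(0,0,0,0;1)$-trisection, so $r$ need not bound a once-punctured disk. Since the case cannot actually occur, this gap does not affect the conclusion, but as written your proof relies on an unjustified step to dispatch it.
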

\begin{proof}
Suppose there is a simple closed curve $c$ on the bridge trisection disk $\Sigma$ bounding a compression $D_i$ in $(Z_i,T_i)$. Since $c_i=0$ for a minimal bridge trisection of $F$, the compression $D_i$ cannot be a compressing disk. Also, $D_i$ cannot be an even spanning annulus because if so, $F$ is disconnected. Therefore, the curve $c$ bounds a cut disk $D_i$ in $(Z_i,T_i)$. It follows that $F$ decomposes as a connected sum of two surfaces $F_1\# F_2$. Without loss of generality, $F_1$ is a closed surface admitting a bridge trisection with $b=2$ or $b=3.$ Since $F$ is a disk, the surface $F_1$ is a sphere. But a knotted sphere with bridge number 2 or 3 is stabilized. This means that the bridge number of $F$ is not minimal, which is a contradiction.
\end{proof}
\begin{example}
Let $F\subset B^4$ be the standard ribbon disk for the square knot. We consider the bridge trisection $\mc{T}$ of $F$ built in Example 3.15 of \cite{meier2020filling}. Each image in Figure \ref{fig_rel:EfficientDefiningPairs} depicts an efficient defining pair $(P_{ij}^i,P_{ij}^j)$. Recall that for the 7 times-punctured disk, we have that the distance $d(P_{ij}^i,P_{ij}^j) =2$. Here, a curve and its image under the A-move are drawn concurrently as two curves intersecting in two points. Each of the red, blue and green curves is the boundary of a regular neighborhood of a bridge shadow, and hence bounds a compressing disk on one side. 

Each purple curve in Figure \ref{fig_rel:EfficientDefiningPairs} bounds spanning annulus on both sides due to the following reason. On each side, it is easier to see that each purple curve bounds a disk $\Delta$ punctured twice by two vertical arcs $v_1,v_2$. Then, one verifies that it is possible to find a embedded arc $a$ on $\Sigma$ connecting the two punctures on $\Sigma$ associated to the $v_1$ and $v_2$. There is then a disk $a\times [-1,1]$ whose boundary is $v_1 \cup v_2 \cup a\times\{-1\} \cup a\times\{1\}$ that allows us to perform surgery to turn a twice punctured disk $\Delta$ into an annulus. After that, the boundaries of the annulus can be extended to lie precisely in the positive and the negative boundaries. 

Each yellow curve in Figure \ref{fig_rel:EfficientDefiningPairs} bounds a cut disk on two sides due to the following reasons. Each relative spanning tangle $(Z_i,T_i)\cup(Z_j,T_j)$ admits a system of cancelling disks, which are bridge disks $\lbrace D_1,\cdots, D_k\rbrace$ ($k=2$ or 4) such that 1) $D_i$ is on opposite side of $D_{i+1}$, 2) $D_i\cap D_{i+1}$ in precisely one point on the relative spanning tangle, and 3) $D_i\cap D_{j}=\emptyset$ for non-consecutive $i,j$. A yellow curve in Figure \ref{fig_rel:EfficientDefiningPairs} is either the boundary of a regular neighborhood of shadows of $D_1\cup D_2$ (cuts off a thrice-punctured disk from $\Sigma$) or the boundary of a regular neighborhood of shadows of $D_1\cup D_2\cup D_3\cup D_4$ (cuts off a 5 times-punctured disk from $\Sigma$). In either case, such a curve bounds a cut disk on two sides.

Figure \ref{fig_rel_red} depicts a path connecting $P_{12}^1$ to $P_{31}^1$ of length 4. Figure \ref{fig_rel_blue} depicts a path connecting $P_{12}^2$ to $P_{23}^2$ of length 3. Figure \ref{fig_rel_green} depicts a a path connecting $P_{31}^3$ to $P_{23}^3$ of length 9. By Proposition \ref{prop:bridge_of_disk_minimal} and Proposition \ref{prop:ribbon_irred}, any minimal bridge trisection of $F$ is irreducible. This allows us to apply Theorem \ref{thm:lower_relcase}, to conclude that $6\leq\mc{L}^*(F)\leq 4+3+9=16$.
\end{example}
\begin{figure}[ht!]
\centering
\includegraphics[width=7cm]{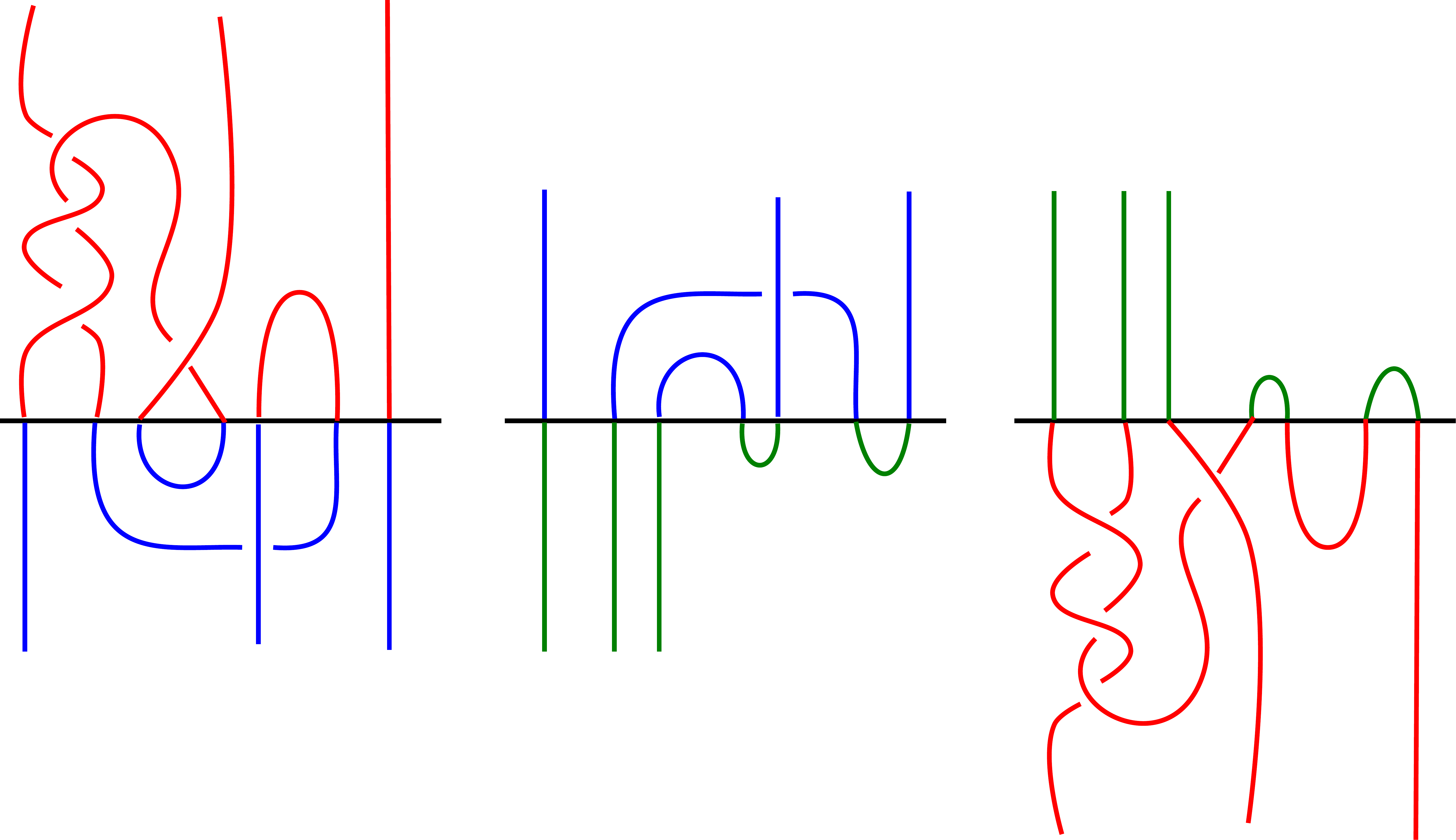}
\caption{The three spanning trivial tangles appearing in a bridge trisected slice disk for the square knot.}
\label{fig:slicedisk}
\end{figure}
%
%
\begin{figure}[ht!]
\centering
\includegraphics[width=15cm]{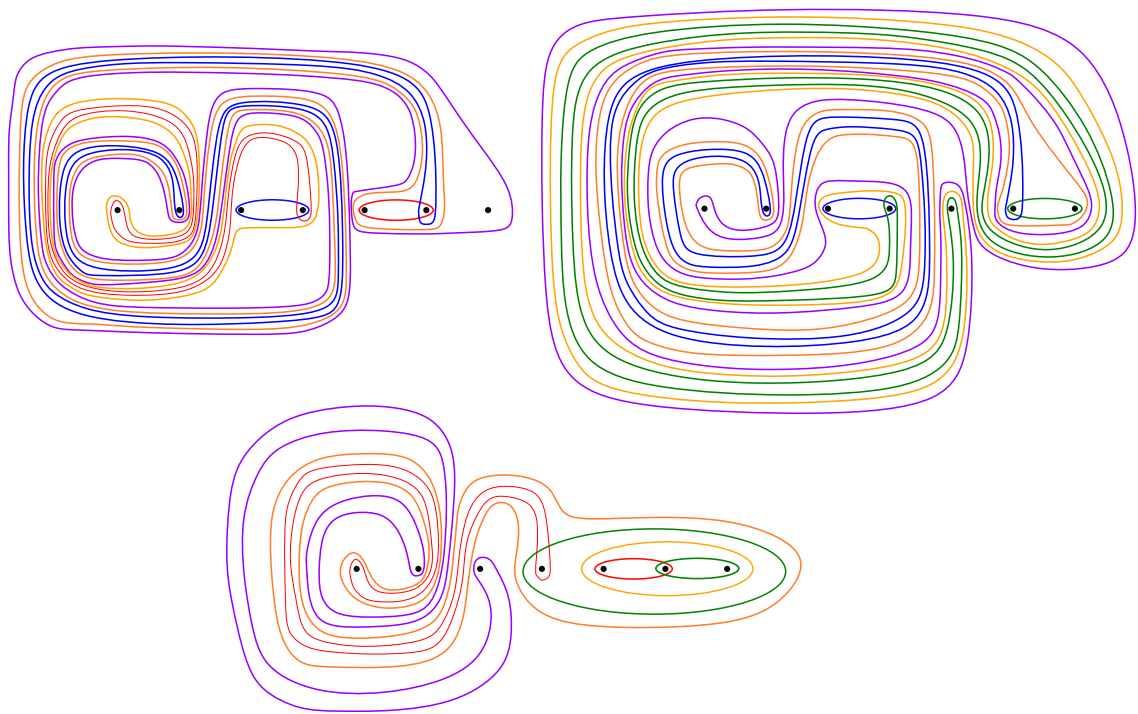}
\caption{Efficient defining pairs.}
\label{fig_rel:EfficientDefiningPairs}
\end{figure}
\begin{figure}[ht!]
\centering
\includegraphics[width=15cm]{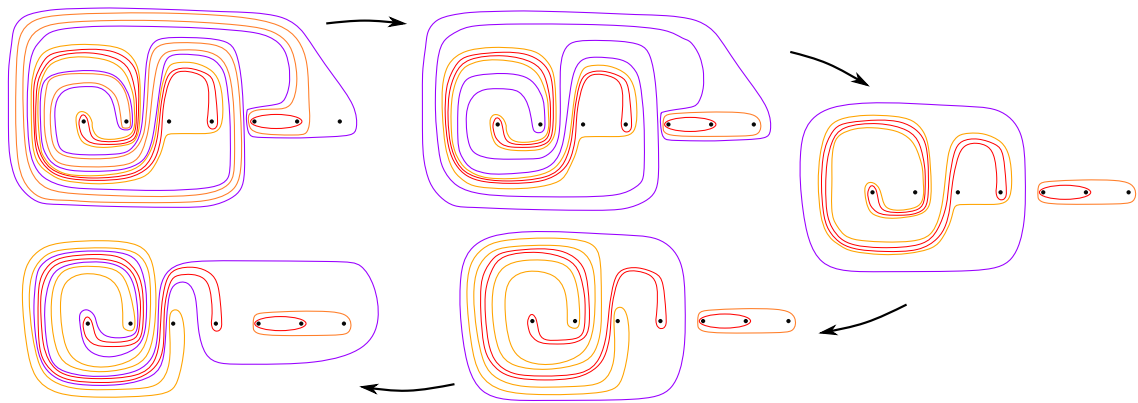}
\caption{A path connecting $P_{12}^1$ and $P_{31}^1$.}
\centering\label{fig_rel_red}
\end{figure}
\begin{figure}[ht!]
\centering
\includegraphics[width=13cm]{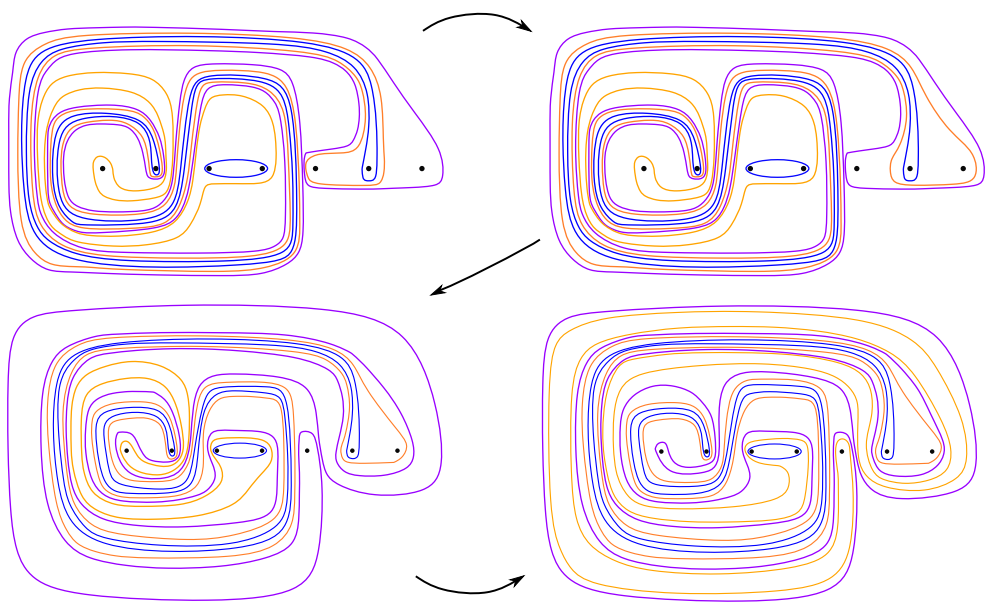}
\caption{A path connecting $P_{12}^2$ and $P_{23}^2$.}
\centering\label{fig_rel_blue}
\end{figure}
\begin{figure}[ht!]
\centering
\includegraphics[width=15cm]{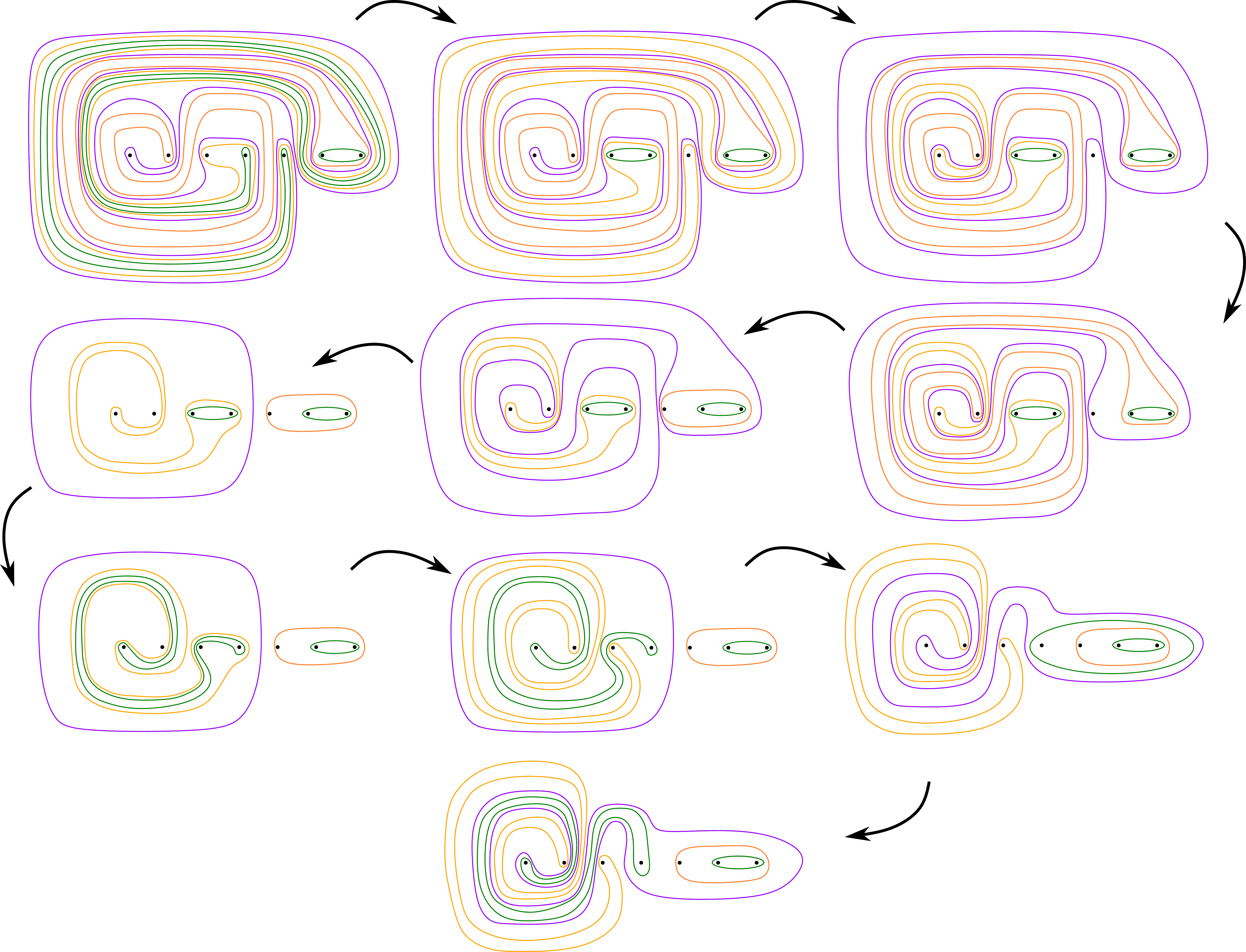}
\caption{A path connecting $P_{31}^3$ and $P_{23}^3$.}
\centering\label{fig_rel_green}
\end{figure}



\clearpage

\bibliographystyle{plain}
{ \small
\bibliography{main}
}

$\quad$ \\
Rom\'an Aranda, Binghamton University 
\hfill 
Suixin (Cindy) Zhang, Colby College\\
email: \texttt{jaranda@binghamton.edu} 
\hfill
email: \texttt{szhang22@colby.edu}\\
$\quad$ \\
Puttipong Pongtanapaisan, University of Saskatchewan\\
email: \texttt{puttipong@usask.ca}
\end{document}